\theoremstyle{plain}
\newtheorem{theo}{Theorem}[section]
\newtheorem{lemma}[theo]{Lemma}
\newtheorem{prop}[theo]{Proposition}
\newtheorem{cor}[theo]{Corollary}
\theoremstyle{definition}
\newtheorem{remark}[theo]{Remark}
\newtheorem{question}{Question}
\newtheorem{definition}[theo]{Definition}
\newtheorem{example}[theo]{Example}
\newtheorem{algorithm}[theo]{Algorithm}
\newcommand{\Sym}{\operatorname{Sym}}
\newtheorem*{lemma*}{Lemma}
\newcommand{\superwedge}{\mathlarger{\mathlarger{\wedge}}}
\title{Schur apolarity}
\author{Reynaldo Staffolani}
\date{}
\begin{document}
\maketitle

\begin{abstract}
Inspired by the classic apolarity theory of symmetric tensors, the aim of this paper is to introduce the Schur apolarity theory, i.e. an apolarity for any irreducible representation of the special linear group $SL(V)$. This allows to describe decompositions of structured tensors whose elementary elements are tensors that represent flags of the vector space $V$. The main result is the Schur apolarity lemma which is the analogous of the apolarity lemma of symmetric apolarity theory. Eventually we study the rank tensors of low border rank related to specific varieties giving rise also to simple algorithms.
\end{abstract}
\section*{Introduction} \setcounter{equation}{0}

Tensors are multilinear objects ubiquitous in the applications in different flavours, see e.g. \cite{allman2008phylogenetic, bernardi2012algebraic, bernardi2021high, comon2014tensors}. They can be seen as elements of $V_1 \otimes \dots \otimes V_d$, where any $V_i$ is a vector space of finite dimension over some field $\mathbb{K}$. Being provided with an additive structure, one particular interest is {\it tensor decomposition}, i.e. additive decompositions of tensors into sums of elementary objects, often referred to as tensors of {\it rank 1}. Throughout all this paper we assume that $\mathbb{K}$ is algebraically closed and of characteristic $0$.

\noindent {\it Structured} tensors are tensors with prescribed symmetries between the factors of the tensor product.  Even in this case we can talk about structured tensors of {\it structured rank} $1$ which are in some sense the most elementary tensors with that structure. 

\noindent Some examples are {\it symmetric} tensors $\Sym^d V$, i.e. tensors invariant under permutations of the factors, and {\it skew-symmetric} tensors $\superwedge^d V$, i.e. tensors invariant under permutations of the factors up to the sign of the permutation. In these instances the tensors of {\it structured rank} $1$ are determined by the underlying geometry. Indeed, since both $\Sym^d V$ and $\superwedge^d V$ are irreducible representations of the group $SL(V)$, one can consider the highest weight vector of both of them and its orbit in the projectivization under the group action. These orbits turn out to be projective varieties which are Veronese and Grassmann varieties respectively. The (skew-)symmetric tensors of (skew-)symmetric rank $1$ are the points of the Veronese (Grassmann) variety. In the symmetric case, due to the canonical identification of $\Sym^d V$ with the vector space of homogeneous polynomials of degree $d$ in $\dim V$ variables, the symmetric rank $1$ tensors are $d$ powers of linear forms $l^d$, with $l \in V$. In the skew-symmetric case, the tensors of skew-symmetric rank $1$ look like $v_1 \wedge \dots \wedge v_d$ for some $v_1,\dots,v_d \in V$. \smallskip

\noindent The irreducible representations of $SL(V)$ are known and usually described as {\it Schur modules}, as defined in \cite{fulton2013representation}. The respective minimal orbit inside their projectivization is in general a {\it Flag variety}, and tensors of structured rank $1$ in the general context will represent flags, possibly partial, of the vector space $V$. \smallskip

Coming back to (skew-)symmetric tensors, clearly one would like to compute the (skew-)symmetric rank of any (skew-)symmetric tensor. In both cases but chronologically first in the symmetric, see \cite{iarrobino1999power}, and after for the other one, see \cite{arrondo2021skew}, {\it apolarity theories} have been developed to compute the ranks of such tensors. Even though they may seem different, these two theories share a greatest common idea which is the {\it evaluation}. Starting from the usual contraction $V^* \times V \longrightarrow \mathbb{K}$, one gets respectively the maps

$$ \Sym^d V \otimes \Sym^e V^* \longrightarrow \Sym^{d-e} V\quad \text{and} \quad \superwedge^d V \otimes \superwedge^e V^* \longrightarrow \superwedge^{d-e} V$$
called {\it apolarity actions}. Consequently, given any (skew-)symmetric tensor, one can compute its annihilator, i.e. any element in the dual world, symmetric or skew-symmetric respectively, that kills the given tensor in any degree via the suitable apolarity action. Such annihilator turns out to be an ideal in the symmetric or exterior algebra of $V^*$ respectively. In both cases, given a tensor of (skew-)symmetric rank $1$, there is attached to it an {\it ideal} generated in the respective symmetric or exterior algebra. The ideal of multiple tensors of rank $1$ is just the intersection of the respective ideals in both cases. The key result in both theories is the {\it apolarity lemma} which has the following simple statement. Finding a decomposition of a (skew-)symmetric tensor into a sum of rank $1$ (skew-)symmetric tensors is equivalent to find the inclusion of the ideal of the rank $1$ (skew-)symmetric tensors involved in the decomposition inside the annihilator of the (skew-)symmetric tensor we would like to decompose. It follows this second question. Remark that if a (skew-)symmetric tensor admits more than one decomposition, the apolarity lemma implies that the ideals associated to all such decompositions are contained in the annihilator.

\begin{question}
Is it possible to define an apolarity theory for any other irreducible representation of $SL(V)$?
\end{question}

The main motivation of this document is to answer this question. We will present a suitable apolarity action which will be called {\it Schur apolarity action}. If we denote with $\mathbb{S}_{\lambda} V$ the Schur module determined by the partition $\lambda$, it is the map

$$ \mathbb{S}_{\lambda} V \otimes \mathbb{S}_{\mu} V^* \longrightarrow \mathbb{S}_{\lambda / \mu} V $$
where $\mathbb{S}_{\lambda / \mu} V$ is called {\it skew Schur module}, cf. Definition \ref{skewSchurModule} of this article. Several examples as well as an analogous {\it Schur apolarity lemma} are provided. \smallskip

The content of this article is original but it is worth noting the strong connection of the Schur apolarity with the {\it non-abelian apolarity}. Introduced for the first time in \cite{landsberg2013equations} to seek new sets of equations of secant varieties of Veronese varieties, it is an apolarity which implements vector bundles techniques for any projective variety $X$. More specifically, the non-abelian apolarity action has always in its codomain an irreducible representation. On the contrary in the Schur apolarity case, the skew Schur module is in general reducible. Hence we may think of the Schur apolarity as a slight generalization of the non-abelian one. Another apolarity theory which is worth noting is described in \cite{galkazka2016multigraded} for toric varieties, that features also the use of Cox rings. Formerly the special case in which the toric variety is a Segre-Veronese variety has been introduced in \cite{catalisano2002ranks} as {\it multigraded apolarity}.

\smallskip

The article is organized as follows. In Section \ref{primasez} we recall basic definitions needed to develop the theory. In Section \ref{secondasez} we give a description of the maps $\mathbb{S}_{\lambda} V \otimes \mathbb{S}_{\mu} V \longrightarrow \mathbb{S}_{\nu}V$ which turn out to be useful in the Schur apolarity setting. In Section \ref{terzasez} we introduce the Schur apolarity action. In Section \ref{quartasez} we state and prove the Schur apolarity lemma. In Section \ref{quintasez} we give a description of the ranks of certain linear maps induced by the apolarity action when a structured tensor has structured rank $1$. In Section \ref{sestasez} we investigate the secant variety of the Flag variety of $\mathbb{C}^1 \subset \mathbb{C}^k \subset \mathbb{C}^n$ together with an algorithm which distinguishes tensors of the secant variety with different ranks.
\bigskip

\section{Notation and basic definitions} \label{primasez} \setcounter{equation}{0} \medskip

Let $V$ be a vector space over a field $\mathbb{K}$ of dimension $\operatorname{dim}(V) = n < \infty$. In the following $\mathbb{K}$ is always intended algebraically closed and of characteristic $0$. We adopt the following notation for the algebras:
\begin{enumerate}
\item[$\bullet$] $\Sym^{\bullet}{V} = \bigoplus_{k\geq 0} \Sym^k V$ is the {\it symmetric} algebra, i.e. the algebra of polynomials in $n$ variables;
\item[$\bullet$] $\superwedge^{\bullet} V = \bigoplus_{k \geq 0} \superwedge^k V$ is the {\it exterior} algebra.
\end{enumerate}

\noindent We will indicate with $V^*$ the dual vector space of $V$, i.e. the vector space of linear functional defined on $V$. Remark that this space $V^*$ defines an action $\operatorname{tr} \colon V \times V^* \to \mathbb{K}$  on $V$ called {\it trace}. 

\noindent The group $SL(V)$ is the group of automorphisms of $V$ inducing the identity on the space $\superwedge^n V$. Such a group defines a transitive action on $V$
\begin{align*} SL(V) &\times V\ \to\ V \\ (&g , v) \longmapsto g(v).  \end{align*}
This action can be extended to $V^{\otimes d}$ just setting 

$$g \cdot (v_1 \otimes \dots \otimes v_d) = g(v_1) \otimes \dots \otimes g(v_d).$$

\noindent The symmetric group $\mathfrak{S}_d$ acts on $V^{\otimes d}$ by permuting the factors of the tensor product. It can be easily seen that the actions of $SL(V)$ and $\mathfrak{S}_d$ commutes. These standard facts and their consequences in terms of representations are classical, (cf. \cite{fulton2013representation}).
\medskip

In order to develop an apolarity theory for any irreducible representation of the special linear group we need to set some notation and basic definitions. The facts we are going to recall are borrowed from \cite{fulton2013representation}, \cite{fulton1997young}, \cite{sturmfels2008algorithms} and \cite{procesi2007lie}. \smallskip

In the following we regard the irreducible representations of $SL(n)$ as Schur modules. They are denoted with $\mathbb{S}_{\lambda}V$ where $\lambda = (\lambda_1,\dots,\lambda_k)$, with $k < n$, is a partition. The number $k$ is the {\it length} of the partition. We denote with $|\lambda| := \lambda_1 + \dots + \lambda_k$ the number partitioned by $\lambda$. Sometimes we may also write $\lambda \vdash |\lambda|$ to underline that $\lambda$ is a partition of $|\lambda|$. We follow \cite{fulton2013representation} for a construction of this representations. 

\noindent Fix a partition $\lambda$ of the integer $d$ whose length is less than $\dim(V)$. We can draw its {\it Young diagram} placing $\lambda_1$ boxes in a row, $\lambda_2$ boxes below it and so on, where all the rows are left justified. A filling of positive integers will turn the diagram into a {\it tableau of shape} $\lambda$. A tableau of shape $\lambda$ is said {\it semistandard} if reading from left to right the sequences of integers are weakly increasing, while from top to bottom the sequences are strictly increasing. We will abbreviate it with just {\it sstd} tableau. A sstd tableau of shape $\lambda$ is called {\it standard} if also the row sequences are strictly increasing and there are no repetitions. In this case we will simply write {\it std} tableau. Let $T$ be a std tableau of shape $\lambda$ with entries in $\{1,\dots,d\}$. The {\it Young symmetrizer associated to $\lambda$ and $T$} is the endomorphism $c_{\lambda}^T$ of $V^{\otimes d}$ which sends the tensor $v_1 \otimes \dots \otimes v_d$ to

\begin{equation} \label{YoungSymm} \sum_{\tau \in C_{\lambda}} \sum_{\sigma \in R_{\lambda}} \operatorname{sign}(\tau) v_{\tau(\sigma(1))} \otimes \dots \otimes v_{\tau(\sigma(d))} \end{equation}

\noindent where $C_{\lambda}$, $R_{\lambda}$ respectively, is the subgroup of $\mathfrak{S}_d$ of permutations which fix the content of every column, of every row respectively, of $T$. The image of the Young symmetrizer 

$$\mathbb{S}_{\lambda}^T V := c_{\lambda}^T (V^{\otimes d}) $$

\noindent is called {\it Schur module} associated to $\lambda$. It is easy to see that any two Schur modules that are images of $c_{\lambda}^T$ and $c_{\lambda}^{T'}$, with $T$ and $T'$ two different std tableaux of shape $\lambda$, are isomorphic. Hence we drop the $T$ in the notation and we write only $\mathbb{S}_{\lambda} V$. It can be proved that Schur modules are irreducible representations of $SL(n)$ via the induced action of the group, and they are completely determined by the partition $\lambda$, cf. \cite[p. 77]{fulton2013representation}. From the construction of such modules we have the inclusion

$$ \mathbb{S}_{\lambda} V \subset \superwedge^{\lambda_1'} V \otimes \dots \otimes \superwedge^{\lambda_h'} V =: \superwedge_{\lambda'} V. $$

\noindent where $\lambda'$ is the {\it conjugate} partition to $\lambda$, i.e. obtained transposing the diagram of $\lambda$ as if it were a matrix. \smallskip

We would like to give a little more explicit insight of the elements of such modules since we will need it in the following. We recall briefly the construction given in \cite{sturmfels2008algorithms} of a basis of these spaces in terms of sstd tableaux. The Schur-Weyl duality, see \cite[Theorem 6.4.5.2, p. 148]{Landsberg2011TensorsGA}, gives the isomorphism

\begin{equation} \label{SWduality} V^{\otimes d} \simeq \bigoplus_{\lambda\ \vdash\ d} \mathbb{S}_{\lambda} V^{\oplus m_{\lambda}} \end{equation}

\noindent where $m_{\lambda}$ is the number of std tableaux of shape $\lambda$ with entries in $\{1,\dots,d\}$. For example $m_{(2,1)} = 2$ since we have the two std tableaux

\begin{equation} \label{EsT1} T_1 = \begin{ytableau} 1 & 2 \\ 3 \end{ytableau}\ , \quad T_2 =  \begin{ytableau} 1 & 3 \\ 2 \end{ytableau}\ . \end{equation}

\noindent Now fix a module $\mathbb{S}_{\lambda} V$ together with a std tableaux $T$ of shape $\lambda$, where $\lambda \vdash d$. Fix also a basis $v_1,\dots,v_n$ of $V$ and consider a sstd tableaux $S$ of shape $\lambda$. The pair $(T,S)$, regarded in \cite{sturmfels2008algorithms} as {\it bitableau}, describes an element of $\mathbb{S}_{\lambda}V$ in the following way. At first build the element 
$$v_{(T,S)} := v_{i_1} \otimes \dots \otimes v_{i_d},$$
where $v_{i_j} = v_k$ if there is a $k$ in the box of $S$ corresponding to the box of $T$ in which a $j$ appears. We drop the $T$ in $v_{(T,S)}$ if the choice of $T$ is clear. After that, one applies the Young symmetrizer $c_{\lambda}^T$ to the element $v_{(T,S)}$. For example, if $\lambda =(2,1)$, consider the std tableau $T_1$ in \eqref{EsT1} and the sstd tableau

\begin{center} S = \begin{ytableau} 1 & 1 \\ 2 \end{ytableau} . \end{center}

\noindent Then $c_{\lambda}(v_S) = c_{\lambda}(v_1 \otimes v_1 \otimes v_2) = 2 \cdot v_1 \wedge v_2 \otimes v_1$, which we may represent pictorially as

$$ c_{\lambda}(v_S) = 2 \cdot \left ( \begin{ytableau} 1 & 1 \\ 2 \end{ytableau} -  \begin{ytableau} 2 & 1 \\ 1 \end{ytableau} \right )$$

\noindent where in this particular instance the tableaux represent tensor products of vectors with the order prescribed by $T_1$. We may use sometimes this notation.

\noindent  As a consequence of \cite[Theorem 4.1.12, p. 142]{sturmfels2008algorithms}, one has the following result.

\begin{prop} \label{BaseSchur}
The set 
$$\{ c_{\lambda}^T (v_{(T,S)})\ :\ S\ \text{is a sstd tableau of shape}\ \lambda \}$$
 is a basis of the module $\mathbb{S}_{\lambda} V$. 
\end{prop}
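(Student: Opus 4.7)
The plan is to deduce the proposition directly from the straightening theorem of \cite[Theorem 4.1.12, p.~142]{sturmfels2008algorithms} combined with a dimension count. The straightening result provides the combinatorial input: any element $v_{(T,S)}$ indexed by an arbitrary filling $S$ (not necessarily semistandard) can, after applying $c_\lambda^T$, be rewritten as a $\mathbb{Z}$-linear combination of $c_\lambda^T(v_{(T,S')})$ with $S'$ ranging over semistandard tableaux of shape $\lambda$.

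First I would prove spanning. Since $V^{\otimes d}$ is spanned by the decomposable tensors $v_{i_1}\otimes\dots\otimes v_{i_d}$ and every such tensor is of the form $v_{(T,S)}$ for the filling $S$ that places $i_j$ in the box of $T$ containing $j$, the image $\mathbb{S}_\lambda^T V=c_\lambda^T(V^{\otimes d})$ is spanned by all $c_\lambda^T(v_{(T,S)})$. The straightening law then reduces the indexing set to semistandard $S$: fillings with a repeated entry in some column are annihilated by the column-antisymmetrizer built into $c_\lambda^T$; fillings whose rows are not weakly increasing are identified with rearranged fillings through the row symmetrizer; and the Garnir relations take care of the remaining non-semistandard configurations in the columns.

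For linear independence I would invoke the Weyl dimension formula for $SL(V)$-irreducibles, equivalently the combinatorial identity $\dim \mathbb{S}_\lambda V = s_\lambda(1,\dots,n)$ which equals the number of semistandard tableaux of shape $\lambda$ with entries in $\{1,\dots,n\}$. Since our spanning set has exactly this cardinality, matching the dimension of $\mathbb{S}_\lambda V$, it must be a basis.

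The main potential obstacle is the straightening step itself, but this is precisely the content of the cited theorem of Sturmfels, so no reproof is required. The only real work is the translation: one has to check that the bitableau formalism of \cite{sturmfels2008algorithms}, which is phrased in terms of determinantal products inside a polynomial/coordinate ring, corresponds under the Schur--Weyl framework to applying the Young symmetrizer $c_\lambda^T$ to the tensor $v_{(T,S)}$. This identification is routine because $\mathbb{S}_\lambda V$ sits inside $\superwedge_{\lambda'} V$ via the antisymmetrization along columns encoded by $c_\lambda^T$, which is exactly the data that bitableaux encode.
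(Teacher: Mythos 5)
Your proposal is correct and follows essentially the same route as the paper, which gives no independent argument but simply derives the proposition as a consequence of the straightening theorem \cite[Theorem 4.1.12, p.~142]{sturmfels2008algorithms}. The extra detail you supply (spanning via straightening, linear independence via the count of sstd tableaux matching $\dim \mathbb{S}_{\lambda}V$) is the standard way to unpack that citation and introduces no gap.
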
 

\noindent As already told, choosing different std tableaux of the same shape give rise to isomorphic Schur modules. The only difference between them is the way they embed in $V^{\otimes |\lambda|}$. Let us see an example.

\begin{example}
Let $\lambda = (2,1)$ and consider the std tableau

$$T_1 = \begin{ytableau} 1 & 2 \\ 3 \end{ytableau}\ ,\ T_2 = \begin{ytableau} 1 & 3 \\ 2 \end{ytableau}$$

\noindent and the sstd tableau

$$S = \begin{ytableau} 1 & 1 \\ 2 \end{ytableau}\ .$$

\noindent We get that 

$$ v_{(T_1,S)} = v_1 \otimes v_1 \otimes v_2,\ v_{(T_2,S)} = v_1 \otimes v_2 \otimes v_1. $$

\noindent Applying the respective Young symmetrizers we get that 

$$ c_{\lambda}^{T_1} (v_{(T_1,S)}) = 2 \cdot (v_1 \otimes v_1 \otimes v_2 - v_2 \otimes v_1 \otimes v_1) = 2 \cdot (v_1 \wedge v_2) \otimes v_1, $$
$$ c_{\lambda}^{T_2} (v_{(T_2,S)}) = 2 \cdot (v_1 \otimes v_2 \otimes v_1 - v_2 \otimes v_1 \otimes v_1) = 2 \cdot (v_1 \wedge v_2) \otimes v_1. $$
\end{example}

\noindent It is clear that we get the same element of $\superwedge^2 V \otimes V$. However it is clear that it embeds in a different way in $V^{\otimes 3}$ under the specific choice of the std tableau of shape $\lambda$. Since we are interested only in the elements of $\superwedge^2 V \otimes V$, we will ignore the way it embeds in $V^{\otimes d}$. For this reason we reduce to work in the vector space

$$ \mathbb{S}^{\bullet} V := \bigoplus_{\lambda} \mathbb{S}_{\lambda} V$$
where roughly every Schur module appears exactly once for each partition $\lambda$. One can see that such a space can also be obtained as the quotient

$$ \mathbb{S}^{\bullet} V = \Sym^{\bullet} \left (\superwedge^{n-1} V \oplus \superwedge^{n-1} V \oplus \dots \oplus \superwedge^1 V \right)/I^{\bullet}$$

\noindent where $I^{\bullet}$ is the two-sided ideal generated by the elements

\begin{align} \begin{split} \label{PluckRel}
(v_1 \wedge & \dots \wedge v_p) \cdot (w_1 \wedge \dots \wedge w_q) \\
& - \sum_{i=1}^p (v_1 \wedge \dots \wedge v_{i-1} \wedge w_1 \wedge v_{i+1} \wedge \dots \wedge v_p) \cdot (v_i \wedge w_2 \wedge \dots \wedge w_q)
\end{split}
\end{align}

\noindent called {\it Pl\"ucker relations}, for all $p \geq q \geq 0$. Remark that the elements \eqref{PluckRel} are the equations which define Flag varieties in general as incidence varieties inside products of Grassmann varieties. See \cite{fulton2013representation, towber1977two, towber1979young} for more details. Let us highlight that we are not going to use the natural symmetric product that $\mathbb{S}^{\bullet} V$ has as a quotient of a commutative symmetric algebra.

\noindent Eventually, since we are dealing with only one copy of $\mathbb{S}_{\lambda}V$ for any partition $\lambda$, to ease the construction of the theory we will imagine to build every Schur module using a fixed std tableau. If $\lambda \vdash d$, this one will be given by filling the diagram of $\lambda$ from top to bottom, starting from the first column, with the integers $1,\dots,d$. For instance, if $\lambda = (3,2,1)$, then the fixed tableau will be

\begin{center} \begin{ytableau} 1 & 4 & 6 \\ 2 & 5 \\ 3  \end{ytableau} . \end{center}

\smallskip

\section{Toward Schur apolarity} \label{secondasez} \setcounter{equation}{0} \medskip

In the following we will introduce the apolarity theory. For this purpose we construct an action called {\it Schur apolarity action}. \smallskip

In order to develop our theory we need to use multiplication maps $\mathcal{M}^{\lambda,\mu}_{\nu} : \mathbb{S}_{\lambda} V^* \otimes \mathbb{S}_{\mu} V^* \longrightarrow \mathbb{S}_{\nu} V^*$. Since $SL(n)$ is reductive, every representation is completely reducible and in this particular instance the behaviour of the tensor product of any two irreducible representations is well-known and computable via the Littlewood-Richardson rule which we recall. Given two Schur modules we have

\begin{equation} \label{LR} \mathbb{S}_{\lambda} V^* \otimes \mathbb{S}_{\mu} V^* \simeq \bigoplus_{\nu} N^{\lambda,\mu}_{\nu} \mathbb{S}_{\nu} V^* \end{equation}

\noindent where the coefficients $N^{\lambda,\mu}_{\nu} = N^{\mu,\lambda}_{\nu} \geq 0$ are called {\it Littlewood-Richardson coefficients}. Sufficient conditions to get $N^{\lambda,\mu}_{\nu} = 0$ are either $|\nu| \neq |\lambda| + |\mu|$ or that the diagram of either $\lambda$ or $\nu$ does not fit in the diagram of $\nu$. Therefore as soon as $N^{\lambda,\mu}_{\nu} \neq 0$, the map $\mathcal{M}^{\lambda,\mu}_{\nu} : \mathbb{S}_{\lambda} V^* \otimes \mathbb{S}_{\mu} V^* \longrightarrow \mathbb{S}_{\nu} V^*$ is non trivial and it will be a projection onto one of the addends $\mathbb{S}_{\nu} V^*$ appearing in the decomposition of $\mathbb{S}_{\lambda} V^* \otimes \mathbb{S}_{\mu} V^*$ into sum of irreducible representations as described by \eqref{LR}. Remark also that via \eqref{LR} it follows that the vector space of such equivariant morphisms has dimension $N^{\lambda,\mu}_{\nu}$. \smallskip

Before proceeding we recall some basic combinatorial facts.

\begin{definition} \label{skewtab}
\noindent Two partitions $\nu \vdash d$ and $\lambda \vdash e$, with $0 \leq e \leq d$, are such that $\lambda \subset \nu$ if $\lambda_i \leq \mu_i$ for all $i$, possibly setting some $\lambda_i$ equal to $0$. In this case $\lambda$ is a {\it subdiagram} of  $\nu$. The {\it skew Young diagram} $\nu / \lambda =(\nu_1 - \lambda_1,\dots,\nu_k - \lambda_k)$ is the diagram of $\nu$ with the diagram of $\lambda$ removed in the top left corner. A {\it skew Young tableau} $T$ of shape $\nu / \lambda$ is the diagram of $\nu / \lambda$ with a filling. The definitions of sstd and std  tableau apply also in this context.
\end{definition}

\noindent For example if $\nu = (3,2,1)$ and $\lambda = (2)$, the skew diagram $\nu / \lambda$ together with a sstd skew tableau of shape $\nu / \lambda$ is

\begin{center} \begin{ytableau} *(gray) & *(gray) & 1 \\ 1 & 2 \\ 3 \end{ytableau} . \end{center}

\noindent Skew-tableaux can be seen as a generalization of the regular tableaux by setting $\lambda = (0)$ in Definition \ref{skewtab}. 

\begin{definition} \label{skewSchurModule} Fix a std skew tableau of shape $\nu / \lambda$. One can define analogously to the formula \eqref{YoungSymm} the Young symmetrizer $c_{\nu / \lambda} : V^{\otimes |\nu|-|\lambda|} \longrightarrow V^{\otimes |\nu|-|\lambda|}$. The {\it skew Schur module} $\mathbb{S}_{\nu / \lambda} V$ is the image of $c_{\nu / \lambda}$. 
\end{definition}

Clearly also in this case two skew Schur modules determined by two different skew std tableaux of the same shape are isomorphic. Moreover they are still representations of $SL(n)$ but in general they may be reducible. Indeed it holds

\begin{equation} \label{SKEW} \mathbb{S}_{\nu / \lambda} V \cong \bigoplus_{\mu} N^{\lambda, \mu}_{\nu} \mathbb{S}_{\mu} V \end{equation}

\noindent where the coefficients $N^{\mu, \nu}_{\lambda}$ are the same appearing in \eqref{LR}. For more details see \cite[p. 83]{fulton2013representation}. Also in this case we assume that such modules are built using a std skew tableau of shape $\nu / \lambda$ filled with the integers from $1$ to $|\nu| - |\lambda|$ from top to bottom, starting from the first column.
\smallskip

\begin{definition} 
Let $\nu / \lambda$ be any skew partition and consider a sstd skew tableau $T$ of shape $\nu / \lambda$. The {\it word associated to} $T$ is the string of integers obtained from $T$ reading its entries from left to right, starting from the bottom row. The obtained word $w_1 \dots w_k$ is called either {\it Yamanouchi word} or {\it reverse lattice word} if for any $s$ from $0$ to $k-1$, the sequence $w_k w_{k-1} \dots w_{k-s}$ contains the integer $i+1$ at most many times it contains the integer $i$. For short this words will be denominated $Y${\it -words}. The {\it content} of $T$ is the sequence of integers $\mu = (\mu_1,\dots,\mu_n)$ where $\mu_i$ is the number of $i$'s in $T$. Note that this may be not a partition.
\end{definition}

\noindent For example, given

\begin{center} $T_1$ = \begin{ytableau} *(gray) & *(gray) & 1 \\ 1 & 2 \\ 3 \end{ytableau} , \quad $T_2$ = \begin{ytableau} *(gray) & *(gray) & 1 \\ 1 & 3 \\ 2 \end{ytableau} \end{center}

\noindent their associated words are $w_{T_1} = 3121$ and $w_{T_2} = 2131$ respectively. Remark that only the first one is a Y-word since in $w_{T_2}$ the subsequence $13$ has the integer $3$ appearing more times than the integer $2$. In both cases the content is $(2,1,1)$.

\begin{definition}
Let  $\lambda$ and $\nu$ be two partitions such that $\lambda \subset \nu$ and consider a skew sstd tableau $T$ of shape $\nu / \lambda$. The tableau $T$ is a {\it Littlewood-Richardson skew tableau} if its associated word is a Y-word.
\end{definition}

\begin{prop}[Prop. 3, p. 64, \cite{fulton1997young}]
Let $\mu$, $\lambda$ and $\nu$ such that $\mu, \lambda \subset \nu$ and $|\mu| + |\lambda| = |\nu|$, and consider the skew diagram $\nu / \lambda$. The number of Littlewood-Richardson skew tableau of shape $\nu / \lambda$ and content $\mu$ is exactly $N^{\lambda,\mu}_{\nu}$.
\end{prop}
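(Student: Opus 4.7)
The plan is to reduce the statement to a symmetric-function identity and then establish it via jeu de taquin rectification.

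First, take characters of both sides of the isomorphism \eqref{SKEW}. On the left the character of $\mathbb{S}_{\nu/\lambda}V$ is the skew Schur polynomial $s_{\nu/\lambda}$ in the eigenvalues $x_1,\dots,x_n$ of a diagonal torus element, and on the right it is $\sum_{\mu} N^{\lambda,\mu}_\nu s_\mu$. Hence
$$s_{\nu/\lambda} \;=\; \sum_{\mu} N^{\lambda,\mu}_\nu\, s_\mu.$$
So it suffices to prove the purely combinatorial statement: the coefficient of $s_\mu$ in the Schur expansion of $s_{\nu/\lambda}$ equals the number of Littlewood-Richardson skew tableaux of shape $\nu/\lambda$ and content $\mu$.

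Next, use the classical formula $s_{\nu/\lambda} = \sum_{T} x^T$, the sum ranging over sstd skew tableaux of shape $\nu/\lambda$, and partition these tableaux by their jeu de taquin rectification $\operatorname{rect}(T)$, which is a sstd tableau of some straight shape. Since rectification preserves content, the weight $x^T$ is unchanged. The key claim is that the number of $T$ with $\operatorname{rect}(T)=U$ depends only on the shape $\mu$ of $U$, not on $U$ itself; call this number $c^{\nu}_{\lambda,\mu}$. Granting this, the expansion reorganises as $s_{\nu/\lambda} = \sum_\mu c^{\nu}_{\lambda,\mu}\, s_\mu$, whence $N^{\lambda,\mu}_\nu = c^{\nu}_{\lambda,\mu}$ by comparison with the character identity.

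To finish, identify $c^{\nu}_{\lambda,\mu}$ with the LR count by choosing the superstandard tableau $U_\mu$ whose $i$-th row consists entirely of $i$'s. A direct verification shows that $\operatorname{rect}(T)=U_\mu$ if and only if the reverse reading word of $T$ is a Y-word of content $\mu$, i.e.\ exactly when $T$ is a Littlewood-Richardson skew tableau of content $\mu$.

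The main obstacle is the shape-only dependence of the rectification fibre. This rests on Sch\"utzenberger's theorem that two sstd tableaux are Knuth equivalent precisely when they have the same rectification, combined with the fact that jeu de taquin slides realise Knuth equivalence without altering the underlying skew shape $\nu/\lambda$. Once this machinery is in place, the proposition follows as in \cite[p.\ 64]{fulton1997young}.
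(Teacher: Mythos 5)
The paper gives no proof of this proposition; it is quoted verbatim from \cite[Prop.~3, p.~64]{fulton1997young}, and your argument is precisely the standard jeu de taquin proof given there (character identity for $s_{\nu/\lambda}$, shape-only dependence of the rectification fibres, and identification of the fibre over the superstandard tableau $U_\mu$ with the Littlewood--Richardson tableaux of content $\mu$). Your outline is correct and takes essentially the same route as the cited source.
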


\begin{remark}
The set of std tableau of shape $\lambda$ with entries in $\{1,\dots,|\lambda|\}$ is in $1$ to $1$ correspondence with the set of Y-words of length $n$ and {\it content} $\lambda$, i.e. with $\lambda_i$ times the integer $i$.  
\end{remark}

Indeed we can define two functions $\alpha$ and $\beta$

$$ \left \{ \text{std tableaux of shape}\ \lambda\ \right \} \xrightarrow{\quad\alpha\quad} \left \{ \text{Y-words of length}\ n\ \text{and content}\ \lambda \right \} $$
and 

$$ \left \{ \text{std tableaux of shape}\ \lambda\ \right \} \xleftarrow{\quad\beta\quad} \left \{ \text{Y-words of length}\ n\ \text{and content}\ \lambda \right \} $$
that are inverses each other. \smallskip

\noindent Let $T$ be a std tableau of shape $\lambda$ with entries in $\{1,\dots,|\lambda|\}$. Remark that each entry $l \in \{1,\dots,|\lambda|\}$ appears in a certain position $(i,j)$, i.e. in the box in the $i$-th row and $j$-th column. Then consider the sequence $(a_1,\dots,a_{|\lambda|})$, where we set $a_l = i$ if $l$ appears in the position $(i,j)$. Roughly, starting with the smallest entry in $\{1,\dots,|\lambda|\}$ record the row in which it appears in $T$. We define $\alpha(T)$ to be the reversed sequence 

$$\alpha(T):= \operatorname{rev}(a_1,\dots,a_{|\lambda|}) := (a_{|\lambda|},\dots,a_1),$$
where we consider $\operatorname{rev}$ as the involution that acts on the set of words reversing them. It is easy to see that $\alpha(T)$ is a Y-word since the sequences of integers are determined by the entries of $T$. \smallskip

\noindent Let $\underline{a}$ be a Y-word of content $\lambda$, so that $\lambda_i$ is the number of times in which an integer $i$ appears in $\underline{a}$. Consider its reversed sequence $\operatorname{rev}(\underline{a}) = (a_1,\dots,a_{|\lambda|})$. We define the std tableau $\beta(\underline{a})$ of shape $\lambda$ in the following way. For any integer $i \in \{1,\dots,k\}$, where $k= l(\lambda)$, consider the subsequence 

$$\operatorname{rev}(\underline{a})_i := (a_{k_1},\dots,a_{k_{\lambda_i}})$$
given by all $i$'s with respect to the order with which they appear in $\operatorname{rev}(\underline{a})$. Then we set the $(i,j)$-th entry of $\beta(\underline{a})$ to be equal to $k_j$ appearing in the subsequence $\operatorname{inv}(\underline{a})_i$. Even in this case it is clear that the image will be a std tableau of shape $\lambda$.   

\noindent For more details we refer to \cite[Definition IV.1.3, p. 252]{akin1982schur}. 

\begin{example}
Consider $\lambda = (3,1)$ and let $T$ be the tableau

$$ T = \begin{ytableau} 1 & 2 & 4 \\ 3 \end{ytableau} .$$
We apply $\alpha$ to $T$. We get at first $(a_1,\dots,a_4) = (1,1,2,1)$, so that $\alpha(T) = (1,2,1,1)$. Now we apply $\beta$ to see that actually we come back to $T$. Consider the reversed sequence $\operatorname{rev}(\alpha(T)) = (1,1,2,1)$, which has content $\lambda$. Here we have only two subsequences

$$\operatorname{rev}(\alpha(T))_1 = (a_1,a_2,a_4) = (1,1,1)\ \text{and}\ \operatorname{rev}(\alpha(T))_2 = (a_3) = (2).$$
Hence it follows that

$$ \beta((\alpha(T))) = T = \begin{ytableau} 1 & 2 & 4 \\ 3 \end{ytableau} .$$
\end{example}

\begin{definition}[Definition IV.2.2, p. 257 in \cite{akin1982schur}]
Let $T$ be a Littlewood-Richardson skew tableau of shape $\nu / \lambda$ and content $\mu$. We can define a new tableau $T'$ of shape $\nu / \lambda$ and content $\mu'$ not necessarily sstd as following. Let $\underline{a}$ be the word associated to $T$ and consider $\underline{a}' := \alpha(\beta(\underline{a})')$, where with the notation $\beta(\underline{a})'$ we mean the conjugate tableau to $\beta(\underline{a})$, i.e. the one obtained from the latter transposing it as if it would be a matrix. Then define $T'$ as the skew tableau of shape $\nu / \lambda$ whose associated word is $\underline{a}'$. 
\end{definition}

\begin{example}
Let $\nu = (3,2)$, $\lambda = (1)$ and consider

$$ T = \begin{ytableau} *(gray) & 1 & 1 \\ 1 & 2 \end{ytableau},$$
where in this case the content is $\mu = (3,1)$. The associated word is $\underline{a} = (1,2,1,1)$ which is a Y-word. Then 

$$ \beta(\underline{a}) = \begin{ytableau} 1 & 2 & 4 \\ 3 \end{ytableau}\quad \text{and}\quad \beta(\underline{a})' = \begin{ytableau} 1 & 3 \\ 2 \\ 4 \end{ytableau} $$
so that $\alpha(\beta(\underline{a})') = (3,1,2,1)$. Therefore we get

$$ T' = \begin{ytableau} *(gray) & 2 & 1 \\ 3 & 1 \end{ytableau} .$$
\end{example}

\begin{definition}[Definition IV.2.4, p. 258]
Consider three partitions $\lambda$, $\mu$ and $\nu$ such that $N^{\lambda,\mu}_{\nu} \neq 0$ and consider the diagrams of $\nu / \lambda$ and $\mu$. Let $T$ be a Littlewood-Richardson skew tableau of shape $\nu / \lambda$ and content $\mu$. When considering a Young diagram, we denote with $(i,j)$ the entry of the diagram positioned in the $i$-th row and $j$-th column. We can define a map $\sigma_T : \nu / \lambda \longrightarrow \mu$ from the entries of the diagram of $\nu / \lambda$ to the entries of the diagram of $\mu$ such that

$$\sigma_T (i,j) := ( T(i,j), T'(i,j)), $$
for every entry $(i,j)$ of $\nu / \lambda$. In the following we adopt the notation $V^{\otimes \lambda}$ to denote $V^{\otimes |\lambda|}$ in which every factor is identified with some entry $(i,j)$ of the Young diagram of $\lambda$. 
\end{definition}

\begin{remark}
Recall that we can write the Young symmetrizer $c_{\lambda}^T : V^{\otimes d} \longrightarrow V^{\otimes d}$ as the composition $b_{\lambda} \cdot a_{\lambda}$ of two endomorphisms $a_{\lambda}^T,\ b_{\lambda}^T : V^{\otimes d} \longrightarrow V^{\otimes d}$ such that on decomposable elements they act as

$$ a_{\lambda} (v_1 \otimes \dots \otimes v_d) := \sum_{\sigma \in R_{\lambda}} v_{\sigma(1)} \otimes \dots \otimes v_{\sigma(d)},$$

$$ b_{\lambda} (v_1 \otimes \dots \otimes v_d) := \sum_{\tau \in C_{\lambda}} \operatorname{sgn}(\tau) v_{\tau(1)} \otimes \dots \otimes v_{\tau(d)}.$$
In the following we are going to use these two maps. In particular remark that

$$a_{\lambda} (V^{\otimes d}) = \Sym_{\lambda} V := \Sym^{\lambda_1} V \otimes \dots \otimes \Sym^{\lambda_k} V,$$
while for $b_{\lambda}$ we have
$$b_{\lambda} (V^{\otimes d}) = \superwedge_{\lambda'} V$$
so that we get the inclusion $\mathbb{S}_{\lambda} V \subset \superwedge_{\lambda'} V$ we have already seen. 
\end{remark}

\noindent We are ready to give the definition we need of the maps $\mathcal{M}^{\lambda,\mu}_{\nu} : \mathbb{S}_{\lambda} V^* \otimes \mathbb{S}_{\mu} V^* \longrightarrow \mathbb{S}_{\nu} V^*$. 

\begin{definition} \label{defmapmult}
Let $\lambda$, $\mu$ and $\nu$ be three partitions such that $N^{\lambda,\mu}_{\nu} \neq 0$, and fix a Littlewood-Richardson tableau $T$ of shape $\nu / \lambda$ and content $\mu$. We define the map $\mathcal{M}^{\lambda,\mu}_{\nu,T} : \mathbb{S}_{\lambda} V^* \otimes \mathbb{S}_{\mu} V^* \longrightarrow \mathbb{S}_{\nu} V^*$ as the following composition of maps

\begin{align*} \mathbb{S}_{\lambda} V^* \otimes \mathbb{S}_{\mu} V^* \rightarrow &\Sym_{\lambda} V^* \otimes \Sym_{\mu} V^* \rightarrow \\
&\rightarrow \Sym_{\lambda} V^* \otimes \Sym_{\nu / \lambda} V^* \rightarrow \Sym_{\nu} V^* \rightarrow \mathbb{S}_{\nu} V^*, \end{align*}
where the first map is the transpose of the map

$$ a_{\lambda} \otimes a_{\mu} : \Sym_{\lambda} V^* \otimes \Sym_{\mu} V^* \rightarrow \mathbb{S}_{\lambda} V^* \otimes \mathbb{S}_{\mu} V^*.$$
The second map is the tensor product of the identity on $\Sym_{\lambda}V^*$ with the map

$$\Sym_{\mu} V^* \longrightarrow (V^*)^{\otimes \mu} \longrightarrow (V^*)^{\otimes \nu / \lambda} \longrightarrow \Sym_{\nu / \lambda} V^*$$
that previously embeds $\Sym_{\mu} V^*$ in $V^{\otimes |\mu|}$ denoting the factors of $V^{\otimes |\mu|}$ with the entries of the diagram of $\mu$. Then it rearranges such factors accordingly to the map $\sigma_T$, and eventually it applies the map $a_{\nu / \lambda}$ to get $\Sym_{\nu / \lambda} V^*$. 
\noindent The third map is the tensor product of the maps

$$\Sym^{\lambda_i} V^* \otimes \Sym^{\nu_i - \lambda_i} V^* \longrightarrow \Sym^{\nu_i} V^*.$$
Eventually the last map is just the skew-symmetrizing part of the Young symmetrizer $c_{\nu}$, i.e.

$$b_{\nu} : \Sym_{\nu} V^* \longrightarrow \mathbb{S}_{\nu} V^*.$$
\end{definition}

\begin{example}
Consider the partitions $\lambda = (2,1)$, $\mu = (1,1)$ and $\nu = (3,2)$, so that $N^{(2,1),(1,1)}_{(3,2)} = 1$. In particular we have only one Littlewood-Richardson skew tableau of shape $(3,2) / (2,1)$ and content $(1,1)$ which is

$$ T = \begin{ytableau} *(gray) & *(gray) & 1 \\ *(gray) & 2 \end{ytableau} . $$
We describe the image of the element 

$$ t = (v_1 \wedge v_2 \otimes v_3 - v_2 \wedge v_3 \otimes v_1) \otimes (\alpha \wedge \beta) \in \mathbb{S}_{(2,1)} V \otimes \mathbb{S}_{(1,1)} V$$
via the map

$$\mathcal{M}^{(2,1),(1,1)}_{(3,2)} : \mathbb{S}_{(2,1)} V \otimes \mathbb{S}_{(1,1)} V \longrightarrow \mathbb{S}_{(3,2)} V\ .$$
To this end we describe all the maps involved in the composition defining $\mathcal{M}^{(2,1),(1,1)}_{(3,2)}$ as described in Definition \ref{defmapmult}. Remark that the element belonging to $\mathbb{S}_{(2,1)} V$ is the one determined by the sstd Young tableau

$$ \begin{ytableau} 1 & 3 \\ 2 \end{ytableau}\ .$$
At first we have the map

$$\mathbb{S}_{(2,1)} V \otimes \mathbb{S}_{(1,1)} V \longrightarrow \Sym_{(2,1)} V \otimes \Sym_{(1,1)} V$$
that sends

$$ t\ \mapsto\ (v_1v_3 \otimes v_2 - v_2v_3 \otimes v_1 ) \otimes (\alpha \otimes \beta - \beta \otimes \alpha). $$
Then we have to apply the map $\Sym_{(2,1)} V \otimes \Sym_{(1,1)} V \longrightarrow \Sym_{(2,1)} V \otimes \Sym_{(3,2)/(2,1)} V$ to this last element. Remark that in this case the map $\sigma_T$ acts as

$$\sigma_T :\ \begin{ytableau} *(gray) & *(gray) & b \\ *(gray) & a \end{ytableau} \longrightarrow \begin{ytableau} b \\ a \end{ytableau}  $$
and moreover $\Sym_{(1,1)} V = \Sym_{(3,2)/(2,1)} V = V \otimes V$. Now we apply the map $\Sym_{(2,1)} V \otimes \Sym_{(3,2)/(2,1)} V \longrightarrow \Sym_{(3,2)} V$, which is given by the tensor product of the maps

$$\Sym^2 V \otimes \Sym^1 V \longrightarrow \Sym^3 V,\quad \Sym^1 V \otimes \Sym^1 V \longrightarrow \Sym^2 V. $$
The image of our element is 

$$ v_1v_3\alpha \otimes v_2 \beta -  v_1v_3\beta \otimes v_2 \alpha -  v_2v_3\alpha \otimes v_1 \beta + v_2v_3\beta \otimes v_1 \alpha.$$
Eventually we have to apply the map $\Sym_{(3,2)} V \longrightarrow \mathbb{S}_{(3,2)} V$ which is the skew-symmetrizing part of the Young symmetrizer. We can represent the image of such a map using Young tableaux of shape $(3,2)$

$$\begin{ytableau} 1 & 3 & \alpha \\ 2 & \beta \end{ytableau} - \begin{ytableau} 1 & 3 & \beta \\ 2 & \alpha \end{ytableau} - \begin{ytableau} 2 & 3 & \alpha \\ 1 & \beta \end{ytableau} + \begin{ytableau} 2 & 3 & \beta \\ 1 & \alpha \end{ytableau}$$
where we are considering such tableaux as elements of $V^{\otimes 5}$ as described in Section \ref{primasez}. 
\end{example}

\begin{remark}The map $\mathcal{M}^{\lambda,\mu}_{\nu,T} : \mathbb{S}_{\lambda} V \otimes \mathbb{S}_{\mu} V \longrightarrow \mathbb{S}_{\nu} V$ is clearly equivariant and by construction its image is contained in $\mathbb{S}_{\nu}V$. Moreover it is easy to see that since these maps are determined by a choice of a Littlewood-Richardson skew tableau, they all acts in different way and they are linearly independent.
\end{remark}

\medskip

\begin{remark} \label{trim} The last thing we would like to underline is that some of these multiplication maps are involved in the construction of the elements of $\mathbb{S}_{\lambda}V$ via Young symmetrizers. Indeed, for example consider $\lambda=(3,2,1)$. We know that $\mathbb{S}_{(3,2,1)} V$ is the image of the respective Young symmetrizer applied $V^{\otimes 6}$ using symmetrization along rows and then skew-symmetrizations along columns. Via this interpretation we can see  $\mathbb{S}_{\lambda}V$ obtained by adding row by row, i.e. via the following composition

$$\mathbb{S}_{(3)}V \otimes \mathbb{S}_{(2)} V \otimes \mathbb{S}_{(1)} V \longrightarrow \mathbb{S}_{(3,2)}V \otimes \mathbb{S}_{(1)} V \longrightarrow \mathbb{S}_{(3,2,1)} V $$

\noindent where the first map is $\mathcal{M}^{(3),(2)}_{(3,2)} \otimes \text{id}_{\mathbb{S}_{(1)}V}$ and the second one is $\mathcal{M}^{(3,2),(1)}_{(3,2,1)}$. Note that all these maps that add only a row are unique up to scalar multiplication since the respective Littlewood-Richardson coefficient is $1$. Hence we can see every module obtained by adding row by row in the order specified by the partition. We sometimes refer to the inverse passage, i.e. deleting row by row, as {\it trimming}.
\end{remark}

\begin{remark} \label{gradedring}
We end this section with a last remark. The space $\mathbb{S}^{\bullet} V$ together with the maps $\mathcal{M}^{\lambda,\mu}_{\nu}$ is a graded ring. Precisely the graded pieces are given by

$$\left ( \mathbb{S}^{\bullet} V \right )_a := \bigoplus_{\lambda\ :\ |\lambda| = a} \mathbb{S}_{\lambda} V. $$
Moreover, given two elements $g \in \mathbb{S}_{\lambda} V$ and $h \in \mathbb{S}_{\mu} V$ where $|\lambda| = a$ and $|\mu| = b$, then if $N^{\lambda,\mu}_{\nu} \neq 0$ we get that the element $\mathcal{M}^{\lambda,\mu}_{\nu}(g \otimes h)$ belongs to $\mathbb{S}_{\nu} V$, where $|\nu| = |\lambda| + |\mu| = a+b$. Therefore the condition

$$\left ( \mathbb{S}^{\bullet} V \right )_a \cdot \left ( \mathbb{S}^{\bullet} V \right )_b \subset \left ( \mathbb{S}^{\bullet} V \right )_{a+b}$$
is satisfied.
\end{remark}

\section{Schur apolarity action} \label{terzasez} \setcounter{equation}{0} \medskip

\noindent In this section we define the {\it Schur apolarity action} which will be the foundation of our results. \smallskip

We would like to define an apolarity action whose domain is $\mathbb{S}_{\lambda} V \otimes \mathbb{S}_{\mu} V^*$, with $\lambda$ and $\mu$ suitably chosen, which extends the symmetric and the skew-symmetric action. Naively it seems natural to require that $\mu \subset \lambda$ and that this map is 

$$ \varphi : \mathbb{S}_{\lambda} V \otimes \mathbb{S}_{\mu} V^* \longrightarrow \mathbb{S}_{\lambda / \mu} V $$

\noindent and to set $\varphi$ equal to the zero map if $\mu \not\subset \lambda$. 

Before going along with the description, we have to recall the definition of {\it skew-symmetric apolarity action} given in \cite{arrondo2021skew}.

\begin{definition} Given two integers $1 \leq h \leq k \leq \dim(V)$, it can be described as

$$ \superwedge^{k} V \otimes \superwedge^{h} V^* \longrightarrow \superwedge^{k-h} V$$ 
$$ (v_1 \wedge \dots \wedge v_k) \otimes (\alpha_1 \wedge \dots \wedge \alpha_h)\ \longmapsto \sum_{R \subset \{1,\dots,k\}} \operatorname{sign}(R) \cdot \det(\alpha_i(v_j)_{j \in R}) \cdot v_{\overline{R}} $$

\noindent where the sum runs over all the possible ordered subsets $R$ of $\{1,\dots,k\}$ of cardinality $h$ and the set $\overline{R}$ is the complementary set to $R$ in $\{1,\dots,k\}$. The element $v_{\overline{R}}$ is the wedge product of the vectors whose index is in $\overline{R}$, and $\operatorname{sign}(R)$ is the sign of the permutation which sends the sequence of integers from $1$ to $k$ to the sequence in which the elements of $R$ appear first already ordered, keeping the order of the other elements. 
\end{definition} 
\begin{remark}
The skew-symmetric apolarity action can be regarded as the composition

$$ \superwedge^h V^* \otimes \superwedge^k V \longrightarrow \superwedge^h V^* \otimes \superwedge^{h} V \otimes \superwedge^{k-h} V \longrightarrow \superwedge^{k-h}V$$

\noindent where the first map is the tensor product of the identity on the first factor and the comultiplication of the exterior algebra regarded as a bialgebra on the second one. The second map acts with the identity on the last factor and acts with the determinantal pairing $\superwedge^h V^* \otimes \superwedge^{h} V \longrightarrow \mathbb{C}$ on the first two. 
\end{remark}

\begin{definition} \label{Schurapodef} Let $\mathbb{S}_{\lambda} V \subset \superwedge_{\lambda'} V$ and $\mathbb{S}_{\mu} V^*\subset \superwedge_{\mu'} V$ be two Schur modules.  Then the Schur apolarity action is defined as the map

 $$\varphi: \mathbb{S}^{\bullet} V \otimes \mathbb{S}^{\bullet} V^* \longrightarrow \mathbb{S}^{\bullet} V$$

\noindent such that when restricted to a product $\mathbb{S}_{\lambda} V \otimes \mathbb{S}_{\mu} V^*$, it is the trivial map if $\mu \not\subset \lambda$. Otherwise it is given by the restriction of the map

$$\tilde{\varphi} : \superwedge_{\lambda'} V \otimes \superwedge_{\mu'} V^* \longrightarrow
 \superwedge_{\lambda' / \mu'} V $$

\noindent that acts as the tensor product of skew-symmetric apolarity actions $\superwedge^{\lambda_i'} V \otimes \superwedge^{\mu_i'} V^* \longrightarrow \superwedge^{\lambda_i'-\mu_i'} V$.
\end{definition}

\begin{example} \label{esschurapoflag} Consider the partitions $\lambda = (3,2,1)$ and $\mu = (2)$. We have that $\mu \subset \lambda$ so that the Schur apolarity action $\varphi$ restricted to $\mathbb{S}_{\lambda} V \otimes \mathbb{S}_{\mu} V^*$ is not trivial. Consider the element

$$ t = v_1 \wedge v_2 \wedge v_3 \otimes v_1 \wedge v_2 \otimes v_1 \in \mathbb{S}_{(3,2,1)} V$$
and let $\alpha\beta \in \Sym^2 V$ be any element. Then the Schur apolarity action acts as

\begin{align*}
&\mathcal{C}^{(3,2,1),(2)}_{t_1}(\alpha \beta) = \\
&= \sum_{i=1}^3 (-1)^{i+1} \left [ \alpha(v_i)\beta(v_1) + \alpha(v_1)\beta(v_i) \right ] \cdot v_1 \wedge \dots \wedge \hat{v_i} \wedge \dots \wedge v_3 \otimes v_2 \otimes v_1 + \\
&- \sum_{i=1}^3  (-1)^{i+1} \left [ \alpha(v_i)\beta(v_2) + \alpha(v_2)\beta(v_i) \right ] \cdot v_1 \wedge \dots \wedge \hat{v_i} \wedge \dots \wedge v_3 \otimes v_1 \otimes v_1.
\end{align*} 
\end{example}

\begin{example}
Consider $\lambda = (2,2)$ and $\mu = (1,1)$. Let
$$ t = v_1 \wedge v_2 \otimes v_1 \wedge v_3 + v_1 \wedge v_3 \otimes v_1 \wedge v_2 \in \mathbb{S}_{(2,2)}\mathbb{C}^4$$

\noindent and let $s = x_1 \wedge x_2 \in \mathbb{S}_{(1,1)}(\mathbb{C}^4)^*$. Then

\begin{align*} \varphi(t \otimes s)& = \\ & = \det \left (\begin{matrix} x_1(v_1) & x_1(v_2) \\ x_2(v_1) & x_2(v_2) \end{matrix}\right ) v_1 \wedge v_3 + \det \left (\begin{matrix} x_1(v_1) & x_1(v_3) \\ x_2(v_1) & x_2(v_3) \end{matrix}\right ) v_1 \wedge v_2  \\ & = v_1 \wedge v_3. \end{align*}
\end{example}
\medskip

\noindent A priori we are not able to say that the image is contained in the skew Schur module $\mathbb{S}_{\lambda / \mu} V$. The following fact will clear our minds.

\begin{prop} \label{imgSchurAction}
Let $ \varphi: \mathbb{S}_{\lambda} V \otimes \mathbb{S}_{\mu} V^* \to \superwedge_{\lambda'/\mu'} V$ be the Schur apolarity action restricted to $\mathbb{S}_{\lambda} V \otimes \mathbb{S}_{\mu} V^*$. Then image of $\varphi$  is contained in $\mathbb{S}_{\lambda / \mu} V$. 
\end{prop}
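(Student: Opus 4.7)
The plan is to exhibit $\varphi(t \otimes \alpha)$ as an element of the image of the skew Young symmetrizer $c_{\lambda/\mu}$, which by Definition \ref{skewSchurModule} is precisely $\mathbb{S}_{\lambda/\mu} V$. First I would observe that landing in $\superwedge_{\lambda'/\mu'} V$ is immediate from the very definition of $\tilde{\varphi}$: it is a tensor product of skew-symmetric apolarity actions $\superwedge^{\lambda_i'} V \otimes \superwedge^{\mu_i'} V^* \to \superwedge^{\lambda_i'-\mu_i'} V$, each factor of which already produces an element with the correct column antisymmetry. In particular, the column-antisymmetrizing piece $b_{\lambda'/\mu'}$ of $c_{\lambda/\mu}$ is automatic and no work is required there.

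The substantive step is to show that the output also carries the row-symmetrization $a_{\lambda/\mu}$. For this I would write a generic element of the domain as $t = c_\lambda(w) = b_\lambda a_\lambda(w)$ and $\alpha = c_\mu(\beta) = b_\mu a_\mu(\beta)$ via the factorization of Remark \ref{trim}, and verify the identity
$$ \tilde{\varphi}\bigl(c_\lambda(w) \otimes c_\mu(\beta)\bigr) \;=\; c_{\lambda/\mu}(\tilde{w}) $$
for an explicit $\tilde{w} \in V^{\otimes(|\lambda|-|\mu|)}$ obtained from $w$ by contracting, column by column, its $\mu_i'$ topmost entries against the corresponding entries of $\beta$ via the duality pairing $V \otimes V^* \to \mathbb{K}$. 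The heuristic is that, since $\mu$ occupies the top-left corner of $\lambda$ and the skew-symmetric apolarity contracts precisely the top $\mu_i'$ boxes of each column, the restriction of $a_\lambda$ to the surviving indices coincides with $a_{\lambda/\mu}$, and likewise $b_\lambda$ restricts to $b_{\lambda'/\mu'}$.

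The most concrete way to check this is on the basis $\{c_\lambda(v_{(T,S)})\}$ supplied by Proposition \ref{BaseSchur}, paired against a basis of $\mathbb{S}_\mu V^*$ built the same way: expanding each column's skew-symmetric apolarity via the Laplace-type formula of Definition \ref{Schurapodef} produces a signed sum of tensor monomials that can be reorganized as $c_{\lambda/\mu}$ applied to a skew bitableau of shape $\lambda/\mu$, whence the claim.

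The main obstacle is the combinatorial bookkeeping in the identity above: one must track how the row-symmetrizer of $\lambda$, the dual row-symmetrizer of $\mu$, and the column-wise duality contractions interact, and confirm that the residual symmetries on the $(\lambda/\mu)$-shaped indices assemble into precisely $c_{\lambda/\mu}$. Once this equality is in hand, Definition \ref{skewSchurModule} immediately gives $\varphi(t \otimes \alpha) \in \mathbb{S}_{\lambda/\mu} V$, completing the argument.
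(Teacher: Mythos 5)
Your plan follows essentially the same route as the paper's proof: expand both basis elements via their Young symmetrizers, apply the column-wise skew-symmetric contractions, and regroup the resulting signed sum of skew tableaux as an image of $c_{\lambda/\mu}$; the ``combinatorial bookkeeping'' you defer is precisely the content of the paper's argument, which collects terms sharing the same contraction coefficient and the same fillings on the disjoint subdiagrams and checks that the residual row and column permutations are exactly those prescribed by $c_{\lambda/\mu}$. The one inaccuracy is your explicit formula for $\tilde{w}$ as a single column-by-column contraction of the topmost entries: the correct preimage is a linear combination over all the ways the symmetrizers of $\lambda$ and $\mu$ distribute the contracted boxes, each contributing its own scalar $\alpha(a)\beta(b)\cdots$, though by linearity of $c_{\lambda/\mu}$ this does not affect the containment.
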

\begin{proof}

We prove it for two elements of the basis of $\mathbb{S}_{\lambda}V$ and $\mathbb{S}_{\mu} V^*$ given by the projections $c_{\lambda}$ and $c_{\mu}$ respectively. In the following the letters $a,b,\dots$ denote the elements of the filling of tableaux of shape $\lambda$, while greek letters $\alpha,\beta,\dots$ denote the elements of the filling of tableaux of shape $\mu$. We use the pictorial notation we have introduced when generating a basis of all these spaces in Section \ref{primasez}. Performing Schur apolarity means erasing the diagram of $\mu$ in the diagram of $\lambda$ in the top left corner contracting the respective letters coming from the elements of $V$ with those of $V^*$. Every such skew tableau comes with a coefficient given by the contraction of the $\alpha, \beta, \dots$ with the $a, b, \dots$ in a specific order according with our notation. The image of Schur apolarity between two elements of the basis is then given by a sum of skew tableaux of shape $\lambda / \mu$ with proper fillings and coefficients. Remark that skew tableaux may contain {\it disjoint} subdiagrams, i.e. diagrams which do not share neither a row or a column. For example 

 \begin{center}
\ytableausetup{nosmalltableaux}
\begin{ytableau}
*(gray)  & *(gray)  & c  \\
d & e  \\
f 
\end{ytableau} \end{center}

\noindent contains two disjoint subdiagrams, $\lambda_1 = (1)$ and $\lambda_2 = (2,1)$. Let us group the addenda in the image in such a way that they all have the same coefficients and the respective disjoint subdiagrams share the same fillings. For example, consider the tableaux

$$ U = \begin{ytableau} a & b & c \\ d & e \\ f \end{ytableau} \quad \text{and} \quad S = \begin{ytableau} \alpha & \beta \end{ytableau} $$

\noindent and perform all the permutations accordingly to the maps $c_{\lambda}$ and $c_{\mu}$. Then we have to contract every single addend of the first tableau with every addend of the second as we have said above. In all these contractions we may find a summand like

\begin{equation} \label{SchurApoImg} \alpha(a) \beta(b) \cdot \left ( \begin{ytableau} *(gray)  & *(gray)  & c  \\ d & e  \\ f \end{ytableau} - \begin{ytableau} *(gray)  & *(gray)  & c  \\ f & e  \\ d \end{ytableau} +    \begin{ytableau} *(gray)  & *(gray)  & c  \\ e & d  \\ f \end{ytableau} - \begin{ytableau} *(gray)  & *(gray)  & c  \\ f & d  \\ e \end{ytableau} \right ). \end{equation}

\noindent It is possible to find such elements for two reasons. At first, if we consider permutations of the bigger diagram which send the erased elements in the right positions, we can find disjoint subdiagrams which share the same fillings. Moreover, if the elements fixed are contracted by a single addend of the element in $\mathbb{S}_{\mu} V^*$, the coefficients are always the same. We have collected every single group of skew tableaux  such that the fillings of every subdiagram are permuted accordingly to the symmetrization rules of a std skew tableau of shape $\lambda / \mu$. Hence, every such group is the image via $c_{\lambda / \mu}$ of some element of $V^{\otimes |\lambda| - |\mu|}$. In the Example \ref{SchurApoImg}, the element is image via $c_{\lambda / \mu}$ of the tableau

$$ \alpha(a) \beta(b) \cdot \begin{ytableau} *(gray) & *(gray) & c \\ d & e \\ f \end{ytableau} . $$ 

\noindent In particular, remark that the signs due to permutations come in the right way since permutations along rows of the skew tableau come from permutations along rows of the bigger diagram. The same happens for exchanges along columns with the proper sign of the permutations. This proves that the image is contained in $\mathbb{S}_{\lambda / \mu} V$. 
\end{proof} 

\begin{remark} Proposition \ref{imgSchurAction} tells us that these action can be restricted to the known apolarity maps. In the case $\lambda = (d)$ and $\mu = (e)$ such that $e \leq d$ we have that $\mathbb{S}_{\lambda}V = \Sym^d V$ and $\mathbb{S}_{\mu} V^* = \Sym^e V^*$. It follows that $\mathbb{S}_{\lambda / \mu} V = \Sym^{d-e}V$ and by Proposition \ref{imgSchurAction} the Schur apolarity action coincides with the classic apolarity action. Transposing all the diagrams, if $\lambda = (1^d)$ and $\mu = (1^e)$, then the Schur apolarity action coincides with the skew symmetric apolarity action.
\end{remark}

\noindent We conclude with some basic definitions.

\begin{definition} \label{schurcatdef}
Fix $f \in \mathbb{S}_{\lambda}V$ and let $\mu \subset \lambda$, where $\mu$ and $\lambda$ both have length strictly less then $\dim(V)$. The map induced by the Schur apolarity action $\varphi$, introduced in Definition \ref{Schurapodef}, is

$$ \mathcal{C}^{\lambda,\mu}_f : \mathbb{S}_{\mu} V^* \longrightarrow \mathbb{S}_{\lambda / \mu} V$$
defined as $\mathcal{C}^{\lambda,\mu}_f(g):=\varphi(f \otimes g)$ for any $g \in \mathbb{S}_{\mu}V^*$, and is called {\it catalecticant map of $\lambda$ and $\mu$}, or simply {\it catalecticant map} if no specification is needed.
\end{definition}

\begin{definition}
The {\it apolar set to} $f \in \mathbb{S}_{\lambda}V$ is the vector subspace $f^{\perp}\subset\mathbb{S}^{\bullet}V^*$ defined as

$$ f^{\perp} := \bigoplus_{\mu} \ker \mathcal{C}^{\lambda,\mu}_f. $$ 
\end{definition}
The apolar set of a point $f \in \mathbb{S}_{\lambda} V$ will turn out to be useful on computing its structured rank.
\bigskip

\section{Additive rank and algebraic varieties} \label{quartasez}  \setcounter{equation}{0}\bigskip

This section is devoted to the description of rational homogeneous varieties and the use of the Schur apolarity action. We recall at first basic facts of the theory and we conclude with a result linking additive rank decompositions and the Schur apolarity.

\begin{definition} \label{Xrango}
Let $X \subset \mathbb{P}^N$ be a non degenerate algebraic variety and let $p \in \mathbb{P}^N$. The $X$-{\it rank of $p$} is
$$r_X(p) := \min \{ r\ :\ p \in \langle p_1,\dots,p_r \rangle,\ \text{with}\ p_1,\dots,p_r \in X \}. $$
\end{definition} \smallskip

\noindent Let us see in detail the rational homogeneous varieties we are looking for. Given the group $G=SL(V)$, a {\it representation of} $G$ is a vector space $W$ together with a morphism $\rho : G \longrightarrow GL(W)$. We use the notation $g \cdot w$ instead of $\rho(g) w$, with $g \in G$ and $w \in W$. If  we choose a basis for $V$, we may identify $G$ with $SL(n)$. Consider the subgroup $H$ of diagonal matrices $x = \operatorname{diag}(x_1,\dots,x_n)$. An element $w\in W$ is called {\it weight vector} with {\it weight} $\alpha = (\alpha_1,\dots,\alpha_n)$, with $\alpha_i$ integers, if

$$ x \cdot w = x_1^{\alpha_1} \dots x_n^{\alpha_n} w,\ \text{for all}\ x \in H. $$ 

\noindent Since $H$ is a subgroup in which every element commutes, the space $W$ can be decomposed as

$$ W = \bigoplus_{\alpha} W_{\alpha}, $$

\noindent where $W_{\lambda}$ is given by all weight vectors of weight $\alpha$ and is called {\it weight space}. If $W = \mathbb{S}_{\lambda} V$ every element of the basis $c_{\lambda}(w_S)$ with $S$ sstd tableau of shape $\lambda$, is a weight vector. Let $B \subset G$ be the subgroup of upper triangular matrices. A weight vector $w \in W$ is called {\it highest weight vector} if $B \cdot w = \mathbb{C}^* \cdot w$. It is well known that $W$ is an irreducible representation if and only if there is a unique highest weight vector, see \cite{fulton2013representation}. In the case of Schur modules we have

\begin{prop} \label{hwvSchur}
If $W = \mathbb{S}_{\lambda}V$, then the only highest weight vector in $W$ is $c_{\lambda}(v_U)$ up to scalar multiplication, where $U$ is the tableau of shape $\lambda$ whose $i-$th row contains only the number $i$, and $v_U \in V^{\otimes |\lambda|}$ is defined in Section \ref{primasez}.
\end{prop}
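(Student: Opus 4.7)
The plan is to show that $c_{\lambda}(v_U)$ is a highest weight vector of weight $\lambda$ and then to invoke the irreducibility of $\mathbb{S}_{\lambda}V$ to conclude uniqueness up to scalar.

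First I would identify $v_U$ and $c_\lambda(v_U)$ explicitly. Recall that the chosen std tableau $T$ of shape $\lambda$ is filled column by column, from top to bottom, with $1,\dots,|\lambda|$. Since $U$ places the integer $i$ in every box of the $i$-th row, the construction $v_{(T,U)}$ from Section \ref{primasez} produces
$$v_U = \bigotimes_{j=1}^{\lambda_1} (v_1\otimes v_2\otimes\cdots\otimes v_{\lambda_j'}),$$
where the $j$-th factor corresponds to the $j$-th column of $T$. Applying the column skew-symmetrizer $b_{\lambda}$ from the Remark preceding Definition \ref{defmapmult} turns each column block into a wedge product, and applying $a_{\lambda}$ afterwards only scales each term by $|R_\lambda|$ since the content is row-constant. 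Hence, up to a nonzero scalar,
$$c_{\lambda}(v_U) \;=\; (v_1\wedge\cdots\wedge v_{\lambda_1'})\otimes(v_1\wedge\cdots\wedge v_{\lambda_2'})\otimes\cdots\otimes(v_1\wedge\cdots\wedge v_{\lambda_{\lambda_1}'}),$$
viewed inside $\superwedge_{\lambda'}V$.

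Next I would compute the action of the Borel subgroup $B$. For a diagonal matrix $x=\operatorname{diag}(x_1,\dots,x_n)\in H$, the integer $v_i$ is rescaled by $x_i$, and since $v_i$ appears exactly $\lambda_i$ times in the tensor above, we obtain $x\cdot c_\lambda(v_U)=x_1^{\lambda_1}\cdots x_n^{\lambda_n}\, c_\lambda(v_U)$, so the weight is $\lambda$. It remains to check that unipotent upper triangular elements fix $c_\lambda(v_U)$; it suffices to treat the elementary generators $e_{ij}(t)=I+tE_{ij}$ with $i<j$. Such an element fixes every $v_k$ with $k\neq j$ and sends $v_j$ to $v_j+tv_i$. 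In each column wedge $v_1\wedge\cdots\wedge v_{\lambda_c'}$ of $c_\lambda(v_U)$, either $j>\lambda_c'$ (so $v_j$ does not appear and the wedge is fixed) or $j\le\lambda_c'$, in which case $i<j\le\lambda_c'$ also appears and the extra term $t\,v_1\wedge\cdots\wedge v_i\wedge\cdots\wedge v_i\wedge\cdots\wedge v_{\lambda_c'}$ vanishes by repetition. Therefore each wedge is fixed and so is the tensor product, showing that $c_\lambda(v_U)$ is indeed a highest weight vector.

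Finally, for the uniqueness up to scalar, I would argue via irreducibility. Since $SL(V)$ is semisimple and $\mathbb{S}_{\lambda}V$ is irreducible, its highest weight space is one-dimensional by the general theory of highest weight representations. Alternatively one can give a self-contained argument using the basis of Proposition \ref{BaseSchur}: each basis vector $c_\lambda(v_{(T,S)})$ is a weight vector whose weight equals the content of $S$, and among sstd tableaux of shape $\lambda$ the only one with content $\lambda$ is $U$ itself (the $\lambda_i$ copies of $i$ must fit weakly increasingly along rows and strictly increasingly down columns, which forces $i$'s to occupy exactly the $i$-th row). Hence the $\lambda$-weight space is spanned by $c_\lambda(v_U)$. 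Since any highest weight vector in $\mathbb{S}_\lambda V$ must have weight equal to the unique maximal weight $\lambda$, this gives uniqueness. The only technical obstacle is the explicit identification of $c_\lambda(v_U)$ in step one; once that is done, both the $B$-invariance and the dimension count of the $\lambda$-weight space are essentially combinatorial verifications.
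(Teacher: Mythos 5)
Your argument is correct and complete; note that the paper itself does not prove this proposition at all but simply cites \cite[Lemma 4, p.~113]{fulton1997young}, so your proposal supplies a self-contained verification where the paper outsources the work. Your three steps --- identifying $c_{\lambda}(v_U)$ as the tensor product of column wedges, checking invariance under the elementary unipotent generators $e_{ij}(t)$ with $i<j$ via the repeated-factor argument, and pinning down uniqueness by observing that $U$ is the only sstd tableau of shape $\lambda$ with content $\lambda$ (so the $\lambda$-weight space is one-dimensional in the basis of Proposition \ref{BaseSchur}) --- are all sound, and the second uniqueness argument is preferable to the appeal to general highest-weight theory because it stays inside the combinatorial framework the paper has already set up. One small imprecision: the paper's convention is $c_{\lambda}=b_{\lambda}\cdot a_{\lambda}$, i.e.\ the row symmetrizer $a_{\lambda}$ is applied \emph{first}; you describe applying $b_{\lambda}$ first and then $a_{\lambda}$, and in that order the claim that $a_{\lambda}$ ``only scales each term'' would fail, since the column-permuted summands of $b_{\lambda}(v_U)$ are no longer row-constant. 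In the correct order the computation is even easier ($a_{\lambda}(v_U)=|R_{\lambda}|\,v_U$ because every row of $U$ is constant, and then $b_{\lambda}$ produces the column wedges), and it yields exactly the displayed formula, so the slip does not affect the conclusion --- but you should state the composition in the right order.
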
  

\noindent For a proof see \cite[Lemma 4, p. 113]{fulton1997young}. The theory of highest weights and geometry are closely related. Given a highest weight vector $v$, there is a subgroup of $G$

$$ P = \{g \in G\ :\ g \cdot [v] = [v] \in \mathbb{P}(W) \} $$

\noindent called {\it parabolic subgroup}. The subgroup $P$ may not be normal and hence the quotient $G / P$ is just a space of cosets. Moreover, by the definition of $P$, the space $G/P$ can be identified with the orbit $G \cdot [v] \subset \mathbb{P}(W)$. It is a general fact that $G/P$ is compact and hence a closed subvariety of $\mathbb{P}(W)$ called {\it rational homogeneous variety}. 
\smallskip

\noindent Coming back to the symmetric and skew-symmetric cases, the highest weight vectors of the modules $\Sym^d V = \mathbb{S}_{(d)} V$ and $\superwedge^k V = \mathbb{S}_{(1,\dots,1)} V$, $k < \dim(V)$, are the elements $v_1^d$ and $v_1 \wedge \dots \wedge v_k$ respectively. The action of $G$ on these two elements generates the Veronese and the Grassmann varieties respectively. 

\begin{example} Consider $\lambda = (2,1)$. Then the respective highest weight vector is determined by the tableau

$$ U =\begin{ytableau} 1 & 1 \\ 2 \end{ytableau} $$

\noindent and $c_{\lambda}(v_U) = v_1 \wedge v_2 \otimes v_1$. The action of $G$ on such an element generates a closed orbit which may be identified with the variety

\begin{align*} \mathbb{F}(1,2;V) = \{ (V_1,V_2)\ :\ V_1 \subset V_2 \subset V,\ \dim(V_1)=1,\ &\dim(V_2)=2 \} \\
&\subset \mathbb{G}(1,V) \times \mathbb{G}(2,V) \end{align*}

\noindent called {\it flag variety} of lines in planes in $V$. 
\end{example}
In general, the minimal orbit inside $ \mathbb{P}(\mathbb{S}_{\lambda}V)$ is the following Flag variety

$$ \mathbb{F}(k_1,\dots,k_s; V) := \{ (V_1,\dots,V_s)\ :\ V_1 \subset \dots \subset V_s \subset V,\ \dim V_i = k_i \} $$

\noindent embedded with $\mathcal{O}(d_1,\dots,d_s)$, where the $k_i$ and $d_i$ are integers determined by $\lambda$ which we are going to describe in a moment. The Veronese and the Grassmann varieties appear as particular cases. Fixed a rational homogeneous variety $X \subset \mathbb{P}(\mathbb{S}_{\lambda}V)$, we will refer to the $X$-rank as $\lambda${\it -rank} in order to underline the connection with the respective representation in which $X$ is embedded. The points of $\lambda$-rank $1$ are of the form 

$$ (v_1 \wedge \dots \wedge v_{k_s})^{\otimes d_s} \otimes \dots \otimes (v_1 \wedge \dots \wedge v_{k_1})^{\otimes d_1} $$

\noindent and they represent the flag

$$ \langle v_1,\dots,v_{k_1} \rangle \subset \dots \subset \langle v_1,\dots,v_{k_s} \rangle.$$

\noindent Remark that the notation of tensors and flags may seem inverted but it is coherent with the action of the group. \smallskip
 
\begin{remark} In the classic and skew-symmetric apolarity theories, given a point of symmetric or skew-symmetric rank $1$ respectively, there is attached an ideal generated in the symmetric or exterior algebra respectively.
In analogy to the known apolarity theories, we give the following definition.
\end{remark}

\begin{definition} \label{subpoint}
Let $\lambda = (\lambda_1^{a_1},\dots,\lambda_k^{a_k})$ be a partition, where $i^j$ means that $i$ is repeated $j$ times, such that $a_1 + \dots + a_k < n$. The variety $X \subset \mathbb{P}(\mathbb{S}_{\lambda} V)$ is the flag variety $\mathbb{F}(h_1,\dots,h_k; V)$ embedded with $\mathcal{O}(d_1,\dots,d_k)$ where

$$ h_i = \sum_{j=1}^i a_i,\ \text{and}\ d_i = \lambda_i - \lambda_{i+1},\ \text{setting}\ \lambda_{k+1}=0. $$

\noindent A point $p \in X$ is of the form

$$ p = (v_1 \wedge \dots \wedge v_{h_k})^{\otimes d_k} \otimes \dots \otimes (v_1 \wedge \dots \wedge v_{h_1})^{\otimes d_1}$$

\noindent and it represents the flag 

$$W_1 = \langle v_1, \dots, v_{h_1} \rangle \subset \dots \subset W_k = \langle v_1,\dots,v_{h_k} \rangle. $$

\noindent We may assume that their annihilators are generated by

$$W_1^{\perp} = \langle x_{h_1 + 1},\dots, x_n \rangle \supset \dots \supset W_k^{\perp} = \langle x_{h_k+1},\dots,x_n \rangle. $$
Consider the spaces

$$\Sym^1 W_k^{\perp},\ \Sym^{d_k+1} W_{k-1}^{\perp},\dots,\ \Sym^{d_k+\dots+d_2+1} W_1^{\perp}$$
which we will refer to as ``generators''. We define the {\it ideal $I(p)$ associated to the point} $p$ as the ideal generated by the generators inside the graded ring $\left(\mathbb{S}^{\bullet} V^*,\ \mathcal{M}^{\lambda,\mu}_{\nu} \right )$ as described in Remark \ref{gradedring}, where $\mathcal{M}^{\lambda,\mu}_{\nu}$ are the multiplication maps introduced in Definition \ref{defmapmult}. 
\end{definition}

\begin{prop} \label{PropIdealKill} Let $p \in \mathbb{S}_{\eta} V$ be a point of $\eta$-rank $1$. Then the associated ideal $I(p)$ is such that all its elements kill $p$ via the Schur apolarity action. 
\end{prop}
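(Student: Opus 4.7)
The plan is to verify the statement in two steps: (a) each generator of $I(p)$ kills $p$ via Schur apolarity; (b) the ``kills $p$'' condition is preserved under the multiplication maps $\mathcal{M}^{\rho,\mu}_\nu$, so the whole ideal kills $p$. Throughout I use the exterior embedding
$$p = (v_1 \wedge \cdots \wedge v_{h_k})^{\otimes d_k} \otimes \cdots \otimes (v_1 \wedge \cdots \wedge v_{h_1})^{\otimes d_1} \in \superwedge_{\eta'}V,$$
whose successive columns (tensor factors) have lengths $h_k$ (with multiplicity $d_k$), $h_{k-1}$ (multiplicity $d_{k-1}$), $\dots$, $h_1$ (multiplicity $d_1$). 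Since the Schur apolarity of Definition \ref{Schurapodef} is the tensor product of skew-symmetric apolarities on each column, vanishing of a single column forces vanishing of the whole.

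For step (a), fix a generator $f = \alpha_1 \cdots \alpha_r \in \Sym^r W_j^{\perp}$ with $r = d_k + \cdots + d_{j+1} + 1 = \lambda_{j+1} + 1$; here $\mu = (r)$ and $\mu' = (1^r)$, so the Schur apolarity applies skew apolarity against one covector on each of the first $r$ columns of $p$. The $r$-th column of $p$ is exactly the first column in the ``length $h_j$'' block, namely $v_1 \wedge \cdots \wedge v_{h_j}$, and every $\alpha_l$ annihilates $v_1, \ldots, v_{h_j}$; hence on this column we get $\sum_m (-1)^{m+1}\alpha_l(v_m)\, v_1 \wedge \cdots \wedge \widehat{v_m} \wedge \cdots \wedge v_{h_j} = 0$, and this happens in every term of the symmetric sum over orderings of the $\alpha_l$.

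For step (b), any element of $I(p)$ is a sum of terms $\mathcal{M}^{\rho,(r)}_{\nu}(g \otimes f)$ with $f$ a generator and $g \in \mathbb{S}_\rho V^*$ arbitrary. Since $\mu=(r)$ is a single row, the Littlewood--Richardson rule reduces to Pieri: $\nu/\rho$ is a horizontal strip of $r$ boxes, and following Definition \ref{defmapmult} the $r$ factors of $f$ are distributed one per strip box; after the column-antisymmetrizer $b_\nu$, the embedded element in $\superwedge_{\nu'}V^*$ has the structure that each of the $r$ ``Case B'' columns of $\nu$ (those with $\nu'_i = \rho'_i + 1$) carries a wedge in $\superwedge^{\nu'_i}V^*$ containing one $\alpha_l \in W_j^\perp$. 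Assuming $\nu \subset \eta$ (otherwise the apolarity is trivially $0$), the key input is a pigeonhole: there are $r = \lambda_{j+1}+1$ distinct Case B columns but only $\lambda_{j+1}$ columns of $\eta$ of length strictly greater than $h_j$, so at least one Case B column $i^*$ satisfies $\eta'_{i^*} \leq h_j$. On column $i^*$, the skew apolarity contracts a wedge containing $\alpha_l$ against $v_1 \wedge \cdots \wedge v_{\eta'_{i^*}}$; since all $v_m$ with $m \leq \eta'_{i^*} \leq h_j$ are killed by $\alpha_l$, every subset $R$ in the determinantal formula produces a determinant with an identically zero $\alpha_l$-row. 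That column pairing vanishes, and so does the whole apolarity.

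The main obstacle is the structural claim in step (b): that after the reshuffling $\sigma_T$, the row-symmetric product, and the column-antisymmetrization $b_\nu$ of Definition \ref{defmapmult}, each of the $r$ Case B columns really does end up with exactly one $\alpha_l$ inside its wedge. Once this bookkeeping is verified, the argument collapses to the purely combinatorial inequality $r > \lambda_{j+1}$, which is precisely the degree built into the generators by Definition \ref{subpoint}.
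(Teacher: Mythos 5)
Your step (a) is sound, and it is precisely the part the paper leaves to the reader (``Remark at first that the generators kill $p$''); the arithmetic is also right: the generator attached to $W_j^{\perp}$ has degree $r=\lambda_{j+1}+1$ while $\eta$ has only $\lambda_{j+1}$ columns of length $>h_j$, so the $r$-th column of $p$ is $v_1\wedge\cdots\wedge v_{h_j}$ and the contraction dies there. The genuine gap is exactly the ``structural claim'' you flag in step (b), and it is not routine bookkeeping. In Definition \ref{defmapmult} the product is pushed through the row symmetrizer before the column antisymmetrizer $b_{\nu}$, so when you expand $\mathcal{M}^{\rho,(r)}_{\nu}(g\otimes f)$ into decomposable elements of $\superwedge_{\nu'}V^*$, a covector $\alpha_l$ that was placed in a strip box of row $i$ gets shuffled into every other box of row $i$. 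Term by term it is therefore false that each Case B column carries an $\alpha_l$; one must prove that all the displaced terms cancel after $b_{\nu}$ (in small examples they do, e.g.\ because a column acquires a repeated entry or because of the internal symmetry of $\Sym^r W_j^{\perp}$, but you give no general argument, and the whole proof rests on it). A second, smaller gap: writing every element of $I(p)$ as a sum of terms $\mathcal{M}^{\rho,(r)}_{\nu}(g\otimes f)$ with $f$ a generator presupposes associativity and commutativity of the multiplication on $\mathbb{S}^{\bullet}V^*$, which you do not establish; the ideal is built by iterated products, and your Pieri analysis only applies when one factor is a single row.

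The paper takes a different route that avoids both issues: it shows (via the commutative diagram in its proof) that the Schur apolarity is compatible with the multiplication maps, i.e.\ $\varphi\bigl(\mathcal{M}^{\lambda,\mu}_{\nu}(g\otimes h)\otimes p\bigr)$ can be computed as $\varphi\bigl(h\otimes\varphi(g\otimes p)\bigr)$. Once any factor $g$ kills $p$, every further product kills $p$, with no tracking of where the covectors land, no horizontal-strip restriction, and immediate stability under iteration starting from the generators. I would either switch to that argument, or, if you want the direct combinatorial computation, actually prove the cancellation statement; as written the proof does not go through.
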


\begin{proof}
Remark at first that the generators kill $p$ via the Schur apolarity action. Then one has to prove that given $g \in I(p)$ such that $\varphi(g \otimes p)=0$, then the product of $g$ and $h$ is still apolar to $p$ for any $h \in \mathbb{S}^{\bullet} V^*$. Without loss of generality we can assume that $g \in \mathbb{S}_{\lambda}V^*$ and $h \in \mathbb{S}_{\mu} V^*$. Consider a partition $\nu$ such that $N^{\lambda,\mu}_{\nu} \neq 0$. Let us denote the element $\mathcal{M}^{\lambda,\mu}_{\nu}(g \otimes h)$ only with $g \cdot h$. Clearly if $\nu \not \subset \eta$, then $g \cdot h$ kills $p$ by Definition \ref{Schurapodef}. Otherwise, to prove that $g \cdot h$ kills $p$ is enough to recall the description of the multiplication maps given in Definition \ref{defmapmult}, and to consider the diagram \medskip

\begin{tikzcd}
                                                             & \mathbb{S}_{\lambda}V^* \otimes \mathbb{S}_{\mu}V^* \otimes \mathbb{S}_{\eta} V \arrow[ld] \arrow[rd] &                                                                                 \\
\mathbb{S}_{\nu} V^* \otimes \mathbb{S}_{\eta} V \arrow[rdd] &                                                                                                       & \mathbb{S}_{\mu}V^* \otimes \mathbb{S}_{\eta / \lambda} V \arrow[d]             \\
                                                             &                                                                                                       & \mathbb{S}_{\nu / \lambda} V^* \otimes \mathbb{S}_{\eta / \lambda} V \arrow[ld] \\
                                                             & \mathbb{S}_{\eta / \nu}V                                                                              &                                                                                
\end{tikzcd}

\noindent from which it follows $\varphi(g\cdot h \otimes p)=0$ by the fact that $\varphi( h \otimes \varphi(g \otimes p)) = \varphi( h \otimes 0) =0$ for any $h \in \mathbb{S}_{\mu}V^*$ from the hypothesis.
\end{proof}

\begin{remark}
Given any element $f \in \mathbb{S}_{\eta} V$, from the proof of Proposition \ref{PropIdealKill} it follows that $f^{\perp} \subset \mathbb{S}^{\bullet} V^*$ is an ideal. Indeed, one needs only to prove the following fact. Given any element $g \in f^{\perp}$, which without loss of generality we can assume that belongs to $f^{\perp} \cap \mathbb{S}_{\lambda} V^*$ for some partition $\lambda$, and given any element $h \in \mathbb{S}_{\mu}V^*$ for a partition $\mu$, the product

$$ g \cdot h := \mathcal{M}^{\lambda,\mu}_{\nu}(g \otimes h) \in \mathbb{S}_{\nu} V^*,$$
where we assume that $N^{\lambda,\mu}_{\nu} \neq 0$ and $\nu \subset \eta$, is still an element of $f^{\perp}$, i.e. it holds

$$\varphi(g \cdot h \otimes f)=0. $$
To prove this it is enough to apply verbatim the proof of Proposition \ref{PropIdealKill}.
\end{remark}

\begin{remark}
The choice of the integers appearing on the symmetric powers of the generators depends on the embedding of the variety. For instance consider the flag variety $\mathbb{F}(1,2,3;V)$ embedded with $\mathcal{O}(1,1,1) $ in $\mathbb{S}_{(3,2,1)} V$. Let $\{v_1,\dots,v_n\}$ and $\{x_1,\dots,x_n\}$ be bases of $V$ and $V^*$ respectively, dual each other. Consider the $(3,2,1)$-rank $1$ tensor 

$$ t = v_1 \wedge v_2 \wedge v_3 \otimes v_1 \wedge v_2 \otimes v_1. $$
According with the notation of Definition \ref{subpoint} we have that

$$W_1 = \langle v_1 \rangle \subset W_2 = \langle v_1, v_2 \rangle \subset W_3 = \langle v_1,v_2,v_3 \rangle$$
and
$$W_1^{\perp} =\langle x_2,\dots,x_n \rangle \supset W_2^{\perp} = \langle x_3,\dots,x_n \rangle \supset W_3^{\perp} = \langle x_3,\dots,x_n \rangle.$$
From Definition \ref{subpoint} the generators are

$$\Sym^1 W_3^{\perp},\ \Sym^2 W_2^{\perp}\ \text{and}\ \Sym^3 W_1^{\perp}.$$
Consider the element $x_2^2x_3 \in \Sym^3 W_1^{\perp}$. It is easy to see that $\varphi(t \otimes x_2^2x_3) = 0$ since either $x_2$ or $x_3$ is always evaluated on the part of $t$ representing the subspace $V_1$ of the flag. 

\noindent On the other hand, consider the same variety embedded with $\mathcal{O}(1,2,1) $ in $\mathbb{S}_{(4,3,1)} V$. The analogous element  to $t$ is

$$s = v_1 \wedge v_2 \wedge v_3 \otimes (v_1 \wedge v_2)^{\otimes 2} \otimes v_1. $$
Remark that the linear spaces $W_i$ and $W_i^{\perp}$ are the same as before, with $i = 1,2, 3$. However it is easy to see that the element $x_2^2x_3$ is no longer apolar to $s$, indeed

$$\varphi(s \otimes x_2^2x_3) = v_1 \wedge v_2 \otimes (v_1)^{\otimes 3}.$$
Therefore, we need to change the powers appearing on the generators as described in Definition \ref{subpoint} to get as generators the spaces

$$\Sym^1 W_3^{\perp},\ \Sym^2 W_2^{\perp}\ \text{and}\ \Sym^4 W_1^{\perp}.$$
This will allow us to evaluate elements of $W_1^{\perp}$ to the part of the tensor $s$ representing $W_1$.
\end{remark}

\begin{remark}[Restriction to the known apolarities]
Let $p = l^d \in \nu_d(\mathbb{P}^{n-1})$ be a point of a Veronese variety. The point $p$ represents a line contained in $V$ and hence the respective annihilator is generated by $n-1$ linear forms. Applying Definition \ref{subpoint} we get the ideal $I(p) \subset \mathbb{S}^{\bullet}V^*$. In particular remark that the multiplication maps $\Sym^d V^* \otimes \Sym^e V^* \longrightarrow \Sym^{d+e}V^*$ are involved in the definition. Hence it is not hard to check that the intersection $I(p) \cap \Sym^{\bullet} V$ is the usual ideal of the point $p$ contained in $\Sym^{\bullet} V^*$ used in the classic apolarity theory. See \cite{iarrobino1999power} for more details.

\noindent The same happens when we consider $p = v_1 \wedge \dots \wedge v_k \in \superwedge^k V$ a point of a Grassmannian $\mathbb{G}(k,V)$. Recall that such a point represents a $k$-dimensional subspace $W$ of $V$. The annihilator $W^{\perp}$ of $W$ is $(\dim(V)-k)$-dimensional. Applying Definition \ref{subpoint} we get the ideal $I(p)$ generated inside $\mathbb{S}^{\bullet}V^*$ using the maps $\mathcal{M}^{\lambda,\mu}_{\nu}$ introduced in Definition \ref{defmapmult}. In particular remark that the multiplication maps $\superwedge^d V^* \otimes \superwedge^e V^* \longrightarrow \superwedge^{d+e} V^*$ are involved in the definition. Hence the intersection $I(p) \cap \superwedge^{\bullet} V^*$ is the usual ideal of the point $p$ contained in $\superwedge^{\bullet} V^*$ used in the skew-symmetric apolarity theory. See \cite{arrondo2021skew} for more details.
\end{remark}

\begin{example}
Let $ V = \mathbb{C}^4$ and $\lambda = (2,2)$. The minimal orbit inside $\mathbb{P}(\mathbb{S}_{(2,2)} \mathbb{C}^4)$ is the Grassmann variety $X = (\mathbb{G}(2,\mathbb{C}^4),\mathcal{O}(2))$. Let $\{v_1,\dots,v_4 \}$ be a basis of $\mathbb{C}^4$ and $\{x_1,\dots,x_4\}$ be the respective dual basis of $(\mathbb{C}^4)^*$. Let $p = (v_1 \wedge v_2)^{\otimes 2} \in \mathbb{S}_{(2,2)} \mathbb{C}^4$ be a point of $(2,2)$-rank $1$. The element associated to the sstd tableau
\begin{center} \begin{ytableau} 1 & 1 \\ 2 & 2 \end{ytableau} \end{center}
and it represents the subspace $W_1$ spanned by $v_1$ and $v_2$. Hence the annihilator is spanned by $x_3$ and $x_4$. One can readily check that 

$$ I(p) \cap \mathbb{S}_{(1)} (\mathbb{C}^4)^* = \langle x_3,x_4 \rangle,$$
$$ I(p) \cap \mathbb{S}_{(1,1)} (\mathbb{C}^4)^* = \langle x_i \wedge x_j\ :\ j \in \{3,4\} \rangle,$$
$$ I(p) \cap \mathbb{S}_{(2)} (\mathbb{C}^4)^* = \langle x_i  x_j\ :\ j \in \{3,4\} \rangle$$
using the maps $\mathbb{S}_{(1)} (\mathbb{C}^4)^* \otimes \mathbb{S}_{(1)} (\mathbb{C}^4)^* \longrightarrow \mathbb{S}_{\mu}(\mathbb{C}^4)^*$, where $\mu = (2), (1,1)$, restricted to $\Sym^1 W_1^{\perp}$. 

\noindent Consider now $\mu = (2,1)$. We have that $I(p) \cap \mathbb{S}_{(2,1)} (\mathbb{C}^4)^*$ is given as the span of the images of the maps $\mathcal{M}^{(1),(2)}_{(1,1)}$ and $\mathcal{M}^{(1),(1,1)}_{(2,1)}$ restricted to $I(p)$ in one of the factors in the domain. For instance the map $\mathcal{M}^{(1),(1,1)}_{(2,1)}$ restricted to $\langle x_3, x_4 \rangle \otimes \mathbb{S}_{(1,1)} (\mathbb{C}^4)^*$ has as image

\begin{align*} \langle (x_i \wedge \alpha \otimes \beta + \beta \wedge \alpha \otimes x_i -&x_i \wedge \beta \otimes \alpha - \alpha \wedge \beta \otimes x_i,\ \\ 
&\text{for all}\ \alpha \wedge \beta \in \mathbb{S}_{(1,1)} (\mathbb{C}^4)^*,\ i = 3,\ 4 \rangle. \end{align*}
On the other hand, if we consider the same map restricted to $\mathbb{S}_{(1)} (\mathbb{C}^4)^* \otimes (I(p) \cap \mathbb{S}_{(1,1)} (\mathbb{C}^4)^* )$, we get as image 

$$ \langle \alpha \wedge \beta \otimes \gamma - \alpha \wedge \gamma \otimes \beta,\ \text{for all}\ \alpha \in V^*,\ \beta \wedge \gamma \in I(p) \cap \mathbb{S}_{(1,1)} (\mathbb{C}^4)^*  \rangle.$$
If one consider the span of all such images together with the ones obtained from the map $\mathcal{M}^{(1),(2)}_{(2,1)}$, one gets that

$$ I(p) \cap \mathbb{S}_{(2,1)} (\mathbb{C}^4)^* = \langle c_{(2,1)}(e_S)\ :\ S\ \text{is a sstd tableau in which an entry is 3 or 4} \rangle.$$
Analogously one can compute that

$$ I(p) \cap \mathbb{S}_{(2,2)} (\mathbb{C}^4)^* = \langle c_{(2,2)}(e_S)\ :\ S\ \text{is a sstd tableau in which an entry is 3 or 4} \rangle.$$
Note that all the elements in such spaces kill $p$ via the Schur apolarity action.
\end{example}
\smallskip

\begin{example}
Let $ V = \mathbb{C}^3$ and $\lambda = (2,1)$. The minimal orbit inside $\mathbb{P}(\mathbb{S}_{(2,1)} \mathbb{C}^3)$ is the Flag variety $X = (\mathbb{F}(1,2;\mathbb{C}^3),\mathcal{O}(1,1))$. Let $\{v_1,\dots,v_3 \}$ be a basis of $\mathbb{C}^3$ and $\{x_1,\dots,x_3\}$ be the respective dual basis of $(\mathbb{C}^3)^*$. Let $p = v_1 \wedge v_2 \otimes v_1 \in \mathbb{S}_{(2,1)} \mathbb{C}^3$ be a point of $(2,1)$-rank $1$. The element associated to the sstd tableau
\begin{center} \begin{ytableau} 1 & 1 \\ 2 \end{ytableau} \end{center}
and it represents the line $W_1$ generated by $v_1$ contained in the subspace $W_2$ spanned by $v_1$ and $v_2$. Hence the annihilators are $W_1^{\perp} = \langle x_2,\ x_3 \rangle$ and $W_2^{\perp} = \langle x_3 \rangle$. In this case the generators are
$$\Sym^1 W_2^{\perp} = \langle x_3 \rangle\ \text{and}\ \Sym^2 W_1^{\perp} = \langle x_2^2, x_2x_3, x_3^2 \rangle. $$
For any $\mu \subset (2,1)$ we can check that the subspace $I(p)$ associated to $p$ is such that

$$ I(p) \cap \mathbb{S}_{(1)} (\mathbb{C}^3)^* = \langle x_3 \rangle,$$
$$ I(p) \cap \mathbb{S}_{(1,1)} (\mathbb{C}^3)^* = \langle x_1 \wedge x_3, x_2 \wedge x_3 \rangle,$$
$$ I(p) \cap \mathbb{S}_{(2)} (\mathbb{C}^3)^* = \langle x_1 x_3, x_2 x_3, x_3^2, x_2^2 \rangle,$$
$$ I(p) \cap \mathbb{S}_{(2,1)} (\mathbb{C}^3)^* = \langle c_{(2,1)}(e_S)\ :\ S\ \text{is a sstd tableau in which an entry is 3} \rangle.$$
\end{example} \smallskip

As recalled in the Introduction, in the classic, skew-symmetric respectively, apolarity theory, a result called {\it apolarity lemma}, {\it skew-symmetric apolarity lemma} respectively, links the symmetric, skew-symmetric respectively, rank of a point with the equivalent condition of the inclusion of the ideal of points of rank $1$ inside the apolar set of the point. We now approach to an analogous result called {\it lemma of Schur apolarity}, cf. Theorem \eqref{LemmaSchurApo}. At first we need a preparatory lemma.

\begin{lemma} \label{topdeg}
Let $\lambda$ be a partition of length less then $n$. Let $p \in X \subset \mathbb{P}(\mathbb{S}_{\lambda} V)$ be a point of $\lambda$-rank $1$. Then we have the equality 

$$ I(p)_{\lambda} = (p^{\perp})_{\lambda} $$

\noindent where $p^{\perp}$ is introduced in Definition \ref{schurcatdef}, and where $I(p)_{\lambda}$ and $(p^{\perp})_{\lambda}$ denotes $I(p) \cap \mathbb{S}_{\lambda} V^*$ and $(p^{\perp})\cap \mathbb{S}_{\lambda} V^*$ respectively. 
\end{lemma}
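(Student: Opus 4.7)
The containment $I(p)_\lambda \subset (p^\perp)_\lambda$ is Proposition \ref{PropIdealKill} applied in degree $\lambda$. For the reverse inclusion my plan is to count the codimension of $(p^\perp)_\lambda$ and then to realize every missing direction of $\mathbb{S}_\lambda V^*$ inside $I(p)_\lambda$. First I would observe that the catalecticant $\mathcal{C}^{\lambda,\lambda}_p \colon \mathbb{S}_\lambda V^* \to \mathbb{S}_{\lambda/\lambda}V = \mathbb{K}$ is nothing but the Schur pairing with the nonzero element $p$, hence is a nonzero linear form; therefore $(p^\perp)_\lambda$ has codimension exactly one in $\mathbb{S}_\lambda V^*$ and the lemma reduces to showing that $I(p)_\lambda$ also has codimension at most one.

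To identify $(p^\perp)_\lambda$ concretely, I would use the semistandard basis $\{c_\lambda(x_S) : S \text{ sstd}\}$ of Proposition \ref{BaseSchur} together with the columnwise description of the Schur apolarity from Definition \ref{Schurapodef} via the inclusion $\mathbb{S}_\lambda V^* \subset \superwedge_{\lambda'} V^*$. Since by Proposition \ref{hwvSchur} the highest weight vector $p$ splits columnwise as $\bigotimes_j (v_1 \wedge \dots \wedge v_{\lambda'_j})$, the scalar $\mathcal{C}^{\lambda,\lambda}_p(c_\lambda(x_S))$ factors as a sum of products of determinantal pairings, one per column, each vanishing unless the column entries equal $\{1,\dots,\lambda'_j\}$. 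Column-strictness of sstd tableaux then forces $S(i,j) = i$ at every cell, so the pairing with $p$ is nonzero if and only if $S = U$, the tableau with $i$'s in row $i$. Thus $(p^\perp)_\lambda$ admits the basis $\{c_\lambda(x_S) : S \text{ sstd},\ S \neq U\}$, and it remains to show that each such $c_\lambda(x_S)$ lies in $I(p)_\lambda$.

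Given a sstd tableau $S \neq U$, column-strictness propagates any offending entry $S(i,j) > i$ all the way to the bottom of the column, so some column $j$ has bottom entry $m := S(\lambda'_j, j)$ with $m > \lambda'_j$. Writing $h_r := \lambda'_j$ so that the column height equals the flag-level $h_r$, we obtain $x_m \in W_r^\perp$. I would then exhibit $c_\lambda(x_S)$ as the image of a suitable tensor $g \otimes h$ under an iterated multiplication map, with $g \in \Sym^{\lambda_{r+1}+1} W_r^\perp$ a generator of $I(p)$ built around $x_m$ (as prescribed by Definition \ref{subpoint}) and $h \in \mathbb{S}_\mu V^*$ a complementary Schur element of the appropriate shape $\mu$, assembled via the multiplication maps of Definition \ref{defmapmult} and the row-by-row trimming of Remark \ref{trim}. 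The main obstacle lies precisely in this realization: when $r < k$ the prescribed symmetric degree $\lambda_{r+1}+1$ on $W_r^\perp$ is greater than one, so $x_m$ alone is not a generator and the sstd basis element $c_\lambda(x_S)$ cannot be obtained from a single column-wedge carrying $x_m$; a careful combinatorial case analysis on the filling of $S$ is needed to construct the multiplication that recovers $c_\lambda(x_S)$ from a genuine generator of the ideal.
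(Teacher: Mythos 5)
Your skeleton coincides with the paper's: the inclusion $I(p)_\lambda \subset (p^\perp)_\lambda$ comes from Proposition \ref{PropIdealKill}, one reduces to the highest weight vector $p$, observes that $\mathcal{C}^{\lambda,\lambda}_p$ is a nonzero linear form so that $(p^\perp)_\lambda$ is the hyperplane spanned by $\{c_\lambda(x_S) : S \ \text{sstd},\ S \neq U\}$, and one concludes by a dimension count once every such $c_\lambda(x_S)$ is shown to lie in $I(p)_\lambda$. All of this is correct and is exactly how the paper sets up the argument.

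However, the last step is the actual content of the lemma, and your proposal stops precisely there: you correctly locate the difficulty (when the offending entry $m$ of $S$ satisfies $h_{r} < m \leq h_{r+1}$ with $r<k$, the generator attached to $W_r^\perp$ sits in symmetric degree $d_k+\dots+d_{r+1}+1 > 1$, so a single stray variable $x_m$ does not by itself produce an element of $I(p)$), but you do not resolve it, deferring to ``a careful combinatorial case analysis''. The paper's proof is essentially that case analysis. It splits into two situations: if $S$ contains an entry $m > h_k$, then $x_m \in \Sym^1 W_k^\perp$ is already a generator and one peels off the row of $S$ containing $m$ via the trimming maps of Remark \ref{trim} (each with Littlewood--Richardson coefficient $1$), exhibiting $c_\lambda(x_S)$ as a product with that row, which factors through the generator. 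If instead all entries of $S$ are at most $h_k$, the lower rectangles of $S$ agree with those of $U$ and the discrepancy lives in an upper rectangle; here the paper uses the key structural observation that, because the lower rectangle of $U$ has its $i$-th row filled with $i$'s, all cross-terms of the row symmetrization between the two rectangles either reproduce the same tableau or die after column skew-symmetrization. This is what allows one to rearrange and collect enough repeated entries from $W_r^\perp$ into a single row, which is then a product of an element of $\Sym^{d_k+\dots+d_{r+1}+1} W_r^\perp$ with further linear forms, i.e.\ an element of $I(p)$. Without this vanishing-of-cross-terms argument and the explicit sequence of factorizations it enables, the reverse inclusion $(p^\perp)_\lambda \subset I(p)_\lambda$ is not established, so the proposal as written has a genuine gap at the heart of the proof.
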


\begin{proof}
We present here a proof only for the case in which $\lambda$ is a partition such that its diagram is union of two rectangles, being the general case identical but more cumbersome in terms of notation. 

\noindent Assume that $\lambda$ is given by the union of two rectangles. This means that $\lambda = ((d+e)^k,e^{h-k})$, where $d,e >0$ and $0<k<h$. The minimal orbit $X$ is the Flag variety $(\mathbb{F}(k,h;V),\mathcal{O}(d,e))$ and a point $p \in X$ has the form

$$ p = (v_1 \wedge \dots \wedge v_{k})^{\otimes e} \otimes (v_1 \wedge \dots \wedge v_{h})^{\otimes d} $$

\noindent and we may also assume that it is the highest weight vector of the irreducible representation $\mathbb{S}_{\lambda} V$. The sstd tableau $U$ representing $p$ is

\begin{center}$U=$ \begin{ytableau} 1 & \dots & \dots & \dots & \dots & \dots & 1 \\
\vdots & \vdots & \vdots & \vdots & \vdots & \vdots & \vdots \\
k & \dots & \dots & \dots & k & \dots & k \\
\vdots & \vdots & \vdots & \vdots \\
h & \dots & \dots & h
\end{ytableau} . \end{center}

\noindent The flag represented by $p$ is

$$W_1 = \langle v_1,\dots,v_k\rangle \subset W_2 = \langle v_,\dots,v_h \rangle$$
and the respective annihilators are 

$$ W_2^{\perp} = \langle x_{h+1},\dots,x_n \rangle \subset W_1^{\perp} = \langle x_{k+1},\dots,x_n \rangle. $$
By Definition \ref{subpoint} the ``generators'' of $I(p)$ are $\Sym^1 W_2^{\perp}$ and $\Sym^{e+1} W_1^{\perp}$.

\noindent It is easy to see that $(p^{\perp})_{\lambda}$ is a hyperplane in $\mathbb{S}_{\lambda}V^*$ since the catalecticant pairing is not trivial. Moreover it is easy to see that it is generated by all those elements of the basis whose associated sstd tableau of shape $\lambda$ is different from $U$. Fix one of these elements and call $S$ the associated sstd tableau. If we prove that $c_{\lambda}(v_S)$ is in $I(p)$, then we obtain that $I(p)_{\lambda} \supset (p^{\perp})_{\lambda}$. The equality follows for dimensional reasons. 

\noindent We discriminate some cases. At first assume that $S$ contains an integer from $h+1$ to $n$, meaning that in the respective element some of the equations $x_{h+1},\dots,x_n$ appears. Remark that such an element belong to the generator $\Sym^1 W_2^{\perp}$. Assume that the such integer appears in the $(h-k) \times e$ rectangle in the bottom, precisely in the $i$-th row of such rectangle. Call $R$ such row. Using the map

$$ \mathcal{M}^{\lambda_1,\lambda_2}_{\lambda} : \mathbb{S}_{\lambda_1} V^* \otimes \mathbb{S}_{\lambda_2} V^* \longrightarrow \mathbb{S}_{\lambda} V^*$$
where $\lambda_1 = ((d+e)^k,e^i)$ and $\lambda_2 = (e^{h-k-i})$, we can see $c_{\lambda}(v_S)$ as image of the product $c_{\lambda_1}(v_{S_1}) \otimes c_{\lambda_2}(v_{S_2})$, where $S_1$ and $S_2$ are the sstd tableaux of shape $\lambda_1$ and $\lambda_2$ respectively obtained by separating the sstd tableau $S$ at $R$. We claim that such $c_{\lambda_1}(v_{S_1})$ belongs to $I(p)$. To see this it is enough to consider the map

$$ \mathcal{M}^{\lambda_{1,1},(e)}_{\lambda_1} : \mathbb{S}_{\lambda_{1,1}} V^* \otimes \mathbb{S}_{(e)} V^* \longrightarrow \mathbb{S}_{\lambda_1} V^* $$
where $\lambda_{1,1}$ is $\lambda_1$ with the bottom line removed, since the element $c_{\lambda_1} (v_{S_1})$ can be obtained as the image of $c_{\lambda_{1,1}}(v_{S_{1,1}}) \otimes c_{(e)} (v_R)$. The claim follows since we can easily see that $c_{(e)}(v_R) \in I(p)$. Pictorially the last map acts as

$$ \ydiagram{6,6,6,3,3} \otimes \ydiagram{3} \longrightarrow 
\begin{ytableau} *(white) & *(white) & *(white) & *(white) & *(white) & *(white) \\ 
*(white) & *(white) & *(white) & *(white) & *(white) & *(white) \\
*(white) & *(white) & *(white) & *(white) & *(white) & *(white) \\
*(white) & *(white) & *(white)  \\
*(white) & *(white) & *(white)  \\
\star & \star & \star 
\end{ytableau}$$
The case in which the integer from $h+1,\dots,n$ appears in upper $k \times (d+e)$ rectangle can be treated in the same way. Indeed, it is enough ``remove '' the bottom $(h-k) \times e$ rectangle and then a similar argument applies.

\noindent Suppose now that none of the integers $h+1,\dots,n$ appears in $S$. This means that the $h \times e$ rectangle  of $S$ on the left is equal to the one of $U$. On the other hand, the $k \times d$ rectangle on the right must differ from the same rectangle contained in $U$. Hence at least one integer from $k+1$ to $h$ appears in the $k \times d$ rectangle on the right. Observe now that following the construction of this element via the Young symmetrizer, some symmetrizations along rows are prescribed. Indeed note that permutations that fix the rectangle $k \times d$ on the right, fix also the other rectangle since this one has the $i$-th row filled with $i$'s. Moreover, if a permutation exchange an element from the rectangle on the right with the one on the left we get only two situations. If we exchange two equal numbers, we get the same picture we started with; otherwise after skew-symmetrizing along columns we get the zero element. This happens since we will get two equal numbers in the same column due to the fact that all the integers of the second rectangle are contained in the first one. For example consider the sstd tableau

\begin{center} $S=$\begin{ytableau} *(gray) 1 & *(gray) 1 & 1 & 2 \\ *(gray) 2 & *(gray) 2 & 2 & 3 \\ *(gray) 3 & *(gray) 3 \\ *(gray) 4 & *(gray) 4  
\end{ytableau} , \end{center} 

\noindent where we have coloured in different ways the rectangles we have referred to so far. Then whenever we exchange integers along rows between the two rectangles we obtain either a diagram with an integer repeated at least twice in a column, or we get that the first rectangle is the same. Then using the Young symmetrizer, the first one goes to zero. This allows us to perform the following construction. Select an integer that appears in the $k \times d$ rectangle on the right of $S$ which is not in the $k \times d$ rectangle of $U$. For instance if $S$ is as above, in that case $U$ is

\begin{center} $U=$\begin{ytableau} 1 & 1 & 1 & 1 \\ 2 & 2 & 2 & 2 \\ 3 & 3 \\ 4 & 4  
\end{ytableau}\ , \end{center} 

\noindent and we may consider the number $3$ since it appears in the $2 \times 2$ rectangle on the right of $S$ but not in the same rectangle of $U$. Coming back to $S$, we can see the element $c_{\lambda}(v_S)$ as image of a series of products. At first we have to trim the rows of $S$ which do not contain the chosen integer in the $h-k$ bottom rows of the $h \times e$ rectangle on the left. For instance in our case we get that $c_{\lambda}(v_S)= \mathcal{M}(c_{(4,4,2)} (v_{S_1}) \otimes c_{(2)} (v_{S_2}))$, where $S_1$ and $S_2$ are the two tableaux

\begin{center}$S_1 =$ \begin{ytableau} 1 & 1 & 1 & 2 \\ 2 & 2 & 2 & 3 \\ 3 & 3 
\end{ytableau}\ , $\qquad S_2=$ \begin{ytableau} 4 & 4 \end{ytableau}\ . \end{center} 
We can now notice that $c_{(4,4,2)}(v_S)$ is the image $\mathcal{M}^{(4^2),(2)}_{(4,4,2)}(c_{(4^2)}(v_{S_3}) \otimes c_{(2)}(v_{S_4}))$, where $S_3$ and $S_4$ are the tableaux

\begin{center} $S_3= $ \begin{ytableau} 1 & 1 & 1 & 2 \\ 2 & 3 & 3 & 3 
\end{ytableau}\ ,  $\qquad S_4=$ \begin{ytableau} 2 & 2 \end{ytableau}\ .\end{center}
Now proceeding as in the rectangular case on can see that the element $c_{(4^2)}(v_{S_3})$ belong to $I(p)$. Indeed it is given by the product $-\mathcal{M}^{(4),(4)}_{(4^2)}(c_{(4)}(v_{S_5}) \otimes c_{(4)}(v_{S_6}))$, where $S_5$ and $S_6$ are the tableaux

\begin{center} $S_5 =$ \begin{ytableau}   2 & 3 & 3 & 3 
\end{ytableau}\ , $\qquad S_6 = $ \begin{ytableau} 1 & 1 & 1 & 2 \end{ytableau} \end{center}
One can see that $c_{(4)}(v_{S_3})$ is an element of $I(p)$ by Definition \ref{subpoint} and we can conclude that $c_{\lambda}(v_S)$ belongs to $I(p)$.

\noindent The general case, i.e. when more than $2$ subspaces are involved follows exactly as the cases discussed above. Also in this case we may choose $p$ as the highest weight vector of the representation, whose associated sstd tableau of shape $\lambda$ is $U(\lambda)$, and $(p^{\perp})_{\lambda}$ is generated by all $c_{\lambda}(v_S)$ where $S$ is a sstd tableau of shape $\lambda$ different from $U$. Then one proves the inclusion $I(p)_{\lambda} \supset (p^{\perp})_{\lambda}$ and the equality will follow for dimensional reasons. Eventually one has to discuss several cases divided on which integers appear in $S$. This concludes the proof.
\end{proof}

\noindent We are now ready to state the main result of the work.

\begin{theo}[Lemma of Schur apolarity] \label{LemmaSchurApo}
Let $\lambda$ be a partition of length less than $n$ and let $\mathbb{S}_{\lambda}V$ be an irreducible representation of $SL(n)$ together with the respective minimal orbit $X\subset\mathbb{P}(\mathbb{S}_{\lambda}V)$. Let also $p_1,\dots,p_r \in X$. Then the following are equivalent:
\begin{enumerate}
\item[$(1)$] we have that $f \in \langle p_i,\ 1 \leq i \leq r\rangle$,
\item[$(2)$] we have the inclusion $I(p_1,\dots,p_r) \subset f^{\perp}$, where $I(p_1,\dots,p_r)$ is the ideal $\bigcap_{i=1}^r I(p_i)$.
\end{enumerate}
\end{theo}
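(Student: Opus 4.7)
The plan is to treat the two implications separately. The direction $(1) \Rightarrow (2)$ will be essentially immediate from Proposition \ref{PropIdealKill} and linearity, while $(2) \Rightarrow (1)$ will reduce to a duality argument in the single graded piece $\mathbb{S}_\lambda V^*$, powered by the top-degree equality of Lemma \ref{topdeg}.

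For $(1) \Rightarrow (2)$, I would write $f = \sum_{i=1}^r c_i p_i$ with $c_i \in \mathbb{K}$ and take an arbitrary homogeneous $g \in I(p_1,\dots,p_r) = \bigcap_i I(p_i)$, say with $g \in \mathbb{S}_\mu V^*$ for some $\mu \subset \lambda$. By Proposition \ref{PropIdealKill} each $p_i$ is killed by $g$ via $\varphi$. Since $\varphi$ is bilinear, $\varphi(f \otimes g) = \sum_i c_i\, \varphi(p_i \otimes g) = 0$, so $g \in f^\perp$. Extending by linearity over all graded pieces gives $I(p_1,\dots,p_r) \subset f^\perp$.

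For $(2) \Rightarrow (1)$, the key observation is that the catalecticant map at the top degree
$$\mathcal{C}^{\lambda,\lambda}_f \colon \mathbb{S}_\lambda V^* \longrightarrow \mathbb{S}_{\lambda/\lambda}V = \mathbb{K}$$
is nothing but the evaluation $g \mapsto \langle f, g\rangle$ coming from the perfect $SL(V)$-invariant pairing $\mathbb{S}_\lambda V \otimes \mathbb{S}_\lambda V^* \to \mathbb{K}$. Hence $(f^\perp)_\lambda$ is precisely the hyperplane in $\mathbb{S}_\lambda V^*$ annihilating $f$ under this pairing, and for each rank-one point $p_i$ the subspace $(p_i^\perp)_\lambda$ is the hyperplane annihilating $p_i$. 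By Lemma \ref{topdeg} we have $I(p_i)_\lambda = (p_i^\perp)_\lambda$ for every $i$, and intersecting yields
$$I(p_1,\dots,p_r)_\lambda = \bigcap_{i=1}^r I(p_i)_\lambda = \bigcap_{i=1}^r (p_i^\perp)_\lambda = \langle p_1,\dots,p_r\rangle^{\perp},$$
where the last orthogonal complement is taken inside $\mathbb{S}_\lambda V^*$ with respect to the natural pairing.

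Now the hypothesis $I(p_1,\dots,p_r) \subset f^\perp$, restricted to degree $\lambda$, gives
$$\langle p_1,\dots,p_r\rangle^\perp \subset \langle f\rangle^\perp$$
inside $\mathbb{S}_\lambda V^*$. Since $\mathbb{S}_\lambda V$ is finite-dimensional and the pairing is perfect, taking orthogonals reverses inclusions and is an involution on subspaces, so
$$\langle f \rangle = \langle f\rangle^{\perp\perp} \subset \langle p_1,\dots,p_r\rangle^{\perp\perp} = \langle p_1,\dots,p_r\rangle,$$
which is statement $(1)$. The main obstacle is really encapsulated already in Lemma \ref{topdeg}: once we know that $I(p_i)$ saturates up to the whole hyperplane $(p_i^\perp)_\lambda$ in top degree, the rest of the Schur apolarity lemma is pure linear duality; the role of the lower-degree part of $I(p_1,\dots,p_r) \subset f^\perp$ in $(2)$ is only to \emph{imply} the top-degree inclusion, which is all that is required to recover $f \in \langle p_1,\dots,p_r\rangle$.
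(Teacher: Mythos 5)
Your proposal is correct and follows essentially the same route as the paper's own proof: the forward implication by bilinearity from Proposition \ref{PropIdealKill}, and the converse by restricting to the top graded piece $\mathbb{S}_\lambda V^*$, invoking Lemma \ref{topdeg} to identify $I(p_i)_\lambda$ with $(p_i^\perp)_\lambda$, and then applying orthogonal complements with respect to the perfect pairing $\mathbb{S}_\lambda V \otimes \mathbb{S}_\lambda V^* \to \mathbb{K}$. Your explicit identification of $\bigcap_i (p_i^\perp)_\lambda$ with $\langle p_1,\dots,p_r\rangle^\perp$ is just a slightly more spelled-out version of the paper's final duality step.
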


\begin{proof}
We prove the two implications separately. Assume at first that $f = c_1 p_1 + \dots + c_r p_r$. Since the elements of the subspace $I(p_i)$ kill $p_i$ via Schur apolarity action for every $p_i$, we get that every element $I(p_i,\dots,p_r) = \bigcap_{i=1}^r I(p_i)$ kills every element in $\langle p_1,\dots,p_r \rangle$, and hence kills $f$.

\noindent Assume now that the inclusion $I(p_1,\dots,p_r) \subset f^{\perp}$ holds. Using the notation of Lemma \ref{topdeg}, this clearly implies that $I(p_1,\dots,p_r)_{\lambda} \subset (f^{\perp})_{\lambda}$. By Lemma \ref{topdeg} this means that
\begin{equation} \label{lemapo} I(p_1,\dots,p_r)_{\lambda} = \bigcap_{i=1}^r I(p_i)_{\lambda} = \bigcap_{i=1}^r (p_i^{\perp})_{\lambda} \subset (f^{\perp})_{\lambda}. \end{equation}
We may consider then the restricted Schur apolarity action
$$ \varphi : \mathbb{S}_{\lambda} V \otimes \mathbb{S}_{\lambda} V^* \longrightarrow \mathbb{C} $$
which is clearly a perfect pairing. Moreover the induced catalecticant map $\mathcal{C}^{\lambda,\lambda}_g$ is such that $\ker \mathcal{C}^{\lambda,\lambda}_g = (g^{\perp})_{\lambda}$, for any $g \in \mathbb{S}_{\lambda} V$. Hence we may interpret the hyperplane $(g^{\perp})_{\lambda}$ as the set of linear forms which vanish on the line $\langle g \rangle$. Hence applying the orthogonal to \eqref{lemapo}, i.e. on the both sides of

$$ \bigcap_{i=1}^r (p_i^{\perp})_{\lambda} \subset (f^{\perp})_{\lambda} $$
and uusing that the pairing is non-degenerate, we get that $f \in \langle p_1,\dots,p_r \rangle$. This concludes the proof.
\end{proof} \medskip

\begin{example}
Consider the complete flag variety $\mathbb{F}(1,2,3; \mathbb{C}^4)$ embedded with $\mathcal{O}(1,1,1)$ in $\mathbb{P} (\mathbb{S}_{(3,2,1)} \mathbb{C}^4)$ and consider the element

$$ t = v_1 \wedge v_2 \wedge v_3 \otimes v_1 \wedge v_2 \otimes v_3 - v_1 \wedge v_2 \wedge v_3 \otimes v_2 \wedge v_3 \otimes v_1 \in \mathbb{S}_{(3,2,1)} \mathbb{C}^4. $$
We would like to compute the $\lambda = (3,2,1)$-rank of $t$. We can easily exclude that $t$ has not rank $1$. Indeed it is easy to see that for any tensor $p$ of $\lambda$-rank $1$, the rank of the catalecticant map $\mathcal{C}^{(3,2,1),(2)}_p$ must be $5$. Since the rank of $\mathcal{C}^{(3,2,1),(2)}_t$ is $6$ we can already state that $t$ has not $\lambda$-rank $1$.  

\noindent Consider the two points $t_1$ and $t_2$ of $\lambda$-rank $1$ that represents the flags

$$\langle v_1+v_3 \rangle \subset \langle v_2,v_1+v_3 \rangle \subset \langle v_2,v_3,v_1+v_3 \rangle$$
$$\langle v_1-v_3 \rangle \subset \langle v_2,v_1-v_3 \rangle \subset \langle v_2,v_3,v_1-v_3 \rangle.$$
Let us call the respective subspaces $(V_1,V_2,V_3)$ and $(W_1,W_2,W_3)$. The respective orthogonal spaces are

$$ \langle x_4 \rangle \subset \langle x_4,x_1-x_3 \rangle \subset \langle x_4,x_1-x_3,x_2 \rangle $$
$$ \langle x_4 \rangle \subset \langle x_4,x_1+x_3 \rangle \subset \langle x_4,x_1+x_3,x_2 \rangle. $$
We call them $(V_3^{\perp},V_2^{\perp},V_1^{\perp})$ and $(W_3^{\perp},W_2^{\perp},W_1^{\perp})$ respectively. We can note that the following inclusions holds 

$$V_3^{\perp} \cap W_3^{\perp} = V_3^{\perp} = W_3^{\perp} \subset \ker \mathcal{C}^{(3,2,1),(1)}_t,$$ 
$$ \Sym^{2} V_2^{\perp}\cap \Sym^2 W_2^{\perp} = \langle x_4^2 \rangle \subset \ker \mathcal{C}^{(3,2,1),(2)}_t, $$
$$ \Sym^{3} V_1^{\perp}\cap \Sym^3 W_1^{\perp} = \langle x_4^3, x_2x_4^2,x_2^2x_4,x_2^3 \rangle \subset \ker \mathcal{C}^{(3,2,1),(3)}_t. $$
Moreover one can see after some computations that also the inclusion $I(p_1,p_2) \subset t^{\perp}$ holds. Hence by Theorem \ref{LemmaSchurApo} we get that $t$ can be written as a linear combination of $t_1$ and $t_2$. Solving a simple linear system we get that 

\begin{align*}
t = \frac{1}{2} (v_1 + v_3) \wedge v_2 \wedge v_3 &\otimes (v_1 + v_3) \wedge v_2 \otimes (v_1 + v_3) + \\
&- \frac{1}{2} (v_1 - v_3) \wedge v_2 \wedge v_3 \otimes (v_1 - v_3) \wedge v_2 \otimes (v_1 - v_3).
\end{align*}

\end{example}

\section{Catalecticants and $\lambda$-rank} \label{quintasez} \setcounter{equation}{0} \bigskip

This section is devoted to exploit the link between $\lambda$-rank $1$ tensors and the rank of catalecticant maps. In particular we will see that having $\lambda$-rank $1$ will be equivalent to specific ranks of the catalecticant maps. A disclaimer has to be made. Of course one can use the equations of these varieties to test if the point has $\lambda$-rank $1$. However, the goal of this study is to get a prediction on the $\lambda$-rank of a tensor in terms of the rank of specific catalecticant maps if possible. \smallskip

Firstly, let us consider the case $X = (\mathbb{G}(k,V),\mathcal{O}(d))$ embedded in $\mathbb{P}(\mathbb{S}_{(d^k)}V)$. Recall that the points of $X$ represent $k$-dimensional linear subspaces of $V$ counted $d$ times, i.e. something like

$$ (v_1 \wedge \dots \wedge v_k)^{\otimes d}  \in X $$
for some $v_1,\dots,v_k \in V$. Fix a point $p \in X$. In particular we may choose the highest weight vector of the representation so that we get also a description of $p$ in terms of a std Young tableau. For any $\mu \subset (d^k)$ we get a catalecticant map 
$$\mathcal{C}^{(d^k),\mu}_p : \mathbb{S}_{\mu} V^* \longrightarrow \mathbb{S}_{(d^k) / \mu} V. $$
If $d$ and $k$ are sufficiently large we get plenty of these maps and an analysis of all of them is not recommended. Hence we start with a couple of examples to make the situation clear and then we move to the general case.
\smallskip

Consider the variety $X = (\mathbb{G}(2,\mathbb{C}^4),\mathcal{O}(2))$ embedded in $\mathbb{P}(\mathbb{S}_{(2,2)}\mathbb{C}^4) \simeq \mathbb{P}^{19}$. Take $p \in X$ as the highest weight vector of $\mathbb{S}_{(2,2)}\mathbb{C}^4$. The partitions $\mu \subset (2,2)$ are only $\mu = (1)$, $(1,1)$, $(2)$, $(2,1)$ and $(2,2)$. We describe some of the related catalecticant maps. We start with $\mu = (1)$.

\begin{prop}
Let $p \in \mathbb{P}(\mathbb{S}_{(2,2)}\mathbb{C}^4)$. Then $p$ has $(2,2)$-rank $1$ if and only if $\operatorname{rk}(\mathcal{C}^{(2,2),(1)}_p) = 2$. 
\end{prop}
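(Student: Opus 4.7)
The plan is to prove both directions using the Schur apolarity lemma together with a direct calculation of the catalecticant on a rank-$1$ tensor.

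For the easy direction ($(2,2)$-rank $1$ $\Rightarrow$ $\operatorname{rk}(\mathcal{C}^{(2,2),(1)}_p)=2$), I would pick a basis and work with the highest weight vector $p = (v_1\wedge v_2)^{\otimes 2} \in \bigwedge^{2}V\otimes\bigwedge^{2}V \supset \mathbb{S}_{(2,2)}V$ without loss of generality, since $SL(V)$ acts transitively on the rank-$1$ locus and the catalecticant transforms equivariantly. Since $\mu=(1)$ has $\mu'=(1)$, Definition \ref{Schurapodef} tells me that the skew apolarity action only contracts on the first exterior factor, so
$$\mathcal{C}^{(2,2),(1)}_p(\alpha) = \bigl(\alpha(v_1)v_2 - \alpha(v_2)v_1\bigr)\otimes (v_1\wedge v_2).$$
As $\alpha$ ranges over $V^*$, the image is spanned by $v_1\otimes(v_1\wedge v_2)$ and $v_2\otimes(v_1\wedge v_2)$, which are linearly independent in $V\otimes\bigwedge^{2}V$; hence the rank is exactly $2$ and the kernel is the $2$-dimensional space $\langle v_1,v_2\rangle^{\perp}$.

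For the converse, suppose $\operatorname{rk}(\mathcal{C}^{(2,2),(1)}_p) = 2$. Then $\ker\mathcal{C}^{(2,2),(1)}_p$ is a $2$-dimensional subspace of $V^*$, so it equals $W^{\perp}$ for a unique $2$-dimensional subspace $W\subset V$. Let $w_1,w_2$ be a basis of $W$ and set $q=(w_1\wedge w_2)^{\otimes 2}$, which is a $(2,2)$-rank $1$ element representing the subspace $W$. By Definition \ref{subpoint} applied to $\lambda=(2^2)$ (where $k=1$, $h_1=2$, $d_1=2$), the ideal $I(q)$ has a single generator space $\operatorname{Sym}^{1}W^{\perp}=W^{\perp}$; that is, $I(q)$ is the ideal of $\mathbb{S}^{\bullet}V^*$ generated by $W^{\perp}$.

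Now I invoke the remark after Proposition \ref{PropIdealKill}: the apolar set $p^{\perp}\subset\mathbb{S}^{\bullet}V^*$ is an ideal. Since $W^{\perp}=\ker\mathcal{C}^{(2,2),(1)}_p\subset p^{\perp}$ in degree $1$, and since $p^{\perp}$ is closed under the multiplication maps $\mathcal{M}^{\lambda,\mu}_{\nu}$, the ideal generated by $W^{\perp}$ lies inside $p^{\perp}$. This gives $I(q)\subset p^{\perp}$. Applying the Schur apolarity lemma (Theorem \ref{LemmaSchurApo}) with $r=1$ yields $p\in\langle q\rangle$, so $p$ is a scalar multiple of a $(2,2)$-rank $1$ tensor and thus has rank $1$.

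The main obstacle I anticipate is purely bookkeeping: making the direct computation of $\mathcal{C}^{(2,2),(1)}_p(\alpha)$ via Definition \ref{Schurapodef} match the correct ambient $\bigwedge$-factor (since one must track which copy of $\bigwedge^{2}V$ is contracted and then confirm the output lies in $\mathbb{S}_{(2,1)}V$ by Proposition \ref{imgSchurAction}). The conceptual content, namely reducing the converse to the apolarity lemma, is immediate once one observes that $p^{\perp}$ is an ideal.
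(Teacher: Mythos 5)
Your proposal is correct, and the forward direction is exactly the paper's computation. The converse, however, takes a genuinely different route. The paper argues directly: writing $p=p_1+\dots+p_r$ with $p_i=(v_1^i\wedge v_2^i)^{\otimes 2}$, it observes that the degree-one catalecticant evaluates linear forms on the planes $W_i=\langle v_1^i,v_2^i\rangle$, so as soon as two of these planes are distinct the kernel drops below dimension $2$, contradicting $\operatorname{rk}=2$; hence all $p_i$ are proportional. Your argument instead identifies $\ker\mathcal{C}^{(2,2),(1)}_p$ with $W^{\perp}$ for a unique plane $W$, notes that $I\bigl((w_1\wedge w_2)^{\otimes 2}\bigr)$ is by Definition \ref{subpoint} precisely the ideal generated by $W^{\perp}$, uses the remark that $p^{\perp}$ is an ideal to promote the degree-one containment to $I(q)\subset p^{\perp}$, and closes with Theorem \ref{LemmaSchurApo} for $r=1$. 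This is legitimate (there is no circularity: the apolarity lemma and Lemma \ref{topdeg} do not depend on this proposition), and it is arguably cleaner — the paper's step ``strictly less than $2$ linear forms vanish'' tacitly assumes the kernel of a sum of catalecticants is the intersection of the kernels, which requires a small extra argument when three or more planes with linearly dependent Pl\"ucker vectors occur, whereas your route sidesteps any case analysis on the decomposition. What you pay for this is reliance on the full strength of the apolarity lemma where the paper's intent in Section \ref{quintasez} is to give elementary, self-contained rank tests via catalecticants; what you gain is a template that transfers verbatim to the analogous statements for other $\mu$ and other Grassmannians.
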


\begin{proof}
Let $p = (v_1 \wedge v_2)^{\otimes 2} \in X$. Then 
$$ \mathcal{C}^{(2,2),(1)}_p (x) = x(v_1)  v_2 \otimes v_1 \wedge v_2 - x(v_2) v_1 \otimes v_1 \wedge v_2 \in \mathbb{S}_{(2,2)/(1)} \mathbb{C}^4$$
and it is easy to see that $\ker \mathcal{C}^{(2,2),(1)}_p = \langle x_3,x_4 \rangle$. Hence the rank of the map is $2$. 

\noindent On the other hand, assume that $\operatorname{rk}(\mathcal{C}^{(2,2),(1)}_p) = 2$. This means that in the kernel of this catalecticant map there are two independent linear forms which vanish on $p$ via Schur apolarity action. Assume that $p$ is a sum of $(2,2)$-rank $1$ tensors $p_1,\dots,p_r$. We may assume that $p_i = (v_1^i \wedge v_2^i)^{\otimes 2}$, representing the subspace $\langle v_1^i,v_2^i \rangle$, with $i =1,2$. If $p_i$ and $p_j$ are independent as elements of the ambient space, the related vector subspaces must be different. Since this catalecticant actually evaluates linear forms on these subspaces, if two points or more are independent we get that strictly less than $2$ linear forms vanish on $p$. This cannot happen since by hypothesis we have $\operatorname{rk}(\mathcal{C}^{(2,2),(1)}_p) = 2$. Hence the only possibility is that all the $p_i'$s are multiple each other and hence $p = p_1$ which is what we wanted.
\end{proof}

\begin{remark}
If $p \in X$, then the generating elements of $\ker \mathcal{C}^{(2,2),(1)}_p$ are exactly the equations cutting out the subspace associated to $p$. Moreover, from this we get that for any element $t \in \mathbb{S}_{(2,2)} \mathbb{C}^4$, if $t = t_1 +\dots + t_r$ then

$$ \operatorname{rk} \mathcal{C}^{(2,2),(1)}_{t} = \operatorname{rk} \mathcal{C}^{(2,2),(1)}_{t_1 + \dots + t_r} \leq \sum_{i=1}^r \operatorname{rk} \mathcal{C}^{(2,2),(1)}_{t_i} = 2r$$
from which we get that given any $t$, if $\operatorname{rk} \mathcal{C}^{(2,2),(1)}_t = r$, then $t$ has $(2,2)$-rank at least $\lceil \frac{r}{2} \rceil$. Unfortunately it is evident that this map can give information up to $(2,2)$-rank $2$. 
\end{remark}

\noindent Let us see now the case of $\mu = (1,1)$. 

\begin{prop} \label{rango1G24}
Let $p \in \mathbb{P}(\mathbb{S}_{(2,2)}\mathbb{C}^4)$. Then $p$ has $(2,2)$-rank $1$ if and only if $\operatorname{rk}(\mathcal{C}^{(2,2),(1,1)}_p) = 1$. 
\end{prop}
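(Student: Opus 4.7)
The plan is to identify the catalecticant $\mathcal{C}^{(2,2),(1,1)}_p$ with the tensor $p$ itself, viewed as a linear map, so that computing its rank reduces to a tensor-rank question in $\superwedge^2 \mathbb{C}^4 \otimes \superwedge^2 \mathbb{C}^4$. The crucial observation is that the skew diagram $(2,2)/(1,1)$ consists of a single column of length two, so $\mathbb{S}_{(2,2)/(1,1)}\mathbb{C}^4 \cong \superwedge^2 \mathbb{C}^4$. Unwinding Definition \ref{Schurapodef} for $\lambda = (2,2)$ and $\mu = (1,1)$, the map $\widetilde{\varphi}$ restricted to $\mathbb{S}_{(2,2)} \mathbb{C}^4 \subset \superwedge^2 \mathbb{C}^4 \otimes \superwedge^2 \mathbb{C}^4$ sends $(\omega_1 \otimes \omega_2) \otimes \eta$ to $\langle \omega_1, \eta \rangle\,\omega_2$ (skew-symmetric apolarity in the first factor paired with the trivial identity on the second). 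Consequently $\mathcal{C}^{(2,2),(1,1)}_p$ is exactly $p$ regarded as a linear map $\superwedge^2(\mathbb{C}^4)^* \to \superwedge^2 \mathbb{C}^4$ via the isomorphism $\superwedge^2 \mathbb{C}^4 \otimes \superwedge^2 \mathbb{C}^4 \cong \operatorname{Hom}(\superwedge^2(\mathbb{C}^4)^*,\superwedge^2 \mathbb{C}^4)$, and its rank equals the tensor rank of $p$ in $\superwedge^2 \mathbb{C}^4 \otimes \superwedge^2 \mathbb{C}^4$.

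For the forward direction, plugging $p = (v_1 \wedge v_2)^{\otimes 2}$ into the formula yields $\mathcal{C}^{(2,2),(1,1)}_p(\eta) = \langle v_1 \wedge v_2, \eta\rangle\,(v_1 \wedge v_2)$, whose image is the line $\langle v_1 \wedge v_2 \rangle$, so the rank is $1$. For the converse, suppose $\operatorname{rk}\mathcal{C}^{(2,2),(1,1)}_p = 1$. Then the identification above gives $p = \omega_1 \otimes \omega_2$ for some nonzero $\omega_1, \omega_2 \in \superwedge^2 \mathbb{C}^4$. Since the row-symmetrization step in the Young symmetrizer $c_{(2,2)}$ swaps the two $\superwedge^2$ factors, the subspace $\mathbb{S}_{(2,2)} \mathbb{C}^4$ sits inside the symmetric square $\Sym^2(\superwedge^2 \mathbb{C}^4)$, so $p$ is symmetric in its two $\superwedge^2$ factors; over $\mathbb{C}$ this forces $\omega_2 = c\,\omega_1$, hence $p = \omega^{\otimes 2}$ for a single $\omega \in \superwedge^2 \mathbb{C}^4$. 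Finally, the standard plethysm
$$ \Sym^2(\superwedge^2 \mathbb{C}^4) = \mathbb{S}_{(2,2)} \mathbb{C}^4 \oplus \superwedge^4 \mathbb{C}^4, $$
in which the projection onto the second summand is the multiplication map $\omega^{\otimes 2} \mapsto \omega \wedge \omega$, forces $\omega \wedge \omega = 0$; this is precisely the Pl\"ucker relation for $\mathbb{G}(2,\mathbb{C}^4)$, so $\omega = v_1 \wedge v_2$ is decomposable and $p = (v_1 \wedge v_2)^{\otimes 2}$ has $(2,2)$-rank $1$.

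The main obstacle is the identification of the catalecticant with $p$ viewed as a bilinear form. This works cleanly here because the skew shape $(2,2)/(1,1)$ collapses to a single column, so $\mathbb{S}_{(2,2)/(1,1)}\mathbb{C}^4 \cong \superwedge^2 \mathbb{C}^4$ aligns perfectly with the inclusion $\mathbb{S}_{(2,2)} \mathbb{C}^4 \subset \superwedge^2 \mathbb{C}^4 \otimes \superwedge^2 \mathbb{C}^4$. Once this identification is in place, the remainder reduces to two standard facts: symmetric rank-$1$ tensors over $\mathbb{C}$ are pure squares, and the Grassmannian $\mathbb{G}(2,\mathbb{C}^4)$ is cut out inside $\mathbb{P}(\superwedge^2 \mathbb{C}^4)$ by the single Pl\"ucker quadric.
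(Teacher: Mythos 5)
Your proof is correct, but the converse is argued quite differently from the paper. The paper's proof of the ``if'' direction is by contradiction on an assumed additive decomposition: writing $p=p_1+\dots+p_r$ with $p_i=(v_{1,i}\wedge v_{2,i})^{\otimes 2}$ and $r\ge 2$, it observes that two linearly independent summands force two linearly independent bivectors $v_{1,i}\wedge v_{2,i}$, $v_{1,j}\wedge v_{2,j}$ in the image of the catalecticant, contradicting $\operatorname{rk}(\mathcal{C}^{(2,2),(1,1)}_p)=1$. You instead exploit the structure of the ambient space: since $\mu'=(2)$ contracts only the first $\superwedge^2$ factor, $\mathcal{C}^{(2,2),(1,1)}_p$ \emph{is} $p$ viewed as an element of $\superwedge^2\mathbb{C}^4\otimes\superwedge^2\mathbb{C}^4\cong\operatorname{Hom}(\superwedge^2(\mathbb{C}^4)^*,\superwedge^2\mathbb{C}^4)$ (this matches the paper's own worked example of $\varphi$ on $\mathbb{S}_{(2,2)}\mathbb{C}^4$), so rank one means $p=\omega_1\otimes\omega_2$ is decomposable; the multiplicity-one occurrence of $\mathbb{S}_{(2,2)}\mathbb{C}^4$ inside $\superwedge^2\mathbb{C}^4\otimes\superwedge^2\mathbb{C}^4$ places it in $\Sym^2(\superwedge^2\mathbb{C}^4)$, whence $p=\omega^{\otimes 2}$, and the plethysm $\Sym^2(\superwedge^2\mathbb{C}^4)=\mathbb{S}_{(2,2)}\mathbb{C}^4\oplus\superwedge^4\mathbb{C}^4$ kills $\omega\wedge\omega$ and forces $\omega$ decomposable. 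Your route buys a genuinely constructive conclusion --- it lands $p$ directly on the cone over $(\mathbb{G}(2,\mathbb{C}^4),\mathcal{O}(2))$ without ever invoking a hypothetical rank-$r$ decomposition, and it sidesteps the delicate point in the paper's argument that the image of a catalecticant of a sum need not contain the images of the individual summands when $r>2$. What it costs is generality: it leans on two coincidences special to $(2,2)$ and $\dim V=4$ (the skew shape collapsing to a single column, and the Grassmannian being a single Pl\"ucker quadric), whereas the paper's ``sum of ranks'' style of argument is the one that scales to Proposition \ref{rango1Gqualunque} and to the lower bounds used in the subsequent examples. Two small points worth tightening: justify the symmetry $\mathbb{S}_{(2,2)}\mathbb{C}^4\subset\Sym^2(\superwedge^2\mathbb{C}^4)$ by the multiplicity-one argument rather than by inspecting the order of row- and column-operations in $c_{(2,2)}$ (the symmetrizer applies $a_\lambda$ first and $b_\lambda$ second, so the column-exchange symmetry of the image is not literally visible from the operator), and note that $\mathbb{S}_{(2,2)}\mathbb{C}^4$ is the kernel of the wedge projection by Schur's lemma.
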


\begin{proof}
Let $p = (v_1 \wedge v_2)^{\otimes 2} \in X$. Then 
$$ \mathcal{C}^{(2,2),(1,1)}_p(x_1 \wedge x_2) = \det \left ( \begin{matrix} x_1(v_1) & x_1(v_2) \\ x_2(v_1) & x_2(v_2)
\end{matrix} \right ) \cdot v_1 \wedge v_2$$
and it is easy to see that $\ker \mathcal{C}^{(2,2),(1,1)}_p = \langle x_i \wedge x_j: i<j,\ j = 3,4 \rangle$. Hence the rank of the map is $1$.

\noindent On the other hand assume that $\operatorname{rk}(\mathcal{C}^{(2,2),(1,1)}_p) = 1$. Suppose again that $p$ is a sum of $(2,2)$-rank $1$ elements $p_1,\dots,p_r$, with $p_i = (v_{1,i} \wedge v_{2,i})^{\otimes 2}$ with $i=1,\dots,r$.
Suppose by contradiction that $p$ has $(2,2)$-rank $2$ or more. Then we can find at least two points $p_i$ and $p_j$ which obviously are linearly independent. This implies that $v_{1,i} \wedge v_{2,i}$ and $v_{1,i} \wedge v_{2,i}$ are also linearly independent. On the other hand the image of the catalecticant map must be spanned by at least these two elements and hence its rank is at least $2$. This is a contradiction with the hypothesis on the rank of the catalecticant map. Hence $p$ must have $(2,2)$-rank $1$ and this concludes the proof.
\end{proof}

\begin{example}
Let 
$$t = (v_1 \wedge v_2)^{\otimes 2} + (v_1 \wedge v_3)^{\otimes 2} + (v_1 \wedge v_4)^{\otimes 2} + (v_2 \wedge v_3)^{\otimes 2} + (v_2 \wedge v_4)^{\otimes 2} + (v_3 \wedge v_4)^{\otimes 2}$$
be an element of $\mathbb{S}_{(2,2)}\mathbb{C}^4$. One can easily compute that $\mathcal{C}^{(2,2),(1,1)}_t$ is the $6 \times 6$ identity matrix whose rank is $6$. Hence by Proposition \ref{rango1G24} we can say that $t$ has $(2,2)$-rank at least $6$. Since in the above formula $t$ is written as a sum of $6$ elements of rank $1$, we can already conclude that the decomposition is minimal and $t$ has $(2,2)$-rank $6$.
\end{example}

We omit the study of the other cases $\mu = (2),(2,1)$ since they can be treated with analogous arguments. The case $\mu = (2,2)$ is trivial since the catalecticant matrix is a vector and has always rank $1$ if computed on non zero elements. In Table \ref{tabellaGrass} we give the collection of this study applied to all the $\mu \subset (2,2)$. The cases $(1)$ and $(2,1)$ have been put together since the respective catalecticant maps are transpose each other.

\begin{table}[ht] \begin{center}
\begin{tabular}{| c | c | c |} 
\hline $\mu$ & $\operatorname{rk}(\mathcal{C}^{(2,2),\mu}_t)$ & order of the matrix \\
\hline
$(1)$, $(2,1)$ & $2$ & $4 \times 20$ / $20 \times 4$ \\
\hline
$(2)$ & $3$ & $10 \times 10$ \\ 
\hline $(1,1)$ & 1 & $6 \times 6$ \\
\hline
\end{tabular}
\caption{Ranks of the catalecticants on $(2,2)$-rank $1$ elements.} \label{tabellaGrass}  \end{center}
\end{table}

Looking at the Table \ref{tabellaGrass} it is evident that the catalecticant map which gives more information about the $(2,2)$-rank is the one determined by $\mu = (1,1)$. Note that roughly this is the catalecticant map which chops a column in order to split the diagram of $(2,2)$ in two equal halves. 

\smallskip
We analyse now the case of any Grassmann variety embedded with $\mathcal{O}(d)$. We may assume that $\lambda = (d^k)$ with any $d$ and $k$ positive integers such that $k < \dim(V)$. In this case the diagram of $\lambda$ is a $k \times d$ rectangle. Following the results of the previous study, we can check if the catalecticant maps which erase a certain number of columns give good bounds on the $(d^k)$-rank of a tensor. As previously told, since we cannot check the rank of every existing catalecticant map, we cannot say that the bound determined by the maps we are going to compute is the best one. 

Let $p \in X$ be a point of $(d^k)$-rank $1$. We can take $p$ as the highest weight of the representation and write 

$$ p = (v_1 \wedge \dots \wedge v_k)^{\otimes d}.$$
Let $h \leq d$ and consider the partition $(h^k)$. Pictorially it is the $k \times h$ rectangle contained in the left upper corner of the $k \times d$ rectangle representing $\lambda$. We can consider the map

$$\mathcal{C}^{(d^k),(h^k)}_p : \mathbb{S}_{(h^k)} V^* \longrightarrow \mathbb{S}_{(d^k)/(h^k)} V. $$
It is easy to prove that $\mathbb{S}_{(d^k)/(h^k)} V \simeq \mathbb{S}_{(d-h)^k} V$. Indeed remark that the only possible Y-word that can be written inside the skew diagram of $(d^k)/(h^k)$ is the one of content $(d-h)^k$. Remark also that 
$$\mathcal{C}^{(d^k),(h^k)}_p=\left(\mathcal{C}^{(d^k),((d-h)^k)}_p\right)^t$$
and hence it makes sense to focus only to the case $h \leq \lfloor \frac{d}{2} \rfloor$. 

\begin{prop} \label{rango1Gqualunque}
Let $p \in \mathbb{P}(\mathbb{S}_{(d^k)}V)$. Then $p$ has $(d^k)$-rank $1$ if and only if $\operatorname{rk}(\mathcal{C}^{(d^k),(h^k))}_p) = 1$ for any $h \leq \lfloor \frac{d}{2} \rfloor$. 
\end{prop}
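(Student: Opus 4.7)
The proof splits into the two implications of the biconditional.

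\emph{Forward direction.} Assume $p$ has $(d^k)$-rank $1$, so up to scalar $p = \xi^{\otimes d}$ with $\xi = v_1 \wedge \dots \wedge v_k$ representing a $k$-dimensional subspace $W \subset V$. I would unwind Definition \ref{Schurapodef}: for the rectangular partition $(d^k)$ the Schur apolarity action on $p$ factors through the tensor product of $d$ copies of the skew-symmetric apolarity acting on the $d$ factors of $(\wedge^k V)^{\otimes d}$. Because $p$ is a pure power, each factor carries the same $\xi$, and the $(h^k)$-apolar action consumes exactly $h$ of these factors to produce the scalar pairing with $\xi^{\otimes h}$. One obtains the clean formula
\[
\mathcal{C}^{(d^k),(h^k)}_p(x) = \pi_h(x,\xi^{\otimes h})\cdot \xi^{\otimes(d-h)},
\]
where $\pi_h$ denotes the nondegenerate Schur pairing $\mathbb{S}_{(h^k)}V^* \otimes \mathbb{S}_{(h^k)} V \to \mathbb{K}$. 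The image is the line spanned by $\xi^{\otimes(d-h)}$, so the catalecticant has rank exactly $1$ for every $h$.

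\emph{Backward direction.} Assume $\operatorname{rk}(\mathcal{C}^{(d^k),(h^k)}_p) = 1$ for every $h \le \lfloor d/2 \rfloor$, and suppose for contradiction that $p$ has rank $r \ge 2$. Pick a minimal decomposition $p = \sum_{i=1}^{r} c_i \xi_i^{\otimes d}$ with pairwise distinct subspaces $W_i = \langle \xi_i \rangle$ and all $c_i \ne 0$; combining proportional summands ensures this reduction. Fix some admissible $h$; by linearity and the forward computation applied to each summand,
\[
\mathcal{C}^{(d^k),(h^k)}_p(x) = \sum_{i=1}^{r} c_i\, \pi_h(x,\xi_i^{\otimes h})\cdot \xi_i^{\otimes(d-h)}.
\]
The idea is to exhibit two linearly independent elements in the image, mirroring the argument of Proposition \ref{rango1G24}. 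Two distinct subspaces $W_1 \neq W_2$ give linearly independent $k$-vectors $\xi_1,\xi_2$ in $\wedge^k V$, and hence linearly independent $\xi_1^{\otimes m}, \xi_2^{\otimes m}$ in $\mathbb{S}_{(m^k)}V$ for each $m \ge 1$ (two distinct projective points of the $m$-th Veronese embedding of the Grassmannian). By nondegeneracy of $\pi_h$ I would find $x_1,x_2 \in \mathbb{S}_{(h^k)}V^*$ with $\pi_h(x_a,\xi_b^{\otimes h}) = \delta_{ab}$ for $a,b \in \{1,2\}$, chosen moreover in the common annihilator of the remaining $\xi_i^{\otimes h}$ for $i \ge 3$. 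The corresponding images $\mathcal{C}^{(d^k),(h^k)}_p(x_a) = c_a \xi_a^{\otimes(d-h)}$ are then linearly independent, contradicting rank $1$.

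The main obstacle is exactly the last annihilation step: for $r \ge 3$ it requires $\xi_1^{\otimes h},\dots,\xi_r^{\otimes h}$ to span a subspace of dimension $r$ in $\mathbb{S}_{(h^k)}V$, which can fail for a particular small $h$ if the $W_i$ happen to lie on a low-dimensional linear section of $\mathbb{G}(k,V)$. This is why the hypothesis asks for rank $1$ at \emph{every} $h$ in the range: combining this with the transpose identification $\mathcal{C}^{(d^k),(h^k)}_p = (\mathcal{C}^{(d^k),((d-h)^k)}_p)^{t}$ extends the rank-$1$ assumption to the whole interval $h \in [1,d-1]$. Choosing $h$ large enough, the $h$-Veronese of $\mathbb{G}(k,V)$ separates the finite configuration $\{W_i\}$ into a linearly independent set, which reduces the general case to the $r=2$ argument above and completes the proof.
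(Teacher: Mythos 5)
Your proof takes the same route as the paper's. The forward direction coincides with the printed one: for $p=\xi^{\otimes d}$ the image of $\mathcal{C}^{(d^k),(h^k)}_p$ is the line spanned by $\xi^{\otimes(d-h)}$. In the backward direction the paper's own argument is precisely the ``naive'' step you single out: from the linear independence of $\xi_i$ and $\xi_j$ it asserts outright that $\xi_i^{\otimes(d-h)}$ and $\xi_j^{\otimes(d-h)}$ are contained in the image of the catalecticant, with no justification of why the contributions of the remaining summands can be suppressed. Your diagnosis of the difficulty is correct, and it points to a gap in the paper's proof as well: to place the pure power $\xi_i^{\otimes(d-h)}$ in the image one needs a functional that pairs nontrivially with $\xi_i^{\otimes h}$ and annihilates every other $\xi_l^{\otimes h}$, which requires $\xi_i^{\otimes h}\notin\langle \xi_l^{\otimes h} : l\neq i\rangle$; for $r\ge 3$ and small $h$ this can fail (e.g. $v_1\wedge v_2$, $v_1\wedge v_3$ and $v_1\wedge(v_2+v_3)$ are three distinct decomposable vectors that are linearly dependent in $\superwedge^2 V$). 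The argument as printed is complete only for $r=2$, where any two distinct points of projective space are automatically independent.

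Your proposed repair, however, does not close the gap. The degrees at your disposal are $h\le\lfloor d/2\rfloor$, extended to $h\le d-1$ by the transpose identity, whereas separating $r$ distinct points of $\mathbb{G}(k,V)$ under $\mathcal{O}(h)$ into a linearly independent configuration requires in general $h\ge r-1$, and nothing bounds the length $r$ of a minimal decomposition by $d$. What does work, for $2\le r\le d$, is to avoid the separation entirely: write $\mathcal{C}^{(d^k),(h^k)}_p$ as the composition $x\mapsto\bigl(c_i\,\pi_h(x,\xi_i^{\otimes h})\bigr)_i\mapsto\sum_i c_i\,\pi_h(x,\xi_i^{\otimes h})\,\xi_i^{\otimes(d-h)}$ through $\mathbb{K}^r$; Sylvester's rank inequality then gives $\operatorname{rk}\mathcal{C}^{(d^k),(h^k)}_p\ge m_h+m_{d-h}-r$, where $m_j:=\dim\langle\xi_1^{\otimes j},\dots,\xi_r^{\otimes j}\rangle\ge\min(j+1,r)$ by the growth of the Hilbert function of $r$ distinct points, and with $h=\lfloor d/2\rfloor$ this is $\ge 2$ whenever $2\le r\le d$. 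The case of a minimal decomposition of length $r>d$ is covered neither by your argument nor by the paper's and would need a further idea.
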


\begin{proof}
Assume that $p$ has $(d^k)$-rank $1$. We may write 

$$ p = (v_1 \wedge \dots \wedge v_k)^{\otimes d}.$$
It is evident that 

$$ \operatorname{im}\left(\mathcal{C}^{(d^k),(h^k)}_p \right) = \langle (v_1 \wedge \dots \wedge v_k)^{\otimes d-h} \rangle $$
and hence the rank of the map is $1$.

\noindent Suppose now that $\operatorname{rk}(\mathcal{C}^{(d^k),(h^k)}_p )= 1$ and that $p$ can be written as sum of $p_1,\dots,p_r$ of $(d^k)$-rank $1$. We may assume that $p_i = (v_{1,i}\wedge \dots \wedge v_{k,i})^{\otimes d}$. If $p$ has $(d^k)$-rank $2$ or more, then there are at least two points $p_i$ and $p_j$ such that $v_{1,i}\wedge \dots \wedge v_{k,i}$ and $v_{1,j}\wedge \dots \wedge v_{k,j}$ are linearly independent. In particular we will have that $(v_{1,i}\wedge \dots \wedge v_{k,i})^{\otimes d-h}$ and $(v_{1,j}\wedge \dots \wedge v_{k,j})^{\otimes d-h}$ are contained in the image of the catalecticant map and hence its rank will be at least $2$. This is in contradiction with our hypothesis and hence $p$ must have $(d^k)$-rank $1$. This concludes the proof.
\end{proof}

\begin{remark}
Given a tensor $t \in \mathbb{S}_{(d^k)} \mathbb{C}^n$, by Proposition \ref{rango1Gqualunque} we get that if the rank of $\mathcal{C}^{(d^k),(h^k)}_t$ is equal to $r$, then $t$ has $(d^k)$-rank at least $r$. Since by a simple count 

$$\dim \mathbb{S}_{(h^k)} \mathbb{C}^n = \left( \prod_{j=k}^{n-1} \frac{h+j}{j} \right) \left( \prod_{j=k-2}^{n-2} \frac{h+j}{j} \right ) \dots \left ( \prod_{j=1}^{n-k} \frac{h+j}{j} \right )$$
we have that $\dim \mathbb{S}_{(a^k)} \mathbb{C}^n > \mathbb{S}_{(b^k)} \mathbb{C}^n$ if $a > b$. Hence the most square catalecticant map is the one given by $h = \lceil \frac{d}{2} \rceil$. 
\end{remark} \smallskip

We discuss now the case of any flag variety. The situation here is a bit different as the following example shows.

\begin{example} \label{EsempioFakeRk2}
Consider the complete flag variety $\mathbb{F}(1,2,3;\mathbb{C}^4)$ embedded with $\mathcal{O}(1,1,1)$ in $\mathbb{P}(\mathbb{S}_{(3,2,1)} \mathbb{C}^4)$. Consider the element $ t$

\begin{equation} \label{FakeRk2} t = v_1 \wedge v_2 \wedge v_3 \otimes v_1 \wedge v_2 \otimes v_1 + v_1 \wedge v_2 \wedge v_3 \otimes v_2 \wedge v_3 \otimes v_3 \in \mathbb{P}(\mathbb{S}_{(3,2,1)} \mathbb{C}^4). \end{equation}
By \eqref{FakeRk2} the element is written as a sum of two $(3,2,1)$-rank $1$ elements representing the flags 

$$\langle v_1 \rangle \subset \langle v_1, v_2 \rangle \subset \langle v_1,v_2,v_3 \rangle, \quad \langle v_3 \rangle \subset \langle v_2, v_3 \rangle \subset \langle v_1,v_2,v_3 \rangle.$$
Hence the $\lambda$-rank of $t$ is at most $2$. Consider for a moment only the first addend

$$ t_1 = v_1 \wedge v_2 \wedge v_3 \otimes v_1 \wedge v_2 \otimes v_1 $$
and consider the catalecticant map $\mathcal{C}^{(3,2,1),(2)}_{t_1} : \mathbb{S}_{(2)} (\mathbb{C}^4)^* \longrightarrow \mathbb{S}_{(3,2,1)/(2)} \mathbb{C}^4$. From Example \ref{esschurapoflag} We have that

\begin{align*}
&\mathcal{C}^{(3,2,1),(2)}_{t_1}(\alpha \beta) = \\
&= \sum_{i=1}^3 (-1)^{i+1} \left [ \alpha(v_i)\beta(v_1) + \alpha(v_1)\beta(v_i) \right ] \cdot v_1 \wedge \dots \wedge \hat{v_i} \wedge \dots \wedge v_3 \otimes v_2 \otimes v_1 + \\
&- \sum_{i=1}^3  (-1)^{i+1} \left [ \alpha(v_i)\beta(v_2) + \alpha(v_2)\beta(v_i) \right ] \cdot v_1 \wedge \dots \wedge \hat{v_i} \wedge \dots \wedge v_3 \otimes v_1 \otimes v_1.
\end{align*}
It is easy to see that

$$\ker \mathcal{C}^{(3,2,1),(2)}_{t_1} = \langle x_1x_4, x_2x_4,x_3x_4,x_4^2,x_3^2 \rangle$$
and hence $ \operatorname{rk}(\mathcal{C}^{(3,2,1),(2)}_{t_1}) = 5$. Clearly this equality holds for any $t_1 \in \mathbb{F}(1,2,3;\mathbb{C}^4)$. Coming back to $t$, if we consider the same catalecticant map we get

$$\ker \mathcal{C}^{(3,2,1),(2)}_t = \langle x_1x_4, x_2x_4,x_3x_4,x_4^2 \rangle$$
which means that $ \operatorname{rk}(\mathcal{C}^{(3,2,1),(2)}_{t_1}) = 6$. This implies that $t$ cannot have $(3,2,1)$-rank $1$ and hence the decomposition \eqref{FakeRk2} is minimal. 

\noindent Differently to the case of Grassmann varieties, chopping a column of the diagram of $(3,2,1)$ does not give the right information about the $(3,2,1)$-rank $1$ tensors. Indeed if we consider the partition $(1,1,1) \subset (3,2,1)$ together with the respective catalecticant map we have that

$$ \ker \mathcal{C}^{(3,2,1),(1,1,1)}_t = \langle x_1 \wedge x_2 \wedge x_4,  x_1 \wedge x_3\wedge x_4,  x_2 \wedge x_3 \wedge x_4 \rangle$$
and hence $\operatorname{rk}(\mathcal{C}^{(3,2,1),(1,1,1)}_{t}) = 1$ which happens also for any $t \in \mathbb{F}(1,2,3;\mathbb{C}^4)$. This is because the two flags appearing in the decomposition share the same biggest subspace $\langle v_1,v_2,v_3 \rangle$.
\end{example}

Even though it seems that there is no connection between the rank of catalecticant maps and the $\lambda$-rank of tensors, we can give a lower bound on the $\lambda$-rank of a tensor. Let $X= \left(\mathbb{F}(n_1,\dots,n_s;\mathbb{C}^n),\mathcal{O}(d_1,\dots,d_s)\right)$ be the minimal orbit in $\mathbb{P}(\mathbb{S}_{\lambda} V)$. With this notation we have that

$$ \lambda =\left ((d_1+\dots+d_s)^{n_1},(d_2+\dots+d_s)^{n_2-n_1},\dots,d_s^{n_s-n_{s-1}} \right). $$
Hence the Young diagram of $\lambda$ is given by $d_s$ columns of length $n_s$, then $d_{s-1}$ columns of length $n_{s-1}$ and so on up to $d_1$ columns of length $n_1$. For example if $X=\left(\mathbb{F}(1,2,4;\mathbb{C}^5),\mathcal{O}(1,2,2)\right)$, then $\lambda =(5,4,2^2)$ and its Young diagram is 

$$ \ydiagram{5,4,2,2}.$$
Let $\lambda$ be a partition such that it has $d_i$ columns of length $n_i$, with $1 \leq i \leq s$. Consider the catalecticant map determined by $(e^{n_s}) \subset \lambda$ with $e \leq d_s$, i.e. the one that removes the first $e$ columns of length $n_s$ from the diagram of $\lambda$. Then it is easy to see that $\mathbb{S}_{\lambda / (e^{n_s})} V \simeq \mathbb{S}_{\mu_e} V$
where

\begin{align}\label{FormulambdaU}\mu_e = ((d_1+\dots+d_{s-1}+(d_s-e))^{n_1}&,\ (d_2+\dots+d_{s-1}+(d_s-e))^{n_2-n_1},\dots \\ &\dots, (d_{s-1}+(d_s-e))^{n_{s-1}-n_{s-2}},(d_s-e)^{n_s}) \nonumber
\end{align}
i.e. $\lambda$ with the first $e$ columns of length $n_s$ removed.

\begin{algorithm} \label{algoritmo2} In order to get a bound on the $\lambda$-rank using catalecticants, we provide an algorithm that after the computation of a sequence of ranks of ``consecutive'' catalecticant maps returns a lower bound on the $\lambda$-rank of the given tensor. The last registered rank will be a lower bound on the $\lambda$-rank of $t$.

\noindent Before proceeding with the procedure, we need a preparatory fact.

\begin{prop} \label{PropCheServe}
Let $\lambda$ be a partition with $d_i$ columns of length $n_i$, with $i= 1,\dots,s$, and let $t \in \mathbb{S}_{\lambda} V$. If 

$$\operatorname{rk} \mathcal{C}^{\lambda,\left ( \lceil \frac{d_s}{2} \rceil \right)^{n_s}}_t = 1,\ \text{then for any}\ 1 \leq e \leq d_s,\ \text{we have that}\ \operatorname{rk} \mathcal{C}^{\lambda,(e^{n_s})}_t = 1.$$
\end{prop}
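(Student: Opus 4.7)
The strategy is to use associativity of the Schur apolarity action to reduce the statement to the rectangular (Grassmann) case already covered by Proposition~\ref{rango1Gqualunque}. Set $h = \lceil d_s/2 \rceil$.

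First I would observe that, for every $1 \leq e \leq h$, the Littlewood--Richardson coefficient $N^{(e^{n_s}),((h-e)^{n_s})}_{(h^{n_s})}$ equals $1$ (stacking two rectangles of height $n_s$ can be done in essentially one way), so the multiplication $\mathcal{M}\colon \mathbb{S}_{(e^{n_s})}V^* \otimes \mathbb{S}_{((h-e)^{n_s})}V^* \to \mathbb{S}_{(h^{n_s})}V^*$ is the canonical equivariant surjection onto this component. Mirroring the commutative diagram used in the proof of Proposition~\ref{PropIdealKill}, the Schur apolarity is associative along this factorisation:
$$\varphi\bigl(g' \otimes \mathcal{C}^{\lambda,(e^{n_s})}_t(g)\bigr) \;=\; \mathcal{C}^{\lambda,(h^{n_s})}_t\bigl(\mathcal{M}(g \otimes g')\bigr)$$
for all $g \in \mathbb{S}_{(e^{n_s})}V^*$ and $g' \in \mathbb{S}_{((h-e)^{n_s})}V^*$.

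Next, the hypothesis $\operatorname{rk}\mathcal{C}^{\lambda,(h^{n_s})}_t = 1$ is equivalent, via the canonical LR-coefficient-one inclusion $\mathbb{S}_\lambda V \hookrightarrow \mathbb{S}_{(h^{n_s})}V \otimes \mathbb{S}_{\mu_h}V$, to the statement that $t = \xi \otimes A_0$ as a simple tensor, for some non-zero $\xi \in \mathbb{S}_{(h^{n_s})} V$ and $A_0 \in \mathbb{S}_{\mu_h} V$. Combining this with the associativity identity, and using that the equivariant pairing $\mathbb{S}_{\mu_e}V \otimes \mathbb{S}_{((h-e)^{n_s})}V^* \to \mathbb{S}_{\mu_h}V$ is injective on its first factor (Schur's lemma applied to the irreducible $\mathbb{S}_{\mu_e} V$, together with $N^{((h-e)^{n_s}),\mu_h}_{\mu_e}=1$), I would deduce the key identity
$$\operatorname{rk}\mathcal{C}^{\lambda,(e^{n_s})}_t \;=\; \operatorname{rk}\mathcal{C}^{(h^{n_s}),(e^{n_s})}_\xi \qquad (1 \leq e \leq h),$$
and, by a symmetric argument applied to the factorisation $(e^{n_s}) = (h^{n_s}) + ((e-h)^{n_s})$, $\operatorname{rk}\mathcal{C}^{\lambda,(e^{n_s})}_t = \operatorname{rk}\mathcal{C}^{\mu_h,((e-h)^{n_s})}_{A_0}$ for $h < e \leq d_s$. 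The problem is now transferred to two rectangular Grassmann catalecticants (of $\xi$ and of the Grassmann part of $A_0$), to which Proposition~\ref{rango1Gqualunque} applies: these catalecticants have rank one precisely when $\xi$ (respectively the top-Grassmann factor of $A_0$) is a rank-one element of the corresponding Veronese embedding of $\mathbb{G}(n_s, V)$.

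The remaining and crucial step—the main obstacle—is therefore the structural claim that the constraint $\xi \otimes A_0 \in \mathbb{S}_\lambda V \subsetneq \mathbb{S}_{(h^{n_s})}V \otimes \mathbb{S}_{\mu_h}V$ (as opposed to merely the ambient tensor product) forces $\xi$ to be Grassmann rank one. The plan is to prove this by unpacking the explicit description of the inclusion via Young symmetrisers and checking that the Pl\"ucker-type relations cutting $\mathbb{S}_\lambda V$ out of the ambient tensor product specialise, on the Segre variety of simple tensors, to the $2 \times 2$ Pl\"ucker minors that vanish precisely on the Grassmannian. An equivalent route is to take any additive decomposition $t = \sum_i \omega_i^{\otimes d_s} \otimes w_i$ into $\lambda$-rank-one summands (as in the proof of Proposition~\ref{rango1Gqualunque}) and to exploit the balanced choice $h = \lceil d_s/2 \rceil$ to force all the wedges $\omega_i$ to span a common $n_s$-plane, whence $t = (v_1 \wedge \cdots \wedge v_{n_s})^{\otimes d_s} \otimes W$ and the image of every $\mathcal{C}^{\lambda,(e^{n_s})}_t$ is contained in the line $\langle (v_1 \wedge \cdots \wedge v_{n_s})^{\otimes(d_s-e)} \otimes W \rangle$, completing the proof.
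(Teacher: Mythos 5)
Your scaffolding is sound and genuinely different from the paper's argument: you recast the hypothesis as saying that $t$ is a \emph{decomposable} tensor $\xi \otimes A_0$ under the multiplicity-one inclusion $\mathbb{S}_{\lambda}V \hookrightarrow \mathbb{S}_{(h^{n_s})}V \otimes \mathbb{S}_{\mu_h}V$, and you use the product structure of the apolarity action to identify $\operatorname{rk}\mathcal{C}^{\lambda,(e^{n_s})}_t$ with $\operatorname{rk}\mathcal{C}^{(h^{n_s}),(e^{n_s})}_{\xi}$ for $e \leq h$. The paper does something else: it proves the contrapositive directly from an additive $\lambda$-rank decomposition $t = t_1 + \dots + t_r$, picking two summands whose top wedges differ and whose dual elements $(x_{1,i}\wedge\dots\wedge x_{n_s,i})^{\otimes e}$ have independent images, and then arguing that the corresponding $\lceil d_s/2\rceil$-th powers still have independent images under the balanced catalecticant. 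So far the comparison is in your favour in terms of structure. The problem is that the step you yourself flag as ``the main obstacle'' is exactly where the entire content of the proposition lives, and neither of your two proposed routes is carried out. Route 2's key assertion --- that the balanced choice of $h$ ``forces all the wedges $\omega_i$ to span a common $n_s$-plane'' --- \emph{is} the proposition, restated; no mechanism is offered. Concretely, grouping the summands by their top $n_s$-plane, $t = \sum_j \omega_j^{\otimes d_s}\otimes W_j$ with the $\omega_j$ pairwise non-proportional, the images of the test elements $(\omega_j^*)^{\otimes h}$ are $\sum_l \langle \omega_l,\omega_j^*\rangle^{h}\,\omega_l^{\otimes(d_s-h)}\otimes W_l$, and one must rule out both degeneration of the coefficient matrix and linear dependence among the vectors $\omega_l^{\otimes(d_s-h)}\otimes W_l$ (which genuinely can coincide when $d_s - h = 0$, cf.\ Remark~\ref{CounterAlg}); this is precisely the delicate point that the paper's proof addresses, however tersely.

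There is also a secondary gap in the range $h < e \leq d_s$. Your structural claim, as formulated, only forces $\xi$ to be Grassmann rank one, i.e.\ $\xi = \omega^{\otimes h}$ with $\omega$ decomposable; via Proposition~\ref{rango1Gqualunque} this settles all $e \leq h$, but for $e > h$ you invoke $\operatorname{rk}\mathcal{C}^{\mu_h,((e-h)^{n_s})}_{A_0} = 1$, which is a statement about $A_0$ that does not follow from the claim about $\xi$. Unlike the pure rectangular case, $\mathcal{C}^{\lambda,(e^{n_s})}_t$ and $\mathcal{C}^{\lambda,((d_s-e)^{n_s})}_t$ are not transposes of one another when $\mu_{d_s}$ is nonempty, so you cannot reduce $e > h$ to $e \leq h$ by duality. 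What is actually needed is the stronger conclusion $t = \omega^{\otimes d_s}\otimes B_0$, i.e.\ an argument that the exchange relations cutting $\mathbb{S}_{\lambda}V$ out of $\superwedge_{\lambda'}V$ propagate the plane $\omega$ across the remaining $d_s - h$ columns of length $n_s$ sitting inside $A_0$. Your Route 2 would deliver this stronger statement, but only because it already assumes the unproved forcing step; Route 1 as stated targets $\xi$ alone and would leave $e > h$ open.
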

\begin{proof}
Let us prove the contraposition, i.e. if there exists an $e \in \{1,\dots,d_s\}$ such that 

$$\operatorname{rk} \mathcal{C}^{\lambda,(e^{n_s})}_t > 1,\ \text{then}\ \operatorname{rk} \mathcal{C}^{\lambda,\left(\lceil \frac{d_s}{2} \rceil \right)^{n_s}}_t > 1. $$
Assume that for a certain $e \in \{1,\dots,d_s\}$ it happens that $\operatorname{rk} \mathcal{C}^{\lambda,(e^{n_s})}_t > 1$. From the hypothesis we can assume that the $\lambda$-rank of $t$ is at least $2$, otherwise it can be easily seen that the rank of $\mathcal{C}^{\lambda,(e^{n_s})}_t$ would be equal to $1$ which is against the hypothesis. Assume then that $t = t_1 + \dots + t_r$ has $\lambda$-rank $r \geq 2$, where every $t_i$ has $\lambda$-rank $1$ and it is written as

$$ t_i = (v_{1,i} \wedge \dots \wedge v_{n_s,i})^{\otimes d_s} \otimes \dots \otimes (v_{1,i} \wedge \dots \wedge v_{n_1,i})^{\otimes d_1}$$
for some vectors $v_{1,i},\dots,v_{n_s,i} \in V$, for all $i = 1,\dots,r$. The catalecticant map $\mathcal{C}^{\lambda,(e^{n_s})}_t$ clearly acts on the first products $(v_{1,i} \wedge \dots \wedge v_{n_s,i})^{\otimes d_s}$ of every $t_i$. Since the rank of such map is at least $2$, we can find at least two points $t_i$ and $t_j$ such that

$$(v_{1,i} \wedge \dots \wedge v_{n_s,i}) \neq (v_{1,j} \wedge \dots \wedge v_{n_s,j})$$
and also such that the images via the catalecticant map of the respective duals $(x_{1,i} \wedge \dots \wedge x_{n_s,i})^{\otimes e}$ and $(x_{1,j} \wedge \dots \wedge x_{n_s,j})^{\otimes e}$ are linearly independent. 
Consider now the catalecticant map 

$$\mathcal{C}^{\lambda,\left(\lceil \frac{d_s}{2} \rceil \right)^{n_s}}_t : \mathbb{S}_{\left(\lceil \frac{d_s}{2} \rceil \right)^{n_s}} V^* \longrightarrow \mathbb{S}_{\mu_{\lceil \frac{d_s}{2} \rceil}} V$$
and the elements $(x_{1,i} \wedge \dots \wedge x_{n_s,i})^{\otimes \lceil \frac{d_s}{2} \rceil}$ and $(x_{1,j} \wedge \dots \wedge x_{n_s,j})^{\otimes \lceil \frac{d_s}{2} \rceil}$. It is clear that they are linearly independent and that their images via $\mathcal{C}^{\lambda,\left(\lceil \frac{d_s}{2} \rceil \right)^{n_s}}_t$  are also linearly independent. Therefore we get that the rank of $\mathcal{C}^{\lambda,\left(\lceil \frac{d_s}{2} \rceil \right)^{n_s}}_t$ is at least $2$. By contraposition we get that if $\operatorname{rk} \mathcal{C}^{\lambda,\left(\lceil \frac{d_s}{2} \rceil \right)^{n_s}}_t = 1,$ then $\operatorname{rk} \mathcal{C}^{\lambda,(e^{n_s})}_t = 1$ for any $1 \leq e \leq d_s$.
This concludes the proof.
\end{proof}

\noindent Now we describe the algorithm which provides a lower bound of the $\lambda$-rank. Let $t \in \mathbb{S}_{\lambda} V$ and consider
the catalecticant map that removes half of the $d_s$ columns of maximal length from the diagram of $\lambda$, rounded up to the next integer if needed. Compute the rank of such a catalecticant map 

$$\mathcal{C}^{\lambda,(\lceil \frac{d}{2} \rceil^{n_s})}_t : \mathbb{S}_{(\lceil \frac{d}{2} \rceil^{n_s})} V^* \longrightarrow \mathbb{S}_{\mu_{\lceil \frac{d}{2} \rceil}} V,$$
where $\mu_{\lceil \frac{d}{2} \rceil}$ denotes the partition as in \eqref{FormulambdaU}. If the rank of the catalecticant is strictly greater than $1$, the algorithm stops and outputs this number. Such a number is a lower bound on the $\lambda$-rank of $t$. Otherwise, by Proposition \ref{PropCheServe} we get that

$$\operatorname{rk} \mathcal{C}^{\lambda,e^{n_s}}_t = 1,$$
for any $1 \leq e \leq d_s$. Hence we can consider the catalecticant with $e = d_s$ and its image will be generated by a unique element up to scalar multiplication. Such image is contained in $\mathbb{S}_{\mu_{d_s}} V$, where $\mu_{d_s}$ denotes the partition with $d_i$ columns of length $n_i$, with $i = 1,\dots,s-1$, accordingly with the notation used in \eqref{FormulambdaU}. Choose a generator $t_1$ of the image of the chosen catalecticant map. Then set $\lambda = \mu_{d_s}$ and $t = t_1$ and repeat the previous steps. \smallskip

\noindent Generalizing, at the $i$-th step we set $\lambda = \lambda_i$, where $\lambda_i$ is the diagram with $d_j$ columns of length $n_j$ for $j = 1,\dots,s-i+1$ , and we set $t = t_i$, where $t_i \in \mathbb{S}_{\lambda_i} V$ is the generator of the one dimensional image obtained from the previous step of the algorithm. Then compute 

$$\operatorname{rk} \left ( \mathcal{C}^{\lambda_i,\left \lceil \frac{d_{s-i+1}}{2} \right \rceil^{n_{s-j}}}_{t_i} \right ).$$ 
If it is strictly greater than $1$ the algorithm outputs this number and it stops. Otherwise consider the catalecticant map that removes all the columns of length $d_{s-i+1}$. Compute the generator $t_{i+1} \in \mathbb{S}_{\lambda_{i+1}} V$ of the image of this last catalecticant map, set $\lambda = \lambda_{i+1}$ and $t = t_{i+1}$, and move to the $(i+1)$-th step. 

\noindent If the rank of every catalecticant map we compute along the procedure is $1$, the algorithm outputs $1$. Obviously the output of the algorithm is an integer greater or equal than $1$ and it is a lower bound on the $\lambda$-rank of $t$. Indeed, preliminary we have

\begin{prop} \label{RanghiConsecutivi}
Consider the flag variety $\mathbb{F}(n_1,\dots,n_s;V)$ embedded with $\mathcal{O}(d_1,\dots,d_s)$ in $\mathbb{P}(\mathbb{S}_{\lambda} V)$. Let $t \in \mathbb{S}_{\lambda} V$ be any point. If $t$ has $\lambda$-rank $1$, then the Algorithm \ref{algoritmo2} outputs a $1$. The converse is true if $d_i \geq 2$ for any $i = 1,\dots,s$. 
\end{prop}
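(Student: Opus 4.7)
Suppose $t$ has $\lambda$-rank $1$, so
\[
t=(v_1\wedge\dots\wedge v_{n_s})^{\otimes d_s}\otimes\dots\otimes(v_1\wedge\dots\wedge v_{n_1})^{\otimes d_1}.
\]
A direct computation with the Schur apolarity action shows that $\mathcal{C}^{\lambda,(\lceil d_s/2\rceil)^{n_s}}_t(\alpha)$ is always a scalar multiple of the fixed element $(v_1\wedge\dots\wedge v_{n_s})^{\otimes(d_s-\lceil d_s/2\rceil)}\otimes(v_1\wedge\dots\wedge v_{n_{s-1}})^{\otimes d_{s-1}}\otimes\dots$, so the first catalecticant has rank $1$; the generator of the image of $\mathcal{C}^{\lambda,(d_s^{n_s})}_t$ is then $t_2=(v_1\wedge\dots\wedge v_{n_{s-1}})^{\otimes d_{s-1}}\otimes\dots\otimes(v_1\wedge\dots\wedge v_{n_1})^{\otimes d_1}$, which is of $\lambda_2$-rank $1$, and induction on $s$ gives that the algorithm outputs $1$.

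\textbf{Converse direction.} I induct on $s$, always assuming $d_i\geq 2$. In the base case $s=1$ with $\lambda=(d_1^{n_1})$, Proposition~\ref{PropCheServe} promotes the single rank-$1$ check at $\lceil d_1/2\rceil$ to all $e\leq d_1$, so that Proposition~\ref{rango1Gqualunque} yields $t$ of $(d_1^{n_1})$-rank $1$. For $s\geq 2$, Proposition~\ref{PropCheServe} again produces $\operatorname{rk}\mathcal{C}^{\lambda,(d_s^{n_s})}_t=1$; let $t_2\in\mathbb{S}_{\lambda_2}V$ be a generator of the image. The tail of the algorithm on $t$ is exactly the algorithm on $(\lambda_2,t_2)$, whose checks all return $1$ by hypothesis, so by induction $t_2$ has $\lambda_2$-rank $1$ and encodes a flag $V_1\subset\dots\subset V_{s-1}$.

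It remains to reconstruct an $n_s$-dimensional $V_s\supset V_{s-1}$ and to identify $t$ with the rank-$1$ tensor on the extended flag. Here the assumption $d_s\geq 2$ is essential: since it gives $d_s-\lceil d_s/2\rceil\geq 1$, the generator $t'$ of the one-dimensional image of $\mathcal{C}^{\lambda,(\lceil d_s/2\rceil)^{n_s}}_t$ lives in a Schur module still carrying columns of length $n_s$, hence still records an $n_s$-dimensional subspace. A second use of the induction argument (applied to $t'$, with the observation that a further catalecticant on $t'$ reproduces $t_2$) shows $t'$ is itself of $\mu_{\lceil d_s/2\rceil}$-rank $1$, so $t'=(u_1\wedge\dots\wedge u_{n_s})^{\otimes(d_s-\lceil d_s/2\rceil)}\otimes t_2$ for some vectors $u_1,\dots,u_{n_s}$; the Pl\"ucker relations defining $\mathbb{S}^\bullet V$ then force $V_{s-1}\subset V_s:=\langle u_1,\dots,u_{n_s}\rangle$. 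A final application of the Schur apolarity lemma, Theorem~\ref{LemmaSchurApo}, to the candidate rank-$1$ point $\tilde t=(u_1\wedge\dots\wedge u_{n_s})^{\otimes d_s}\otimes t_2$ via the ideal inclusion $I(\tilde t)\subset t^\perp$ identifies $t$ with $\tilde t$.

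The main obstacle is precisely this reconstruction: a priori the algorithm only guarantees that $\operatorname{Im}\mathcal{C}^{\lambda,(\lceil d_s/2\rceil)^{n_s}}_t$ is one-dimensional, which is weaker than $t'$ being a rank-$1$ point of a Flag variety, and upgrading one statement to the other requires an equivariant comparison of the two catalecticants together with the Pl\"ucker relations in $\mathbb{S}^\bullet V$. The necessity of $d_i\geq 2$ is illustrated by Example~\ref{EsempioFakeRk2}: when some $d_i=1$, the catalecticant image collapses the information about the corresponding subspace and a tensor of strictly larger rank can pass every check of the algorithm.
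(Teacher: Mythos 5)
Your forward direction matches the paper's and is fine. The converse is where things go wrong, and the gap sits exactly at the point where you invoke induction a second time. You induct on $s$, but the tensor $t'$ generating the image of $\mathcal{C}^{\lambda,(\lceil d_s/2\rceil)^{n_s}}_t$ lives in $\mathbb{S}_{\mu_{\lceil d_s/2\rceil}}V$, whose diagram still has columns of all $s$ lengths $n_1,\dots,n_s$ (precisely because $d_s-\lceil d_s/2\rceil\geq 1$, the feature you rely on); so ``a second use of the induction argument applied to $t'$'' is not an appeal to a smaller case of your induction --- it is an instance of the statement you are trying to prove. If you instead re-index the induction by the total number of columns, the new top multiplicity of $t'$ is $\lfloor d_s/2\rfloor$, which equals $1$ whenever $d_s\in\{2,3\}$, so $t'$ falls outside the hypothesis $d_i\geq 2$ that the converse requires; Remark \ref{CounterAlg} and Example \ref{EsempioFakeRk2} show this is exactly the regime where the conclusion can fail, so that hypothesis cannot be dropped for the intermediate tensor. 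Two further steps are asserted rather than proved: that a further catalecticant applied to $t'$ ``reproduces $t_2$'' (a factorization property of the catalecticant maps that needs an argument), and the ideal inclusion $I(\tilde t)\subset t^{\perp}$ needed to invoke Theorem \ref{LemmaSchurApo} --- establishing that inclusion is essentially equivalent to the claim itself, so it cannot be taken for granted.

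The paper avoids all of this by arguing directly on a rank decomposition: write $t=t_1+\dots+t_r$ with each $t_i$ of $\lambda$-rank $1$. The image of $\mathcal{C}^{\lambda,(e^{n_s})}_t$ lies in $\langle t_1^e,\dots,t_r^e\rangle$, and because the algorithm uses $e=\lceil d_s/2\rceil<d_s$, two summands with distinct largest subspaces would force two linearly independent elements into that image, contradicting its one-dimensionality; hence all the $t_i$ share the same largest subspace, the image of $\mathcal{C}^{\lambda,(d_s^{n_s})}_t$ is generated by $t_1^{d_s}+\dots+t_r^{d_s}$, and iterating the same argument down the flag forces $t_1=\dots=t_r$. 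If you want to keep your inductive framing, you need to replace the reconstruction via $t'$, $\tilde t$ and the apolarity lemma with this ``all summands share the top subspace'' step, carried out on an explicit decomposition of $t$.
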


\begin{proof}
Assume that $t$ has $\lambda$-rank $1$, i.e.

$$ t = (v_1 \wedge \dots \wedge v_{n_s})^{\otimes d_s} \otimes \dots \otimes (v_1 \wedge \dots \wedge v_{n_1})^{\otimes d_1} $$
for some $v_i \in V$. The image of the first catalecticant map of the algorithm, i.e. the one determined by $e=1$ and the partition $(1^{n_s})$, is the span of

$$ (v_1 \wedge \dots \wedge v_{n_{s}})^{\otimes d_{s}-1} \otimes (v_1 \wedge \dots \wedge v_{n_{s-1}})^{\otimes d_{s-1}} \otimes \dots \otimes (v_1 \wedge \dots \wedge v_{n_1})^{\otimes d_1}$$
and hence the map has rank $1$ since $t$ is non zero. The same happens for the next steps until $e = \lfloor \frac{d_s}{2} \rfloor$. Therefore consider the catalecticant map that removes all the first $d_s$ columns and consider the only generator $t_{s-1}$ of its image

$$ t_{s-1} = (v_1 \wedge \dots \wedge v_{n_{s-1}})^{\otimes d_{s-1}} \otimes \dots \otimes (v_1 \wedge \dots \wedge v_{n_1})^{\otimes d_1}. $$
At this point set $t = t_{s-1}$ and $\lambda = \mu_{d_s}$ and repeat the previous steps. It is obvious that the algorithm will not stop when computing the rank of any catalecticant map since any such number is equal to $1$.  Therefore it eventually outputs $1$.

\noindent For the converse part, assume that $d_i \geq 2$ for any $i$ and suppose that the output of the algorithm is $1$. Suppose that $t$ has a $\lambda$-rank $r$ decomposition $t = t_1 + \dots + t_r$, with $t_i$ of $\lambda$-rank $1$. We may assume that

$$ t_i = (v_{1,i} \wedge \dots \wedge v_{n_s,i})^{\otimes d_s} \otimes \dots \otimes (v_{1,i} \wedge \dots \wedge v_{n_1,i})^{\otimes d_1}$$
for some vectors $v_{1,i},\dots,v_{n_s,i} \in V$, for all $i = 1,\dots,r$. For any $1 \leq e \leq d_s$, the image of the catalecticant map is one dimensional by Proposition \ref{PropCheServe} and it is contained in $\langle t_1^e,\dots,t_r^e \rangle$, where $t_i^e$ denotes

$$ t_i^e = (v_{1,i} \wedge \dots \wedge v_{n_s,i})^{\otimes d_s-e} \otimes \dots \otimes (v_{1,i} \wedge \dots \wedge v_{n_1,i})^{\otimes d_1}$$
for all $i = 1,\dots, r$. We can assume that the element $(x_{1,1} \wedge \dots \wedge x_{n_s,1})^{\otimes e}$, defined taking the dual elements $x_{i,j}$ to the vectors appearing in the biggest subspace associated to $t_1$, is not apolar to $t$. Hence its image is a generator of the one dimensional image of the respective catalecticant map. If such an element of $\mathbb{S}_{(e^{n_s})} V^*$ is the only one with this property, then we can already conclude that the biggest subspace must be the same for any $t_i$. Otherwise, if we could find another element defined in the same way as $(x_{1,1} \wedge \dots \wedge x_{n_s,1})^{\otimes e}$ but using this time the other $t_i$'s, then its image must be a scalar multiple of the image we have already obtained. Hence a certain linear combination of these two elements is apolar to $t$. On the other hand in the respective images of the two selected elements of $\mathbb{S}_{(e^{n_s})} V^*$ there are also the tensors $t_1^e$ and $t_i^e$ which are linearly independent, unless $e = d_s$ and $t_1^{d_s} = t_i^{d_s}$ which happens only if the points $t_1$ and $t_i$ share the same remaining part of the flag. However, since we are assuming $d_s > 1$, and since the algorithm is settled to pick $\lceil \frac{d_s}{2} \rceil$, we are always considering $e < d_s$ that allows to avoid such a problem. Hence, if the rank of the catalecticant is $1$, we get that all the subspaces are the same. Therefore the image of the catalecticant map is one dimensional and is generated by $t' = t_1^{d_s} + \dots + t_r^{d_s}$. At this point the proof is just a repetition of the previous reasoning until one arrives to get $t_1 = \dots = t_r$, i.e. $t$ has $\lambda$-rank $1$. This concludes the proof. 
\end{proof}

\begin{cor} \label{RanghiConsecutivi2}
Let $t \in \mathbb{S}_{\lambda} V$ and suppose that the output of the Algorithm \ref{algoritmo2} applied to $t$ is $r$. Then $t$ has $\lambda$-rank greater or equal than $r$.
\end{cor}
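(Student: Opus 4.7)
\medskip

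\noindent\textbf{Proof plan.} The plan is to mimic the descent argument in the proof of Proposition \ref{RanghiConsecutivi} while tracking quantitative upper bounds coming from a hypothetical decomposition of $t$. Suppose $t$ admits a $\lambda$-rank decomposition $t=p_1+\dots+p_s$ with each $p_j\in X$ of $\lambda$-rank $1$. Two ingredients will be used repeatedly: (i) the catalecticant $u\mapsto \mathcal{C}^{\lambda,\mu}_u$ depends linearly on $u$, so $\operatorname{rk}\mathcal{C}^{\lambda,\mu}_{u_1+\dots+u_m} \leq \sum_{j=1}^m \operatorname{rk}\mathcal{C}^{\lambda,\mu}_{u_j}$; (ii) whenever $p\in \mathbb{S}_{\lambda} V$ has $\lambda$-rank $1$ and $\mu=(e^{n_{s-i+1}})$ is one of the rectangular sub-diagrams of $\lambda_i$ used by the algorithm, one has $\operatorname{rk}\mathcal{C}^{\lambda_i,\mu}_{p^{(i)}}\leq 1$, where $p^{(i)}$ denotes the trimmed descendant of $p$ after the first $i-1$ rounds of column removal. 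The verification of (ii) is essentially the same computation as in Proposition \ref{rango1Gqualunque} and the first half of the proof of Proposition \ref{RanghiConsecutivi}: plugging in the explicit flag form of $p^{(i)}$, the image of the catalecticant is spanned by the single decomposable tensor obtained by trimming the first $e$ copies of its longest factor.

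\medskip

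\noindent With these two ingredients in hand, I would show by induction on the step index $i$ that the tensor $t_i\in\mathbb{S}_{\lambda_i} V$ produced by the algorithm admits a decomposition
$$ t_i \;=\; \sum_{j=1}^{s} c_{i,j}\,p_j^{(i)}, \qquad c_{i,j}\in\mathbb{K}, $$
into at most $s$ tensors of $\lambda_i$-rank $1$. The base $i=1$ is immediate with $c_{1,j}=1$ and $p_j^{(1)}=p_j$. For the inductive step, if the algorithm does not halt at step $i$, then by Proposition \ref{PropCheServe} the image of $\mathcal{C}^{\lambda_i,(d_{s-i+1}^{n_{s-i+1}})}_{t_i}$ is one dimensional, and $t_{i+1}$ is a nonzero scalar multiple of $\mathcal{C}^{\lambda_i,(d_{s-i+1}^{n_{s-i+1}})}_{t_i}(\xi_i)$ for some dual element $\xi_i$ chosen so that the image is nonzero. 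By linearity and the inductive expression for $t_i$,
$$ t_{i+1} \;=\; \sum_{j=1}^{s} c_{i,j}\,\mathcal{C}^{\lambda_i,(d_{s-i+1}^{n_{s-i+1}})}_{p_j^{(i)}}(\xi_i), $$
and each term on the right is a scalar multiple of the trimmed rank-$1$ tensor $p_j^{(i+1)}$ by ingredient (ii). Setting $c_{i+1,j}$ to be the corresponding scalar yields the claimed decomposition of $t_{i+1}$, completing the induction.

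\medskip

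\noindent Suppose the algorithm outputs $r$ at step $k$, so that $r=\operatorname{rk}\mathcal{C}^{\lambda_k,(\lceil d_{s-k+1}/2\rceil^{n_{s-k+1}})}_{t_k}$. Combining the inductive decomposition of $t_k$ with ingredients (i) and (ii) yields
$$ r \;=\; \operatorname{rk}\mathcal{C}^{\lambda_k,(\lceil d_{s-k+1}/2\rceil^{n_{s-k+1}})}_{t_k} \;\leq\; \sum_{j=1}^{s}\operatorname{rk}\mathcal{C}^{\lambda_k,(\lceil d_{s-k+1}/2\rceil^{n_{s-k+1}})}_{p_j^{(k)}} \;\leq\; s, $$
so the $\lambda$-rank of $t$ is at least $r$. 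The main technical step is the clean formulation and proof of ingredient (ii), namely that trimming preserves the rank-$1$ property at every intermediate level and keeps the associated rectangular catalecticant of rank at most one; once this is extracted from the proofs of Propositions \ref{rango1Gqualunque} and \ref{RanghiConsecutivi}, the rest of the argument is bookkeeping through the recursion defining the algorithm.
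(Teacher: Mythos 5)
Your proposal is correct and follows essentially the same route the paper intends: the paper leaves the corollary without an explicit proof, relying implicitly on the subadditivity of catalecticant rank together with the fact (established in the first half of Proposition \ref{RanghiConsecutivi}) that a $\lambda$-rank-$1$ tensor and all of its trimmed descendants have rectangular catalecticants of rank at most one. Your inductive bookkeeping, showing that each intermediate tensor $t_i$ inherits a decomposition into at most $s$ rank-$1$ tensors so that the first catalecticant of rank $r>1$ forces $s\geq r$, is exactly the argument the paper takes for granted, written out in full.
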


\noindent We describe briefly the Algorithm \ref{algoritmo2} \index{algorithm!that computes a lower bound on the $\lambda$-rank} in general for any $t \in \mathbb{S}_{\lambda} V$. \medskip

\hrule \noindent {\bf Algorithm \ref{algoritmo2}.} \hrule \medskip

\noindent {\bf Input}: A partition $\lambda$ and an element $t \in \mathbb{S}_{\lambda} V$, where the related minimal orbit inside the projectivization of the space is $X = \left(\mathbb{F}(n_1,\dots,n_s;V),\mathcal{O}(d_1,\dots,d_s)\right)$.

\noindent {\bf Output}: A lower bound of the $\lambda$-rank of $t$.
\smallskip

\begin{enumerate}[nosep, label = \arabic*)]
\item set $i =s$;
\item set $r = 0$;
\item \label{Sicomincia} {\bf if} $i = 0$, {\bf then}
\item $\quad$ {\bf print} $1$ {\bf and exit};
\item set $r = \operatorname{rk}\mathcal{C}^{\lambda,(\lceil \frac{d_i}{2} \rceil^{n_i})}_t$;
\item {\bf if} $r >1$, {\bf then}
\item $\quad$ {\bf print} $r$ and {\bf exit};
\item consider the map $\mathcal{C}^{\lambda,(d_i^{n_i})}_t$ and compute the only generator $t'$ of the image;
\item set $i = i-1$, $t = t'$ and $\lambda = \lambda_{i-1}$ where this last partition is the one with $d_j$ columns of length $n_j$, for $j = 1,\dots,i-1$. Come back to \ref{Sicomincia};
\end{enumerate}
\end{algorithm}
{\hrule \medskip}

\begin{remark} \label{CounterAlg}
Let us highlight that if the Algorithm \ref{algoritmo2} outputs $1$, obviously this does not imply that $t$ has $\lambda$-rank $1$. Indeed, consider as an example of this phenomenon the partition $\lambda = (5,3,1,1)$ and the tensor $t \in \mathbb{S}_{\lambda} V$

$$ t = v_1 \wedge v_2 \wedge v_3 \wedge v_4 \otimes v_1 \wedge v_2 \otimes v_1 + v_1 \wedge v_2 \wedge v_5 \wedge v_6 \otimes v_1 \wedge v_2 \otimes v_1. $$
The output of the Algorithm \ref{algoritmo2} in this case is $1$. This is due to the fact that the two points share the same partial flag $\langle v_1 \rangle \subset \langle v_1,\ v_2\rangle$. Hence the kernel of the first catalecticant map of the algorithm contains in particular the element $x_1 \wedge x_2 \wedge x_3 \wedge x_4 $ $-x_1 \wedge x_2 \wedge x_5 \wedge x_6$. Nonetheless, it is obvious that $t$ has $\lambda$-rank $2$. 
\end{remark}

\begin{example}[Example \ref{EsempioFakeRk2} reprise]
Consider the tensor

$$ t = v_1 \wedge v_2 \wedge v_3 \otimes v_1 \wedge v_2 \otimes v_1 + v_1 \wedge v_2 \wedge v_3 \otimes v_2 \wedge v_3 \otimes v_3 \in \mathbb{P}(\mathbb{S}_{(3,2,1)} \mathbb{C}^4). $$
Following the notation of the Algorithm \ref{algoritmo2}, when $i=3$, then $\lambda_0 = (3,2,1)$ and the first catalecticant map is 

$$ \mathcal{C}^{(3,2,1),(1,1,1)}_t : \mathbb{S}_{(1,1,1)} (\mathbb{C}^4)^* \longrightarrow \mathbb{S}_{(2,1)} \mathbb{C}^4. $$
As we have already remarked this map has rank $1$ so the algorithm can continue. The generator of the image is 

$$ t_3 = v_1 \wedge v_2 \otimes v_1 + v_2 \wedge v_3 \otimes v_3. $$
Set $\lambda_1 = (2,1)$, $i=2$ and $t=t_1$ and restart the algorithm. This time we consider the catalecticant map

$$ \mathcal{C}^{(2,1),(1,1)}_t : \mathbb{S}_{(1,1)} (\mathbb{C}^4)^* \longrightarrow \mathbb{S}_{(1)} \mathbb{C}^4.$$
It is easy to see that this time the rank is $2$. Hence the algorithm stops and the output is $(r_0,r_1) = (1,2)$. As Proposition \ref{RanghiConsecutivi} is going to tell us in a moment, we get that $t$ has $\lambda$-rank at least $2$. 
\end{example}

We now use the procedure to investigate the $\lambda$-rank of a tensor. 

\begin{prop} \label{RanghiConsecutivi}
Consider the flag variety $\mathbb{F}(k_1,\dots,k_s;n)$ embedded with $\mathcal{O}(d_1,\dots,d_s)$ in $\mathbb{P}(\mathbb{S}_{\lambda} \mathbb{C}^n)$. Let $t \in \mathbb{S}_{\lambda} \mathbb{C}^n$ be any point. Then $t$ has $\lambda$-rank $1$ if and only if the output of the Algorithm \ref{algoritmo2} is a sequence of $1'$s of length $s$. As a consequence, if the last integer of the sequence obtained with the Algorithm \ref{algoritmo2} is $r \neq 1$, then $t$ has $\lambda$-rank at least $r$.
\end{prop}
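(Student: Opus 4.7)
The plan is to establish the equivalence by induction on $s$, with each inductive step corresponding to one iteration of Algorithm \ref{algoritmo2}. The base case $s=1$ is exactly the Grassmannian situation handled by Proposition \ref{rango1Gqualunque}. Throughout, Proposition \ref{PropCheServe} provides the crucial bridge between the ``halfway'' catalecticant $\mathcal{C}^{\lambda,(\lceil d_s/2\rceil^{k_s})}_t$ that the algorithm actually tests and the ``full'' catalecticant $\mathcal{C}^{\lambda,(d_s^{k_s})}_t$ whose image generator is fed into the next iteration.

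For the direct implication, I would start from the canonical rank-$1$ expression $t = (v_1 \wedge \dots \wedge v_{k_s})^{\otimes d_s} \otimes \cdots \otimes (v_1 \wedge \dots \wedge v_{k_1})^{\otimes d_1}$ and compute the relevant catalecticant directly from Definition \ref{Schurapodef}: its image is spanned by the tensor obtained by trimming $\lceil d_s/2\rceil$ copies of the top wedge, hence has rank $1$. Taking the generator $t'$ of the image of $\mathcal{C}^{\lambda,(d_s^{k_s})}_t$, one obtains an element of $\mathbb{S}_{\mu_{d_s}}V$ that is again of rank $1$ but with one fewer layer in the flag, so the inductive hypothesis supplies the remaining $s-1$ ones.

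For the converse, suppose the algorithm produces $s$ consecutive ones on $t$, and let $t = t_1 + \dots + t_r$ be a minimal rank decomposition with $t_i = (v_{1,i} \wedge \dots \wedge v_{k_s,i})^{\otimes d_s} \otimes (\text{rest}_i)$. By Proposition \ref{PropCheServe}, $\operatorname{rk}\mathcal{C}^{\lambda,(d_s^{k_s})}_t=1$, so its image is one-dimensional with generator $t' \in \mathbb{S}_{\mu_{d_s}}V$, which by the recursive structure satisfies a length-$(s-1)$ sequence of ones and hence has $\mu_{d_s}$-rank $1$ by induction. To lift this conclusion back to $t$, test $\mathcal{C}^{\lambda,(\lceil d_s/2\rceil^{k_s})}_t$ against the dual elements $(x_{1,i} \wedge \dots \wedge x_{k_s,i})^{\otimes \lceil d_s/2\rceil}$: their images lie in $\langle (v_{1,i} \wedge \dots \wedge v_{k_s,i})^{\otimes d_s - \lceil d_s/2\rceil} \otimes (\text{rest}_i)\rangle$, and the one-dimensionality of the image combined with the linear independence of distinct top wedges (the same argument used in the proof of Proposition \ref{RanghiConsecutivi} given inside Algorithm \ref{algoritmo2}) forces all subspaces $\langle v_{1,i},\dots,v_{k_s,i}\rangle$ to coincide. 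Factoring out this common wedge reduces $t$ to a tensor on the smaller flag, to which induction applies.

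The ``as a consequence'' part follows from the subadditivity of catalecticant rank under sums: if $t = t_1 + \dots + t_{r'}$ is a $\lambda$-rank decomposition, then $\operatorname{rk}\mathcal{C}^{\lambda,\mu}_t \leq \sum_i \operatorname{rk}\mathcal{C}^{\lambda,\mu}_{t_i} = r'$ at every step of the algorithm, so if the algorithm outputs $r \neq 1$, necessarily $r' \geq r$. The step I expect to be hardest is the subspace-coincidence argument in the converse, since separating the images of distinct top wedges via test elements $(x_{1,i} \wedge \dots \wedge x_{k_s,i})^{\otimes \lceil d_s/2\rceil}$ implicitly needs $\lceil d_s/2\rceil < d_s$; when $d_s=1$ this fails, as the counterexample in Remark \ref{CounterAlg} shows, so either the hypothesis $d_i \geq 2$ from the earlier version of Proposition \ref{RanghiConsecutivi} must be invoked or the degenerate case must be treated separately.
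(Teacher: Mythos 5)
Your proposal follows essentially the same route as the paper: the forward direction by direct computation of the catalecticant images of a rank-$1$ tensor, the converse by recursing through the steps of Algorithm \ref{algoritmo2} and forcing all rank-$1$ summands to share the same biggest subspace at each stage, with Proposition \ref{PropCheServe} bridging the tested catalecticant $\mathcal{C}^{\lambda,(\lceil d_s/2\rceil^{k_s})}_t$ and the full one $\mathcal{C}^{\lambda,(d_s^{k_s})}_t$, and the lower bound by subadditivity of catalecticant ranks. So in structure you have reproduced the paper's argument.

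The caveat you raise at the end is not merely a worry about a hard step: it is a genuine defect in the statement as printed, and you are right to insist on it. The paper's own Remark \ref{CounterAlg} exhibits a tensor (for $\lambda=(5,3,1,1)$, where $d_s=1$) whose algorithm output is a sequence of $1$'s but whose $\lambda$-rank is $2$; this directly contradicts the ``if'' direction of the proposition in question. The paper's proof of the converse asserts that if two summands $p_i$, $p_j$ have different biggest subspaces then ``it is easy to see that the rank of the map will be at least $2$,'' but this is exactly what fails in that example: the two images under the dual test elements coincide because the summands share the same remaining partial flag, which can happen precisely when the catalecticant removes \emph{all} $d_s$ columns at once, i.e.\ when $\lceil d_s/2\rceil = d_s$, i.e.\ when $d_s=1$. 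The earlier version of Proposition \ref{RanghiConsecutivi} (stated inside Algorithm \ref{algoritmo2}) handles this correctly by adding the hypothesis $d_i\geq 2$ and by explicitly arguing that $e<d_s$ avoids the degenerate coincidence; the final restatement drops that hypothesis without justification. Your proof is therefore sound exactly where the paper's is, and incomplete exactly where the paper's is; to close it you should either assume $d_i\geq 2$ for all $i$ or weaken the conclusion of the ``if'' direction when some $d_i=1$. The ``as a consequence'' lower-bound part is unaffected, since it only uses subadditivity and the forward direction.
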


\begin{proof}
Assume that $t$ has $\lambda$-rank $1$, i.e.

$$ t = (v_1 \wedge \dots \wedge v_{k_s})^{\otimes d_s} \otimes \dots \otimes (v_1 \wedge \dots \wedge v_{k_1})^{\otimes d_1}. $$
Then it is easy to see that the catalecticant map computed by the algorithm has rank $1$, where at every step $t$ is set equal to 

$$t_i = (v_1 \wedge \dots \wedge v_{k_i})^{\otimes d_i} \otimes \dots \otimes (v_1 \wedge \dots \wedge v_{k_1})^{\otimes d_1}$$
for all $i $ starting from $s$ to $1$. Hence the output will be a sequence of $1$'s of length $s-1$. 
\noindent On the other hand assume that the output of the Algorithm \ref{algoritmo2} is a sequence of $1$'s of length $s-1$. Suppose $t = p_1 + \dots + p_r$, where every $p_i$ has $\lambda$-rank $1$. Hence each $p_i$ represents a flag of subspaces, each of them repeated a certain number of times. Assume that $r > 1$. Then we have either two alternatives. All the $p_i$'s share the same biggest subspace and this is coherent with the hypothesis. Otherwise there are at least a $p_i$ and a $p_j$ such that the respective two biggest subspaces are different. If this is the case, then, up to choosing a suitable basis, it is easy to see that the rank of the map will be at least $2$. This contradicts the hypothesis and hence all the $p_i$ must share the same biggest subspace. Now repeating this argument at every step of the algorithm one obtains that $t = p_1 = \dots = p_r$, i.e. $t$ has $\lambda$-rank $1$.
\end{proof}

\section{Secant varieties of Flag varieties} \label{sestasez}  \setcounter{equation}{0}\medskip

This section is devoted to the study of the $\lambda$-rank appearing on the second secant variety to a Flag variety. An algorithm collecting all the results obtained will follow.
\medskip

\subsection{The case $\lambda = (2,1^{k-1})$.} \setcounter{equation}{0}\medskip

The first case we consider is given by Flag varieties $\mathbb{F} = \mathbb{F}(1,k;\mathbb{C}^n)$ embedded in $\mathbb{P}(\mathbb{S}_{(2,1^{k-1})} \mathbb{C}^n)$. We will see that such varieties are related to the adjoint varieties $\mathbb{F}(1,k;\mathbb{C}^{k+1})$. Only in this section we will refer to the $(2,1^{k-1})$-rank of a tensor simply with rank.

\begin{definition} Given a non degenerate variety $X \subset \mathbb{P}^N$, we use the following definition for the {\it tangential  variety} of $X$

$$\tau (X) = \bigcup_{p \in X} T_p X$$

\noindent where $T_p X$ denotes the tangent space to $X$ at $p$. 
\end{definition}
In order to study the elements appearing on the second secant variety of $\mathbb{F}$, we have to understand which ranks appear on the tangential variety of $X$. Since $\mathbb{F}$ is homogeneous, we may reduce to study what happens on a single tangent space. By virtue of this fact, choose $p$ as the highest weight vector of this representation, i.e.

\begin{equation} \label{hwtang} p = v_1 \wedge \dots \wedge v_k \otimes v_1 \end{equation}

\noindent for some $v_i \in \mathbb{C}^n$. This element may be represented with the sstd tableau

\begin{center} \begin{ytableau}
1 & 1 \\ 2 \\ \vdots \\ k
\end{ytableau} .\end{center}

\noindent Recall the following classic result.
Let $p=v_1 \wedge \dots \wedge v_k$ be a point of $\mathbb{G}(k, V) \subset \mathbb{P} (\superwedge^k V)$. Then 

\begin{equation} \label{TangGrass} \widehat{T_p \mathbb{G}(k, V)} = \sum_{i=1}^k v_1 \wedge \dots \wedge v_{i-1} \wedge V \wedge v_{i+1} \wedge \dots \wedge v_k. \end{equation}

\begin{prop}
Let $p \in \mathbb{F}$ be the highest weight vector in \eqref{hwtang}. The cone over the tangent space $T_p \mathbb{F}$ to $\mathbb{F}$ at $p$ is the subspace 
\begin{align*}
\langle &v_1 \wedge \dots \wedge v_k \otimes v_1,\ v_1 \wedge \dots \wedge v_{i-1} \wedge v_h \wedge v_{i+1} \wedge \dots \wedge v_k \otimes v_1,\ i \in \{2,\dots,k\}, \\ &v_1 \wedge \dots \wedge v_k \otimes v_h + v_h \wedge v_2 \dots \wedge v_k \otimes v_1,\ h \in \{1,\dots,n \}\rangle \subset \mathbb{S}_{(2,1^{k-1})} \mathbb{C}^n.
\end{align*}
\end{prop}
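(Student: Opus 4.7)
The plan is to compute the tangent space by differentiating a generic smooth curve through $p$ in the cone $\widehat{\mathbb{F}}$ and then read off the generators. Since $\mathbb{F}=\mathbb{F}(1,k;\mathbb{C}^n)$ consists of pairs $(L,W)$ with $L\subset W$ of dimensions $1$ and $k$, a curve in $\widehat{\mathbb{F}}$ through $p=v_1\wedge\dots\wedge v_k\otimes v_1$ can be parametrized as
\[
p(t) \;=\; w_1(t)\wedge w_2(t)\wedge\dots\wedge w_k(t)\;\otimes\; w_1(t),
\]
where $w_1(t)$ spans the moving line $\ell(t)$, the $w_i(t)$ span the moving $k$-plane $W(t)$, and $w_i(0)=v_i$. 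Writing $w_i(t)=v_i+tu_i+O(t^2)$ with $u_i\in\mathbb{C}^n$ arbitrary, this covers every smooth curve in $\widehat{\mathbb{F}}$ through $p$, so the cone $\widehat{T_p\mathbb{F}}$ is spanned by $p$ itself together with all $p'(0)$ as the $u_i$ range over $\mathbb{C}^n$.

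Next I would apply Leibniz to the wedge and the tensor product to obtain
\[
p'(0) \;=\; \sum_{i=1}^{k} v_1\wedge\dots\wedge v_{i-1}\wedge u_i\wedge v_{i+1}\wedge\dots\wedge v_k\otimes v_1 \;+\; v_1\wedge\dots\wedge v_k\otimes u_1.
\]
I would then decompose each $u_i$ in the basis $\{v_1,\dots,v_n\}$ and simplify summand by summand using that a wedge with two equal entries vanishes. Concretely, for $i\in\{2,\dots,k\}$ and $u_i=v_h$: the choice $h\in\{1,\dots,k\}\setminus\{i\}$ kills the term, $h=i$ returns a scalar multiple of $p$, and $h\in\{k+1,\dots,n\}$ produces the second family of generators in the statement. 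For $i=1$, the two pieces $u_1\wedge v_2\wedge\dots\wedge v_k\otimes v_1$ and $v_1\wedge\dots\wedge v_k\otimes u_1$ must be grouped together (they depend on the same vector $u_1$), and setting $u_1=v_h$ yields exactly the third family of generators.

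For the reverse inclusion I would exhibit, for each listed generator, a choice of the $u_i$ that realizes it as some $p'(0)$: take all $u_i=0$ for the vector $p$; take $u_j=v_h$ and the remaining $u_i=0$ to realize the $i$th generator of the second family; take $u_1=v_h$ and the remaining $u_i=0$ for the third family. This gives equality between the stated span and $\widehat{T_p\mathbb{F}}$.

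The computation itself is not deep; the only subtlety that I expect to require some care is the bookkeeping for the $i=1$ index, since $w_1(t)$ appears twice in $p(t)$ (once inside the $k$-fold wedge and once in the $v$ factor), so its derivative contributes \emph{both} the wedge term at position $1$ and the second-factor term, and these must be packaged as a single symmetric generator of the third type rather than split. Everything else reduces to recognizing that wedge summands collapse whenever a factor is repeated.
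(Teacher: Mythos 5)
Your proposal is correct, but it proves the statement by a genuinely different route than the paper. You parametrize the affine cone directly as $p(t)=w_1(t)\wedge\dots\wedge w_k(t)\otimes w_1(t)$ and differentiate, reading off the three families of generators from the Leibniz expansion after expanding each $u_i$ in the basis; the paper instead invokes the identity $\widehat{T_p\mathbb{F}}=\widehat{T_p\left(\mathbb{G}(1,\mathbb{C}^n)\times\mathbb{G}(k,\mathbb{C}^n)\right)}\cap\mathbb{S}_{(2,1^{k-1})}\mathbb{C}^n$ from \cite{freire2019secant}, uses the classical tangent-space formula for Grassmannians, checks which candidate generators satisfy the Pl\"ucker relations \eqref{PluckRel} (identifying them with sstd tableaux), and closes with a dimension count against $\dim\mathbb{F}=-k^2+kn+k-1$. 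Your approach is more elementary and self-contained: it needs no external citation, no verification of Pl\"ucker relations, and it explains \emph{why} the third family comes packaged as the symmetric sum (the vector $w_1$ occurs in both tensor factors, exactly the subtlety you flag). What the paper's route buys is that it never has to argue that the parametrization captures all tangent directions: your forward inclusion rests on the assertion that every smooth curve in $\widehat{\mathbb{F}}$ through $p$ lifts to a curve of $k$-tuples with $w_i(0)=v_i$, which is true (the map $(w_1,\dots,w_k)\mapsto w_1\wedge\dots\wedge w_k\otimes w_1$ is a submersion onto the punctured cone at $(v_1,\dots,v_k)$) but should be stated; alternatively you can sidestep it entirely by observing that your listed generators already span a subspace of dimension $1+(k-1)(n-k)+(n-1)=kn-k^2+k=\dim\mathbb{F}+1$ inside $\widehat{T_p\mathbb{F}}$, which forces equality — the same dimension count the paper uses.
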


\begin{proof}
\noindent By definition we have the inclusion $\mathbb{F} \subset \mathbb{G}(1,\mathbb{C}^n) \times \mathbb{G}(k,\mathbb{C}^k)$. By \cite[Prop. 4.4]{freire2019secant}, we have the equality

$$ \widehat{T_p \mathbb{F}} = \left ( \widehat{T_p \mathbb{G}(1,\mathbb{C}^n) \times \mathbb{G}(k,\mathbb{C}^k)} \right ) \cap \mathbb{S}_{(2,1^{k-1})} \mathbb{C}^n$$

\noindent where $\hat{Y}$ denotes the affine cone over the projective variety $Y$. Applying Formula \eqref{TangGrass} we see that $\widehat{T_p \left(\mathbb{G}(1,\mathbb{C}^n) \times \mathbb{G}(k,\mathbb{C}^k)\right)}$ is the subspace 

\begin{align*}
\langle v_1 &\wedge \dots \wedge v_k \otimes v_1,\ v_1 \wedge \dots \wedge v_k \otimes v_2,\dots, v_1 \wedge \dots \wedge v_k \otimes v_n,
\\ &v_1 \wedge \dots \wedge v_{i-1} \wedge v_h \wedge v_{i+1} \wedge \dots \wedge v_k \otimes v_1,\ h \in \{k+1,\dots,n\},\ i \in \{1,\dots,k\} \rangle.
\end{align*}

\noindent It is easy to see that if $i \neq 1$, the elements 
$$v_1 \wedge \dots \wedge v_k \otimes v_1,\ v_1 \wedge \dots \wedge v_{i-1} \wedge v_h \wedge v_{i+1} \wedge \dots \wedge v_k \otimes v_1$$
with $h \in \{k+1,\dots,n\}$ satisfy the relations \eqref{PluckRel}. Indeed, they are, up to the sign, the elements of the Schur module determined by the sstd tableaux

\begin{equation} \label{TangFlag1} \begin{ytableau}
1 & 1 \\ 2 \\ \vdots \\ k
\end{ytableau}\quad  \text{and}\quad  \begin{ytableau}
1 & 1 \\ 2 \\ \vdots \\ \hat{i} \\ \vdots \\ k \\ h
\end{ytableau}\end{equation}

\noindent respectively, where $\hat{i}$ means that $i$ is not appearing in the list. We can see also that the elements

$$ v_1 \wedge \dots \wedge v_k \otimes v_h + v_h \wedge v_2 \dots \wedge v_k \otimes v_1$$

\noindent satisfy the equations \eqref{PluckRel} for any $h=2,\dots,n$ and hence they belong to the module. Indeed they are the elements associated to the sstd tableaux

\begin{equation} \label{TangFlag2} \begin{ytableau}
1 & h \\ 2 \\ \vdots  \\ k
\end{ytableau}\ . \end{equation}

\noindent Consider then the span of the elements of the Schur modules whose associated sstd tableaux is either in \eqref{TangFlag1} or in \eqref{TangFlag2}. Since they are all different, the respective elements of the module are linearly independent. Moreover the number of elements in \eqref{TangFlag1} is $(k-1)(n-k)+1$, and those in \eqref{TangFlag2} are $n-1$ , for a total of $-k^2 + kn + k$. Since the variety $\mathbb{F}$ is smooth of dimension $-k^2+kn+k-1$, we can conclude that $\widehat{T_p \mathbb{F}}$ is the subspace
\begin{align*}
\langle &v_1 \wedge \dots \wedge v_k \otimes v_1,\ v_1 \wedge \dots \wedge v_{i-1} \wedge v_h \wedge v_{i+1} \wedge \dots \wedge v_k \otimes v_1,\ i \in \{2,\dots,k\}, \\ &v_1 \wedge \dots \wedge v_k \otimes v_h + v_h \wedge v_2 \dots \wedge v_k \otimes v_1,\ h \in \{1,\dots,n \}\rangle.
\end{align*}\vskip-0.6cm\end{proof}

\noindent In order to understand which ranks appear in this space, we split the generators of $T_p \mathbb{F}$ in the three following sets:\begin{enumerate}[label = (\arabic*)]
\item \label{item1} $v_1 \wedge \dots \wedge v_k \otimes v_1$,
\item \label{item2} $v_1 \wedge \dots \wedge v_{i-1} \wedge v_h \wedge v_{i+1} \wedge \dots \wedge v_k \otimes v_1$, for $i = 2,\dots,k$ and $h = k+1,\dots,n$,
\item \label{item3} $v_1 \wedge \dots \wedge v_k \otimes v_h + v_h \wedge v_2 \dots \wedge v_k \otimes v_1$ for $h = 2,\dots,n$.
\end{enumerate}
The elements from \ref{item1} and \ref{item2} have rank $1$ and they represent the flags 
$$ \langle v_1 \rangle \subset \langle v_1,\dots,v_k \rangle  $$ 
\noindent and
$$ \langle v_1 \rangle \subset \langle v_1,\dots,v_{i-1},v_h,v_{i+1},v_k \rangle  $$
\noindent respectively. 

\begin{prop} \label{famiglia3} The elements $t$ from \ref{item3} have all rank $1$ for $h = 2,\dots,k$ and they represent the flags
$$\langle v_h \rangle \subset \langle v_1,\dots,v_k \rangle. $$ 
If $h = k+1,\dots,n$ then the corresponding element has rank $2$ and it decomposes as
 $$ t = -\frac{1}{2} (v_1-v_h) \wedge v_2 \wedge \dots \wedge v_k \otimes (v_1-v_h) + \frac{1}{2} (v_1+v_h) \wedge v_2 \wedge \dots \wedge v_k \otimes (v_1+v_h).$$
\end{prop}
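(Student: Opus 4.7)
The plan splits into two cases according to the position of $h$.

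\textbf{Case $h \in \{2,\dots,k\}$.} First I would observe that in the second summand $v_h \wedge v_2 \wedge \dots \wedge v_k$ the vector $v_h$ already appears among $v_2,\dots,v_k$, so this wedge product vanishes. Hence
$$t = v_1 \wedge v_2 \wedge \dots \wedge v_k \otimes v_h,$$
and since $v_h \in \langle v_1,\dots,v_k\rangle$, this is a rank $1$ tensor representing the flag $\langle v_h \rangle \subset \langle v_1,\dots,v_k \rangle$.

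\textbf{Case $h \in \{k+1,\dots,n\}$, upper bound.} I would verify the proposed decomposition by direct expansion: each of the two summands
$$\pm\tfrac{1}{2}(v_1 \pm v_h)\wedge v_2 \wedge \dots \wedge v_k \otimes (v_1 \pm v_h)$$
is of the form $w_1 \wedge \dots \wedge w_k \otimes w_1$ with $w_1 = v_1 \pm v_h$ lying in the linear span of $w_1,\dots,w_k$; hence each summand has $(2,1^{k-1})$-rank $1$ and represents the flag $\langle v_1 \pm v_h \rangle \subset \langle v_1 \pm v_h, v_2, \dots, v_k \rangle$. Expanding the two tensor products by multilinearity, the diagonal terms $v_1 \wedge \dots \wedge v_k \otimes v_1$ and $v_h \wedge v_2 \wedge \dots \wedge v_k \otimes v_h$ cancel, while the mixed terms add up to exactly $t$. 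This shows that $t$ has rank at most $2$.

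\textbf{Case $h \in \{k+1,\dots,n\}$, lower bound.} Here I would use the catalecticant map
$$\mathcal{C}^{(2,1^{k-1}),(1^{k})}_t : \mathbb{S}_{(1^k)} V^* \longrightarrow \mathbb{S}_{(2,1^{k-1})/(1^k)} V \cong V,$$
which is the very first map computed by Algorithm \ref{algoritmo2} for the partition $\lambda = (2,1^{k-1})$. Using dual vectors $x_1,\dots,x_n$ to $v_1,\dots,v_n$, apply the map to $x_1 \wedge \dots \wedge x_k$: since $x_i(v_h)=0$ for $i \le k < h$, the second summand of $t$ contracts to $0$ while the first contributes $v_h$. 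Applying it instead to $x_h \wedge x_2 \wedge \dots \wedge x_k$ kills the first summand and yields $v_1$. Since $v_1$ and $v_h$ are linearly independent, $\operatorname{rk}\mathcal{C}^{(2,1^{k-1}),(1^k)}_t \geq 2$, so by Corollary \ref{RanghiConsecutivi2} the $(2,1^{k-1})$-rank of $t$ is at least $2$. Combined with the explicit decomposition, the rank equals $2$.

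The only real obstacle is the lower bound for $h>k$: one needs a pairing that actually separates the two rank-one summands. The catalecticant $\mathcal{C}^{(2,1^{k-1}),(1^k)}_t$ does exactly this because the two flags appearing in the decomposition differ precisely in the $k$-dimensional subspace, so evaluating on the two ``natural'' Plücker dual elements $x_1 \wedge \dots \wedge x_k$ and $x_h \wedge x_2 \wedge \dots \wedge x_k$ detects the difference; everything else reduces to bookkeeping on wedge products.
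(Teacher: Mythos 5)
Your proposal is correct and follows the same overall structure as the paper's proof: the case $h\in\{2,\dots,k\}$ is handled by the vanishing of $v_h\wedge v_2\wedge\dots\wedge v_k$, the upper bound for $h>k$ by direct expansion of the proposed two-term decomposition, and the lower bound by a catalecticant rank computation. The one genuine difference is the choice of catalecticant for the lower bound. The paper uses $\mathcal{C}^{(2,1^{k-1}),(1)}_t$, shows that a $(2,1^{k-1})$-rank $1$ point has catalecticant rank exactly $k$, computes $\ker\mathcal{C}^{(2,1^{k-1}),(1)}_t=\langle x_{k+1},\dots,\hat{x_h},\dots,x_n\rangle$ so that the rank is $k+1>k$, and concludes by subadditivity. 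You instead use $\mathcal{C}^{(2,1^{k-1}),(1^k)}_t$, whose rank on a $\lambda$-rank $1$ point is $1$ (its image is the line spanned by the distinguished vector of the flag), and exhibit $v_1$ and $v_h$ as two independent elements of the image; this is a slightly cleaner certificate (rank $2$ versus rank $1$ rather than $k+1$ versus $k$) and plugs directly into Corollary \ref{RanghiConsecutivi2}, whereas the paper's choice of $\mu=(1)$ has the advantage of setting up the invariants ($\operatorname{rk}\mathcal{C}^{(2,1^{k-1}),(1)}_t$ detecting $\dim(V\cap W)$) that it reuses throughout the rest of Section \ref{sestasez}. Both arguments are valid and complete.
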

\begin{proof} Suppose that $h = 2,\dots,k$. Then $t$ has the form 
$$ v_1 \wedge \dots \wedge v_h \wedge \dots \wedge v_k \otimes v_h$$
and hence it has rank $1$ and it represents the flag
$$\langle v_h \rangle \subset \langle v_1,\dots,v_k \rangle. $$ 
If $h=k+1,\dots,n$, we can compute the kernel
$$ \ker \mathcal{C}_t^{(2,1^{k-1}),(1)} = \langle x_{k+1},\dots,\hat{x_h},\dots,x_n \rangle $$
where $\hat{x_h}$ means that $x_h$ does not appear among the generators. Remark that in general if $t$ has rank $1$, say for instance $t = p$ in \eqref{hwtang}, then the catalecticant map $\mathcal{C}^{(2,1^{k-1}),(1)}_t$ has rank $k$. This implies that if $t = t_1 +\dots + t_r$, where every $t_i$ has rank $1$, we get the inequality

\begin{equation} \operatorname{rk} \mathcal{C}^{(2,1^{k-1}),(1)}_t = \operatorname{rk} \mathcal{C}^{(2,1^{k-1}),(1)}_{t_1 + \dots + t_r} \leq \sum_{i=1}^r \operatorname{rk} \mathcal{C}^{(2,1^{k-1}),(1)}_{t_i} = r \cdot k. \end{equation}

\noindent Since in this case $\operatorname{rk}(\mathcal{C}^{(2,1^{k-1}),(1)}_t) = k+1$, we can already conclude that $t$ has not rank $1$. Hence the decomposition 
$$ t = -\frac{1}{2} (v_1-v_h) \wedge v_2 \wedge \dots \wedge v_k \otimes (v_1-v_h) + \frac{1}{2} (v_1+v_h) \wedge v_2 \wedge \dots \wedge v_k \otimes (v_1+v_h)$$
is minimal and $t$ has rank $2$.
\end{proof}

\begin{remark} \label{primosistema}
The fact that $\operatorname{rk}(\mathcal{C}^{(2,1^{k-1}),(1)}_t) = k+1$ suggests that we can restrict our study to the flag $\mathbb{F}(1,k;\mathbb{C}^{k+1})$ which is an adjoint variety. In this restricted case, in order to find a rank $2$ decomposition of the tensor, we should find at least one product of two distinct linear forms inside $\ker\mathcal{C}_t^{(2,1^{k-1}),(2)}$. Such linear forms are the equations of the two $k$-dimensional linear spaces in $\mathbb{C}^{k+1}$ of the two flags associated to the decomposition. We can see that
$$ (x_1-x_h)(x_1+x_h) \in \ker \mathcal{C}^{(2,1^{k-1}),(2)} $$
are the linear forms we are looking for and the respective $k$-dimensional linear spaces are
$$ v(x_1-x_h) = \langle v_1 + v_h,v_2,\dots,v_k \rangle $$
and
$$ v(x_1+x_h) = \langle v_1 - v_h,v_2,\dots,v_k \rangle. $$
\end{remark} 
\smallskip

\noindent Now we have to study the possible sums of elements from the three different sets. 

\begin{remark} The sum of two elements from \ref{item1} and \ref{item2} gives
\begin{align*} a\cdot v_1 \wedge \dots \wedge v_k \otimes v_1 + &b\cdot v_1 \wedge \dots \wedge v_{i-1} \wedge v_h \wedge v_{i+1} \wedge \dots \wedge v_k \otimes v_1 = \\ &v_1 \wedge \dots \wedge v_{i-1} \wedge (a \cdot v_i + b \cdot v_h) \wedge v_{i+1} \wedge \dots \wedge v_k \otimes v_1
\end{align*}
which has rank $1$. 
\end{remark}

\begin{remark}
The sum of two elements from \ref{item1} and \ref{item3} with $h = 2,\dots,k$ turns out to be
\begin{align*} a\cdot v_1 \wedge \dots \wedge v_k \otimes v_1 + b \cdot v_1 \wedge \dots \wedge v_k \otimes v_h  = (v_1 + v_h) \wedge \dots \wedge v_k \otimes (v_1 + v_h)
\end{align*}
which has rank $1$, while if $h = k+1,\dots,n$ is
\begin{align*} & a\cdot v_1 \wedge \dots \wedge v_k \otimes v_1 +b \cdot ( v_1 \wedge \dots \wedge v_k \otimes v_h + v_h \wedge v_2 \dots \wedge v_k \otimes v_1) = \\ & b \cdot v_1 \wedge \dots \wedge v_k \otimes (\frac{a}{2} \cdot v_1 + b \cdot v_h) + (\frac{a}{2} \cdot v_1 + b \cdot v_h) \wedge v_2 \dots \wedge v_k \otimes v_1
\end{align*}
which is again an element of the set \ref{item3}. 
\end{remark}

\noindent Now we focus on the case \ref{item2} $+$ \ref{item3}.
\begin{prop}
For any $v_j \in \mathbb{C}^n$, the sum of two elements from \ref{item2} and \ref{item3}, i.e. the tensor 
\begin{align} \label{2+3}
t = a \cdot v_1 \wedge \dots \wedge v_{i-1} \wedge &v_j \wedge v_{i+1} \wedge \dots \wedge v_k \otimes v_1 + \\ &+ b \cdot (v_1 \wedge \dots \wedge v_k \otimes v_h + v_h \wedge v_2 \dots \wedge v_k \otimes v_1), \nonumber
\end{align}
has 
\begin{enumerate}[label = (\roman*)]
\item \label{item11} rank $2$ if $v_j$ and $v_h$ are not multiple and $v_h \in \langle v_2,\dots,v_k \rangle$
\item \label{item12} rank $3$ if $v_j$ and $v_h$ are not multiple and $v_h \not\in \langle v_2,\dots,v_k \rangle$
\item \label{item13} rank $2$ if $v_j$ and $v_h$ are multiple.
\end{enumerate}
\end{prop}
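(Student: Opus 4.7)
The argument splits into the three cases (i), (ii), (iii) according to the relative positions of $v_j$, $v_h$, and $\langle v_2,\dots,v_k\rangle$. In each case the upper bound on the $(2,1^{k-1})$-rank of $t$ comes from an explicit decomposition, while the lower bound relies on the catalecticant from Proposition~\ref{famiglia3}.

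For case (i), the hypothesis $v_h\in\langle v_2,\dots,v_k\rangle$ forces $v_h\wedge v_2\wedge\dots\wedge v_k=0$, so the bracketed summand of \eqref{2+3} collapses to $b\cdot v_1\wedge\dots\wedge v_k\otimes v_h$, which by Pl\"ucker-type rewriting represents the rank $1$ flag $\langle v_h\rangle\subset\langle v_1,\dots,v_k\rangle$. Then $t$ is the sum of two rank $1$ tensors whose $1$-dimensional parts $\langle v_1\rangle$ and $\langle v_h\rangle$ are distinct (since $v_h\in\langle v_2,\dots,v_k\rangle$ is not a multiple of $v_1$), yielding $r_X(t)\le 2$. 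The rank cannot equal $1$: a rank $1$ tensor has catalecticant rank exactly $k$ (Proposition~\ref{famiglia3}), while a direct check gives $\operatorname{rk}\mathcal{C}^{(2,1^{k-1}),(1)}_t>k$ for our $t$.

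For case (iii), rescaling $a$ we may assume $v_j=v_h$, and we propose the decomposition
\begin{align*}
t=\ &(v_1+\tfrac{b}{2}v_h)\wedge v_2\wedge\dots\wedge(v_i+\tfrac{a}{2}v_h)\wedge\dots\wedge v_k\otimes(v_1+\tfrac{b}{2}v_h)\\
&-(v_1-\tfrac{b}{2}v_h)\wedge v_2\wedge\dots\wedge(v_i-\tfrac{a}{2}v_h)\wedge\dots\wedge v_k\otimes(v_1-\tfrac{b}{2}v_h).
\end{align*}
Writing $\omega_0=v_1\wedge\dots\wedge v_k$, $\omega_1=v_h\wedge v_2\wedge\dots\wedge v_k$, and $\omega_i=v_1\wedge\dots\wedge v_{i-1}\wedge v_h\wedge v_{i+1}\wedge\dots\wedge v_k$, each wedge expands multilinearly to $\omega_0+\mu\omega_i+\epsilon\omega_1$ with $(\epsilon,\mu)=(\pm b/2,\pm a/2)$ (the $\epsilon\mu$-term vanishes because $v_h\wedge v_h=0$). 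Subtracting the two summands cancels the monomials even in $(\epsilon,\mu)$ and doubles the odd ones, producing exactly $a\omega_i\otimes v_1+b\omega_0\otimes v_h+b\omega_1\otimes v_1=t$. The catalecticant argument again excludes rank $1$.

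For case (ii), $v_j,v_h\in\{v_{k+1},\dots,v_n\}$ are independent basis vectors. The upper bound $r_X(t)\le 3$ is immediate: Proposition~\ref{famiglia3} decomposes the bracketed summand into $P_\pm=\pm\tfrac{b}{2}(v_1\pm v_h)\wedge v_2\wedge\dots\wedge v_k\otimes(v_1\pm v_h)$, so $t$ is a sum of three rank $1$ tensors. For the lower bound, suppose $t=Q_1+Q_2$ with $Q_m=\alpha_m\wedge\beta_{m,2}\wedge\dots\wedge\beta_{m,k}\otimes\alpha_m$. The image of $\mathcal{C}=\mathcal{C}^{(2,1^{k-1}),(1^{k-1})}_t\colon\superwedge^{k-1}V^*\to V\otimes V$ lies in $V\otimes\langle\alpha_1,\alpha_2\rangle$, and computing $\mathcal{C}$ on $t=t_1+P_++P_-$ reveals second tensor factors spanning $\langle v_1,v_1+v_h,v_1-v_h\rangle=\langle v_1,v_h\rangle$, hence $\langle\alpha_1,\alpha_2\rangle=\langle v_1,v_h\rangle$. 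Choosing $f\in V^*$ vanishing on $\alpha_2$ but not on $\alpha_1$, the projected image $(1\otimes f)\circ\mathcal{C}$ must lie in $f(\alpha_1)\cdot W_{Q_1}$ of dimension at most $k$. A direct calculation identifies this image as
$$f(v_1)\,\langle v_1,\dots,\widehat{v_i},\dots,v_k,v_j\rangle+f(v_1+v_h)\,\langle v_1+v_h,v_2,\dots,v_k\rangle+f(v_1-v_h)\,\langle v_1-v_h,v_2,\dots,v_k\rangle,$$
and case analysis over the allowed $(f(v_1),f(v_h))$ shows its dimension exceeds $k$ unless $\{\alpha_1,\alpha_2\}=\{v_1,v_h\}$ up to scalar. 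In that last configuration $W_{Q_1}$ and $W_{Q_2}$ are forced to be $\langle v_1,v_2,\dots,\widehat{v_i},\dots,v_k,v_j\rangle$ and $\langle v_h,v_2,\dots,v_k\rangle$ respectively, and a direct coefficient comparison of $Q_1+Q_2$ against $t$ in the sstd basis of $\mathbb{S}_{(2,1^{k-1})}V$ forces $b=0$, contradicting $b\ne 0$. The principal technical obstacle is this final case analysis, which must carefully handle the various alignments of $\alpha_1,\alpha_2$ within the plane $\langle v_1,v_h\rangle$ and the corresponding choices of $f$.
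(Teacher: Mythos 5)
Your cases (i) and (iii) are correct and essentially follow the paper's strategy: in both you exclude rank $1$ by comparing $\operatorname{rk}\mathcal{C}^{(2,1^{k-1}),(1)}_t=k+1$ against the value $k$ attained on rank-$1$ points, and you exhibit an explicit length-$2$ decomposition. Your case (iii) decomposition with the $\pm b/2$, $\pm a/2$ scalings does expand to $a\,\omega_i\otimes v_1+b\,\omega_0\otimes v_h+b\,\omega_1\otimes v_1$, and is in fact more carefully normalized than the formula printed in the paper.

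The genuine gap is in case (ii), in the lower bound $r_X(t)\ge 3$. Your argument hinges on the assertion that the image of $(1\otimes f)\circ\mathcal{C}^{(2,1^{k-1}),(1^{k-1})}_t$ \emph{equals} the sum $f(v_1)\langle v_1,\dots,\widehat{v_i},\dots,v_k,v_j\rangle+f(v_1+v_h)\langle v_1+v_h,v_2,\dots,v_k\rangle+f(v_1-v_h)\langle v_1-v_h,v_2,\dots,v_k\rangle$. But the image of a sum of linear maps is only \emph{contained} in the sum of the images, and cancellations are possible; equality is exactly what would need proof. Moreover, if that equality held for every admissible $f$, then already $f=x_1$ would yield a space of dimension $k+2>k$ and no configuration would survive, contradicting your own subsequent need for a surviving case and a coefficient comparison forcing $b=0$ --- so the sketch is internally inconsistent, and the ``final case analysis'' that you explicitly defer is precisely where the content lies. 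The paper sidesteps all of this with a one-line subadditivity argument on a different catalecticant: on a rank-$1$ point one checks $\operatorname{rk}\mathcal{C}^{(2,1^{k-1}),(2)}=k$, hence any $t$ of rank $\le 2$ satisfies $\operatorname{rk}\mathcal{C}^{(2,1^{k-1}),(2)}_t\le 2k$, while a direct kernel computation for the tensor in question gives $\operatorname{rk}\mathcal{C}^{(2,1^{k-1}),(2)}_t=2k+1$; together with the explicit length-$3$ decomposition from Proposition \ref{famiglia3} this settles the case. I recommend replacing your structural analysis of hypothetical decompositions $t=Q_1+Q_2$ by this computation.
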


\begin{proof}
Assume at first that $v_j$ and $v_h$ are not multiple each other. If $v_h \in \langle v_2,\dots,v_k \rangle$, the sum \eqref{2+3} reduces to
\begin{equation} \label{2+3.1}a \cdot v_1 \wedge \dots \wedge v_{i-1} \wedge v_j \wedge v_{i+1} \wedge \dots \wedge v_k \otimes v_1 + b \cdot v_1 \wedge \dots \wedge v_k \otimes v_h. \end{equation}
We claim that this element has rank $2$. Indeed if the tensor in \eqref{2+3.1} has rank $1$, then the catalecticant map 
$$\mathcal{C}_t^{(2,1^{k-1}),(1)} : \mathbb{S}_{(1)} (\mathbb{C}^n)^* \rightarrow \mathbb{S}_{(2,1^{k-1})/(1)} \mathbb{C}^n$$
 has rank $k$ as already discussed in the proof of Proposition \ref{famiglia3}. Since the catalecticant map in this case has rank $k+1$, we can already conclude that the decomposition in \eqref{2+3.1} is minimal. This proves \ref{item11}.

\noindent Assume now that $v_h \not \in \langle v_2,\dots,v_k \rangle$. In this case we use the catalecticant map 
$$\mathcal{C}_t^{(2,1^{k-1}),(2)} : \mathbb{S}_{(2)} (\mathbb{C}^n)^* \rightarrow \mathbb{S}_{(2,1^{k-1})/(2)} \mathbb{C}^n$$
to compute the rank of the element. At first remark that if $t$ is an element of rank $1$, then the rank of this catalecticant map is $k$. Indeed for instance if $t = p$, then the only elements of $\mathbb{S}_{(2)} (\mathbb{C}^n)^*$ which does not kill $p$ and whose images via the catalecticant map are linearly independent are
$$ x_1^2, x_1x_2, \dots,x_1x_k$$
which are exactly $k$. This implies that if $t = t_1 + \dots + t_r$ has rank $r$, then
\begin{equation} \label{rankbound} \operatorname{rk} \mathcal{C}^{(2,1^{k-1}),(2)}_t = \operatorname{rk} \mathcal{C}^{(2,1^{k-1}),(2)}_{t_1 + \dots + t_r} \leq \sum_{i=1}^r \operatorname{rk} \mathcal{C}^{(2,1^{k-1}),(2)}_{t_i} = r \cdot k. \end{equation}
In the instance of a tensor $t$ like \eqref{2+3} one gets that the kernel of $\mathcal{C}^{(2,1^{k-1}),(2)}_t$ is the subspace
$$\ker \mathcal{C}^{(2,1^{k-1}),(2)}_t = \langle x_mx_n,\ \text{where either}\ (m,n)=(h,h)\ \text{or}\ m,n \neq 1,h \rangle $$
i.e. the elements of $\mathbb{S}_{(2)} (\mathbb{C}^n)^*$ not killing $t$ are all the ones in the span
$$ \langle x_1x_h,\dots,x_kx_h,x_1x_2,\dots,x_1x_k,x_1x_j,x_1^2\rangle. $$
This means that $\operatorname{rk} \mathcal{C}^{(2,1^{k-1}),(2)}_t = 2k+1$. This implies by \eqref{rankbound} that $t$ has rank at least $3$. However, by Proposition \ref{famiglia3}, the element $t$ is written as a sum of $3$ rank $1$ elements and hence its rank is $3$.

\noindent Finally assume that $v_j$ and $v_h$ are multiples. The element in \eqref{2+3} reduces to 
\begin{align*} 
t = a \cdot v_1 \wedge \dots \wedge v_{i-1} \wedge &v_h \wedge v_{i+1} \wedge \dots \wedge v_k \otimes v_1 + \\ &+ b \cdot (v_1 \wedge \dots \wedge v_k \otimes v_h + v_h \wedge v_2 \dots \wedge v_k \otimes v_1). \nonumber
\end{align*}
We obtain again that $\operatorname{rk} \mathcal{C}^{(2,1^{k-1}),(1)}_t = k+1$ and hence $t$ has not rank $1$. One can see that $t$ can be written as
\begin{align*} 
t = &(v_1-v_h) \wedge \dots \wedge v_{i-1} \wedge (v_i-v_h) \wedge v_{i+1} \wedge \dots \wedge v_k \otimes (v_1-v_h) + \\ &+ (v_1+v_h) \wedge \dots \wedge v_{i-1} \wedge (v_i+v_h) \wedge v_{i+1} \wedge \dots \wedge v_k \otimes (v_1+v_h)
\end{align*}
and hence $t$ has rank $2$. Note that up to change of coordinates this is the tensor described in Remark \ref{primosistema}. This concludes the proof.
\end{proof}

%\vskip-5cm
\noindent We collect the elements we found in a table.

\begin{table}[ht]\begin{center}
\begin{tabular}{| c | c | c | c |}
\hline
$(2,1^{k-1})$-rank & $\operatorname{rk} \mathcal{C}_t^{(2,1^{k-1}),(1)}$ & $\operatorname{rk} \mathcal{C}_t^{(2,1^{k-1}),(2)}$ & Notes \\ \hline
1 & $k$ & $k$ & sets $(1)$, $(2)$ and $(3)$ \\ \hline
2 & $k+1$ & $2k-1$ & set $(3)$ \\ \hline
3 & $k+2$ & $2k+1$ & $(2) + (3)$ \\ \hline
\end{tabular}
\caption{The ranks appearing on the tangential variety to $\mathbb{F}$.}  \label{tabella1}\end{center}
\end{table}

\noindent Let us study now the elements of $\mathbb{S}_{(2,1^{k-1})} V$ lying on a secant line to $\mathbb{F}$. Such elements can be written as
$$  v_1 \wedge \dots \wedge v_k \otimes v_1 + w_1 \wedge \dots \wedge w_k \otimes w_1 $$
so that their rank is at most $2$. Remark that letting the group $SL(n)$ act on this element, the rank and the numbers $\dim \langle v_1,\dots,v_k \rangle \cap \langle w_1,\dots,w_k \rangle$ and $\dim \langle v_1,\dots,v_k \rangle + \langle w_1,\dots,w_k \rangle$ are preserved. In particular we may pick as a representative of the orbit of the element in the previous formula 

\begin{equation} \label{rango2} t=v_1 \wedge \dots \wedge v_h \wedge v_{h+1} \wedge \dots \wedge v_k \otimes v_i + v_1 \wedge \dots \wedge v_h \wedge v_{k+1} \wedge \dots \wedge v_{2k-h} \otimes v_j \end{equation}
in which the intersection of the $k$-dimensional spaces of the flags is explicit, i.e.

$$\langle v_1,\dots,v_k \rangle \cap \langle v_1,\dots,v_h,v_{k+1},\dots,v_{2k-h} \rangle = \langle v_1,\dots,v_h \rangle .$$ 
The vectors $v_i$ and $v_j$ appearing after the tensor products in the first and second addend are one of the generators of the spaces $\langle v_1,\dots,v_k \rangle$ and $\langle v_1, \dots,v_h,v_{k+1},\dots,v_{2k-h}\rangle$ respectively. Note that if both $v_i$ and $v_j$ belong to the intersection of the $k$-dimensional subspaces of the flags, then they can be the same vector up to scalar multiplication. Hence we may distinguish the elements on secant lines to $\mathbb{F}$ using only two invariants: the dimension of the intersection of the $k$-dimensional subspaces of the two flags and whether the equality $\langle v_i \rangle = \langle v_j \rangle$ holds or not. In terms of Schur apolarity action, we can use the catalecticant maps to determine which orbit we are studying. Specifically the map

$$ \mathcal{C}_t^{(2,1^{k-1}),(1)} : \mathbb{S}_{(1)} (\mathbb{C}^n)^* \longrightarrow \mathbb{S}_{(2,1^{k-1})/(1)} \mathbb{C}^n $$
will give us information about the dimension of the intersection. Indeed consider a rank $2$ element $t$ as in \eqref{rango2} and denote with $\{x_i,\ i=1,\dots,n\}$ the dual basis of the $\{v_i,\ i=1,\dots,n\}$. Then the image of $x_i$ can be either $0$ if $x_i = x_{2k-h+1},\dots,x_n$, or non zero if $x_i = x_1,\dots,x_{2k-h}$. It is easy to see that in this latter case all the images that we get are linearly independent as elements of $\mathbb{S}_{(2,1^{k-1})/(1)}\mathbb{C}^n$. Hence the rank of the catalecticant map is equal to the dimension of the sum of the two $k$-dimensional subspaces of the two flags involved. Once that this number is fixed, the rank of the catalecticant
$$ \mathcal{C}_t^{(2,1^{k-1}),(2)} : \mathbb{S}_{(2)} (\mathbb{C}^n)^* \longrightarrow \mathbb{S}_{(2,1^{k-1})/(2)} \mathbb{C}^n $$
will help us on discriminating whether $\langle v_i \rangle = \langle v_j \rangle$ holds or not. Indeed by the definition of the Schur apolarity action, the element $x_p x_q \in \mathbb{S}_{(2)}(\mathbb{C}^n)^*$, which can be written as $x_p \otimes x_q + x_q \otimes x_p$, is applied in both the factors of the tensor product $\superwedge^k \mathbb{C}^n \otimes \superwedge^1 \mathbb{C}^n$ in which $\mathbb{S}_{(2,1^{k-1})}\mathbb{C}^n$ is contained. Hence the fact that $\langle v_i \rangle = \langle v_j \rangle$ is true or not will change the rank of this catalecticant map.

\noindent We give in the following table a classification of all the possible orbits depending on the invariants we have mentioned. Let us denote briefly $\dim \langle v_1,\dots,v_k \rangle \cap \langle w_1,\dots,w_k \rangle$ just with $\dim V \cap W$.

\begin{table}[ht] \begin{center}
\begin{tabular}{| c | c | c | c | c |}
\hline
$\dim V \cap W$ & $\langle v_1 \rangle = \langle w_1 \rangle$ & $(2,1^{k-1})$-rank & $\operatorname{rk} \mathcal{C}_t^{(2,1^{k-1}),(1)}$ & $\operatorname{rk} \mathcal{C}_t^{(2,1^{k-1}),(2)}$ \\ \hline
$k$ & True/False & 1 & $k$ & $k$ \\ \hline
$k-1$ & True & 1 & $k$ & $k$ \\ \hline
$k-1$ & False & 2 & $k+1$ & $2k-1$ \\ \hline
$\vdots$ & & & & \\ \hline
$h$ & False & 2 & $2k-h$ & $2k$ \\ \hline
$h$ & True & 2 & $2k-h$ & $2k-h$ \\ \hline
$\vdots$ & & & & \\ \hline
$0$ & False & 2 & $2k$ & $2k$ \\ \hline
\end{tabular}
\caption{Orbits of points on a secant line to $\mathbb{F}$, where $h = 0,\dots,k$.} \end{center}
\end{table}
\vskip-0.5cm

\noindent The results obtained so far can be collected in the following algorithm.

\begin{algorithm} The following algorithm distinguishes the rank of the elements of border rank at most $2$.
\vskip0.5cm

\noindent {\bf Input}: An element $t \in \widehat{\sigma_2(\mathbb{F}(1,k;\mathbb{C}^n))} \subset \mathbb{S}_{(2,1^{k-1})} \mathbb{C}^n$.

\noindent {\bf Output}: If the border rank of $t$ is less or equal to $2$, it returns the rank of $t$.

\begin{enumerate}[nosep]
\item[1:] compute $(r_1,r_2,r_3) = \left (\operatorname{rk} \mathcal{C}_t^{(2,1^{k-1}),(1^k)},\operatorname{rk} \mathcal{C}_t^{(2,1^{k-1}),(1)},\operatorname{rk} \mathcal{C}_t^{(2,1^{k-1}),(2)} \right)$
\item[2:] {\bf if} $r_1 = 1$ {\bf then}
\item[3:] $\quad$ $t$ has both border rank and rank equal to $1$, {\bf exit};
\item[4:] {\bf else if} $r_1 \geq 3$ {\bf then}
\item[5:] $\quad$ $t$ has border rank at least $3$, {\bf exit};
\item[6:] {\bf else if} $r_1 = 2$ {\bf then}
\item[7:] $\quad$ $t$ has border rank $2$ and
\item[8:] $\quad$ {\bf if} $(r_1,r_2,r_3) = (2,k+2,2k+1)$ {\bf then}
\item[9:] $\quad$ $t$ has rank $3$ and it is the element given by \ref{item2}$+$\ref{item3} in Table \ref{tabella1};
\item[10:] $\quad$ {\bf else if} $(r_1,r_2,r_3) = (2,2k-h,2k)$ {\bf then}
\item[11:] $\quad$ $t$ has rank $2$ and it is in the orbit with $\dim V \cap W = h$ and $\langle v_i \rangle \neq \langle v_j \rangle$,
\item[12:] $\quad$ {\bf else if} $(r_1,r_2,r_3) = (2,2k-h,2k-h)$ {\bf then}
\item[13:] $\quad$ $t$ has rank $2$ and it is in the orbit with $\dim V \cap W = h$ and $\langle v_i \rangle = \langle v_j \rangle$
\item[14:] $\quad$ {\bf end if}
\item[15:] {\bf end if}
\item[16:] {\bf end}
\end{enumerate}
\end{algorithm}

\begin{remark}
Note that this is not a complete classification of the orbits appearing on $\sigma_2(X)$ by the action of $SL(V)$. For this purpose one has to make a more specific distinction of the orbits related to secant lines to $\mathbb{F}$. In particular one has to discriminate whether the lines $\langle v_i \rangle$ and $\langle v_j \rangle$ both belong to the intersection of the two $k$-dimensional subspaces, only one of them belongs to this intersection and eventually none of them belong to it.
\end{remark}

\noindent As a conclusion of the discussion of this section, we obtain the following result. For a given non degenerate irreducible projective variety $X \subset \mathbb{P}^N$, for $s \geq r$ we use the notation

$$ \sigma_{r,s} (X) := \{ p \in \sigma_r(X): r_X(p)=s \}. $$
\smallskip

\begin{cor}
Let $\mathbb{F} = \mathbb{F}(1,k;n) $ embedded with $\mathcal{O}(1,1)$ in $\mathbb{P}(\mathbb{S}_{(2,1^{k-1})} \mathbb{C}^n)$. Then we have

$$ \sigma_2(\mathbb{F}) \setminus \mathbb{F} = \sigma_{2,2} (\mathbb{F}) \cup \sigma_{2,3} (\mathbb{F}). $$
\end{cor}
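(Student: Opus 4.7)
The plan is to use the classical decomposition
$$\sigma_2(\mathbb{F}) \;=\; \overline{\bigcup_{p_1,p_2\in\mathbb{F},\,p_1\neq p_2} \langle p_1,p_2\rangle} \;=\; \Bigl(\bigcup_{p_1\neq p_2\in\mathbb{F}} \langle p_1,p_2\rangle\Bigr)\,\cup\,\tau(\mathbb{F}),$$
so that every point $t\in\sigma_2(\mathbb{F})\setminus\mathbb{F}$ is either a genuine secant point or a tangent vector. For a genuine secant point $t=p_1+p_2$ with $p_1,p_2\in\mathbb{F}$ and $t\notin\mathbb{F}$, the $(2,1^{k-1})$-rank of $t$ is at most $2$ by definition, and it is strictly greater than $1$ since $t\notin\mathbb{F}$; hence $r_{\mathbb{F}}(t)=2$, i.e.\ $t\in\sigma_{2,2}(\mathbb{F})$.

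It remains to handle $t\in\tau(\mathbb{F})\setminus\mathbb{F}$, and here I would exploit the homogeneity of $\mathbb{F}$ under the $SL(n)$-action: every tangent space is the $SL(n)$-translate of $T_p\mathbb{F}$ at the highest weight vector $p=v_1\wedge\cdots\wedge v_k\otimes v_1$, and the $(2,1^{k-1})$-rank is $SL(n)$-invariant. So it suffices to show that every element of $T_p\mathbb{F}$ has rank at most $3$. First I would invoke the explicit generators $(1),(2),(3)$ of $\widehat{T_p\mathbb{F}}$ worked out earlier in this section and write an arbitrary element as a linear combination of these three types.

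Next I would walk through the case analysis already carried out just above the statement of the corollary: elements of types $(1)$ and $(2)$, and of type $(3)$ with $h\in\{2,\dots,k\}$, are rank $1$; type $(3)$ with $h\in\{k+1,\dots,n\}$ is rank $2$ by the explicit decomposition
$$t \;=\; -\tfrac{1}{2}(v_1-v_h)\wedge v_2\wedge\cdots\wedge v_k\otimes(v_1-v_h) + \tfrac{1}{2}(v_1+v_h)\wedge v_2\wedge\cdots\wedge v_k\otimes(v_1+v_h);$$
sums $(1)+(2)$, $(1)+(3)$, $(2)+(3)$ were already computed to have rank at most $3$. What remains is to combine all three types simultaneously, but by absorbing the $(1)$-summand into either the $(2)$- or the $(3)$-summand via the identities used in the two-term cases, any linear combination from $T_p\mathbb{F}$ reduces to a sum of at most three rank-$1$ tensors. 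Thus $r_{\mathbb{F}}(t)\le 3$ for every $t\in T_p\mathbb{F}$, proving $\tau(\mathbb{F})\setminus\mathbb{F}\subset \sigma_{2,2}(\mathbb{F})\cup\sigma_{2,3}(\mathbb{F})$.

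The main technical point I expect to be the obstacle is checking that the combined linear combination of all three families does not produce a rank larger than $3$; this amounts to verifying that the $(1)$-component can always be absorbed, which is straightforward because $v_1\wedge\cdots\wedge v_k\otimes v_1$ is a rescaling of the type-$(3)$ vector in the direction $v_1$ and a boundary case of type-$(2)$ with $v_h=v_i$. Once this absorption is checked, the two inclusions $\sigma_{2,2}(\mathbb{F})\subset\sigma_2(\mathbb{F})\setminus\mathbb{F}$ and $\sigma_{2,3}(\mathbb{F})\subset\sigma_2(\mathbb{F})\setminus\mathbb{F}$ are immediate from the definition, giving the equality claimed.
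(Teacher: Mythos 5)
Your overall strategy is the same as the one the paper follows: write $\sigma_2(\mathbb{F})\setminus\mathbb{F}$ as the union of points on honest secant lines and points of the tangential variety, dispose of the secant points directly (your argument there is fine, and in fact cleaner than the paper's catalecticant table), and then reduce by homogeneity to the single tangent space $T_p\mathbb{F}$ at the highest weight vector and run the case analysis on the three families of generators.

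The genuine weak point is the step you yourself flag: the claim that an arbitrary element of $\widehat{T_p\mathbb{F}}$ reduces to a sum of at most three rank-one tensors ``by absorbing the $(1)$-summand.'' Absorbing the type-$(1)$ summand is indeed easy, and all the type-$(3)$ generators (together with the $i=1$ replacement direction) do combine into a single type-$(3)$ element with a general vector $u$ in place of $v_h$. The real obstruction is elsewhere: a general tangent vector has the form
\begin{equation*}
t \;=\; \sum_{i=1}^{k} v_1\wedge\dots\wedge w_i\wedge\dots\wedge v_k\otimes v_1 \;+\; v_1\wedge\dots\wedge v_k\otimes w_1,
\end{equation*}
and the contributions from the rows $i=2,\dots,k$ with linearly independent $w_i$ do \emph{not} collapse to a single type-$(2)$ generator; they sum to $v_1\wedge\xi\otimes v_1$ where $\xi=\sum_{i\ge 2} v_2\wedge\dots\wedge w_i\wedge\dots\wedge v_k$ is a tangent vector to a Grassmannian $\mathbb{G}(k-1,\cdot)$. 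For $k\ge 3$ this $\xi$ is not decomposable (for $k\ge 4$ it can even have skew-symmetric rank $3$), so $v_1\wedge\xi\otimes v_1$ alone already has $\lambda$-rank $\ge 2$, and your grouping only yields the bound $2+\operatorname{rk}$ of that piece, i.e.\ $4$ or $5$, not $3$. The two-term case analysis in the section (types $(1)+(2)$, $(1)+(3)$, $(2)+(3)$) never treats a sum of type-$(2)$ generators with distinct row indices $i$, so it cannot be invoked here. To be fair, the paper's own Table of ``ranks appearing on the tangential variety'' has exactly the same limitation and the corollary is stated without further argument, so for $k=2$ (where there is only one row index $i=2$ and your reduction genuinely works) your proof is complete and matches the paper; but for $k\ge 3$ the inclusion $\tau(\mathbb{F})\setminus\mathbb{F}\subset\sigma_{2,2}\cup\sigma_{2,3}$ requires an additional argument (for instance a direct bound on the rank of $v_1\wedge\xi\otimes v_1$ plus a type-$(3)$ term, or a lower-bound check via the catalecticant $\mathcal{C}^{(2,1^{k-1}),(2)}$) that neither you nor the paper supplies.
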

\bigskip

\section*{Acknowledgements}
I thank Giorgio Ottaviani for the help and useful comments, and Alessandra Bernardi for suggesting me the topic and for the support. I would like to thanks also Jan Draisma and the referees for their useful comments and remarks. The author is partly supported by GNSAGA of INDAM. \bigskip

\noindent Contact: reynaldo.staffolani@unitn.it, Ph.D. student at University of Trento.

\bibliographystyle{abbrv}
\bibliography{Schur_apolarity}

\end{document}